%% file: main.tex
\documentclass[a4paper,12pt,leqno]{amsart}
\usepackage[colorlinks=true,linkcolor=darkblue,citecolor=darkblue]{hyperref}
\input{Preamble}
\definecolor{darkblue}{rgb}{0.0, 0.0, 0.55}
\usepackage{verbatim}

\begin{document}

\title[]{Balanced groups and the virtually cyclic dimension of poly-surfaces groups}

\author{Jesús Hernández Hernández}
\address{Centro de Ciencias Matemáticas, Universidad Nacional Autónoma de México. Morelia, Michoacán, México 58089}
\email{jhdez@matmor.unam.mx}

\address{Instituto de Matemáticas, Universidad Nacional Autónoma de México. Oaxaca de Juárez, Oaxaca, México 68000}

\email{porfirio.leon@im.unam.mx}

\author[Porfirio L. León Álvarez ]{Porfirio L. León Álvarez}


\subjclass[2020]{Primary: 20F65, 55R35, 57M07}

\date{\today}


\keywords{Balanced groups, poly-surface groups, poly-free groups, virtually cyclic dimension, Bass--Serre theory, $3$-manifold groups}

\begin{abstract}


In this paper we obtain explicit linear upper bounds for the virtually cyclic dimension of normally poly-surface and normally poly-free groups. Our approach is based on a structural study of the balanced property (L\"uck’s Condition~C), which provides structural control over commensurators of virtually cyclic subgroups.

We prove general stability results showing that the balanced property is preserved under suitable short exact sequences, direct limits, and acylindrical graph of groups decompositions. As applications, we establish that normally poly-hyperbolic groups, normally poly-free groups, and normally poly-surface groups are balanced. These classes include, in particular, pure braid groups of surfaces with non-empty boundary, Artin groups of FC-type, right-angled Artin groups, and fundamental groups of mapping tori of surface homeomorphisms.



\end{abstract}

\maketitle
\setcounter{tocdepth}{1}
\tableofcontents

\section{Introduction}

Let $G$ be a discrete group and let $\mathcal{F}$ be a collection of subgroups of $G$. We say that $\mathcal{F}$ is a \emph{family of subgroups} if it is nonempty and closed under taking subgroups and under conjugation in $G$. A \emph{model for the classifying space} $E_{\mathcal{F}}G$ is a $G$-CW-complex $X$ such that all its isotropy groups belong to $\mathcal{F}$ and it satisfies the following universal property: for any $G$-CW-complex $Y$ whose isotropy groups lie in $\mathcal{F}$, there exists a $G$-map $Y \to X$ that is unique up to $G$-homotopy. It is a standard result that such a model always exists. For more details on classifying spaces for families of subgroups  we refer the reader to \cite{Lu05}.

The \emph{virtually cyclic dimension} of a discrete group $G$, denoted by $\gdvc(G)$, is defined as the minimal integer $n$ for which there exists an $n$-dimensional model for the classifying space $E_{\vcyc}G$, where $\vcyc$ denotes the family of virtually cyclic subgroups of $G$.  These spaces are of particular importance since they appear in the formulation
of the Farrell--Jones conjecture (see \cite{LR05}), which has attracted considerable
attention in geometric group theory and motivated explicit computations of the
virtually cyclic dimension for several prominent classes of groups, such as
mapping class groups (see \cite{Nucinkis:Petrosyan} \cite{JPT16}), 3-manifold groups (see \cite{MR4303333}), virtually poly-$\Z$ groups (see \cite[Theorem 5.13]{LW12}), and classical Artin braid groups (see \cite{Ramon:Juan}). In this paper we give linear upper bounds for the virtually cyclic dimension of normally poly-surfaces and normally poly-free groups.

Now, let $\mathcal{P}$ be a class of groups. We say that $G$ is \emph{poly-$\mathcal{P}$} if there exists a filtration $$1 = G_0 \lhd G_1 \lhd \cdots \lhd G_{n-1} \lhd G_n = G,$$ such that each quotient $G_{i+1}/G_i$ belongs to $\mathcal{P}$. If additionally each $G_i$ is also normal in $G$, we say that $G$ is \emph{normally poly-$\mathcal{P}$}. The minimal $n$ for which such a filtration exists is called the \emph{length} of $G$.

In particular, if $\mathcal{P}$ is either the class of free groups or the class of fundamental groups of connected surfaces (possibly non-orientable, possibly of infinite type) with non-positive Euler characteristic and compact (if non-empty) boundary, then we say that $G$ is either normally poly-free or normally poly-surface, respectively. Typical examples of normally poly-free groups include pure braid groups of surfaces with non-empty boundary (see \cite{MR1797585} \cite[Theorem 1.5]{arXiv250610706L}), even Artin groups of FC-type (see \cite[Theorem 3.18]{MR3900773}), (see \cite[Theorem A]{MR4360030}), and right-angled Artin groups. Note that it has been proved that normally poly-free groups satisfy the Farrell--Jones conjecture (see \cite[Theorem A]{MR4246787}), see also \cite{MR1797585},  \cite[Theorem 1.1]{MR4565703}, \cite[Theorem 2.3.7]{JPSS}

In this work, we obtain explicit upper bounds for the virtually cyclic dimension of these types of groups.

\begin{theorem}\label{poli:surface:th1}
Let $G$ be a normally poly-surface group of length $n$, then $G$ is balanced and its virtually cyclic dimension satisfies $\gdvc(G)\leq 2n.$ 
\end{theorem}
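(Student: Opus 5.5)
We argue by induction on the length $n$, with two goals: showing $G$ is balanced, and bounding $\gdvc(G)$ via the short exact sequence $1\to G_{n-1}\to G\to Q\to 1$. Here $Q=G/G_{n-1}$ is a surface group and $G_{n-1}$ is normal in $G$ and normally poly-surface of length $n-1$. Recall that $G$ is balanced (Lück's Condition C) if, whenever $g\in G$ has infinite order and $k^{-1}g^mk=g^{l}$ with $m,l\neq 0$, we must have $|m|=|l|$. The proof combines a stability result for balancedness under extensions with a Lück--Weiermann type pushout estimate for $\gdvc$. Throughout, $\gdfin$ denotes proper geometric dimension, $\gdvc$ virtually cyclic dimension, and $\vcyc$ the family of virtually cyclic subgroups.

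\textbf{Base case ($n=1$).} Here $G$ is a surface group with $\chi\le 0$.
\begin{itemize}
\item Balancedness: if $\chi<0$ the group is free or hyperbolic, and these are balanced. The torus and Klein bottle groups are virtually $\Z^2$ and also balanced. For infinite type surfaces $G$ is free, hence balanced; alternatively one can use the direct-limit stability result for balancedness.
\item Dimension: $\gdvc(G)\le 2$. For free groups and hyperbolic surface groups this follows from Juan-Pineda--Leary type results, or from Lück's bound $\gdvc\le\gdfin+1$ for hyperbolic groups. For the torus and Klein bottle groups it follows from the virtually poly-$\Z$ computation \cite[Theorem 5.13]{LW12}. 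Free groups of infinite rank (countable) have $\gdvc\le 2$ as well.
\end{itemize}

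\textbf{Balancedness in the inductive step.} Apply the short-exact-sequence stability theorem for balanced groups. Suppose $k^{-1}g^mk=g^l$.
\begin{itemize}
\item If $g$ has infinite-order image in $Q$, the relation descends to $Q$. Balancedness of $Q$ gives $|m|=|l|$.
\item If the image of $g$ is trivial (surface groups are torsion-free), then $g\in G_{n-1}$. Conjugation by $k$ is then an automorphism of the normal subgroup $G_{n-1}$, so we need a version of balancedness of $G_{n-1}$ that is invariant under automorphisms of $G_{n-1}$ induced from $G$.
\end{itemize}
This is where normality of every $G_i$ in $G$ is used. We argue inductively down the filtration, projecting to $G_{i}/G_{i-1}$ at the first level where $g$ is nontrivial. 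There $k$ acts by an automorphism of a surface group $S$. The key fact needed is that no infinite-order element of a surface group satisfies $\phi(h^m)=h^l$ with $|m|\ne|l|$ for an automorphism $\phi$. This holds because translation length (for hyperbolic $S$, in the Cayley graph or via word length growth) is preserved by automorphisms up to multiplicative bounds. For torus and Klein bottle groups we use $\mathrm{GL}_2(\Z)$ eigenvalue considerations. I expect this automorphism-invariant balancedness to be the main obstacle, since it is exactly what makes the hypothesis ``normally'' necessary.

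\textbf{Dimension bound in the inductive step.} Use the family $\mathcal{H}=\{H\le G: \pi(H)\in\vcyc(Q)\}$. A Lück--Weiermann/Degrijse--Petrosyan pushout argument gives
\[\gdvc(G)\le \max\{\gd_{\mathcal{H}}(G),\ \sup_{H\in\mathcal{H}}\gdvc(H)\}+\epsilon.\]
Pulling back a model of $E_{\vcyc}Q$ gives $\gd_{\mathcal{H}}(G)\le\gdvc(Q)\le 2$. Each $H\in\mathcal{H}$ is an extension of $G_{n-1}$ by a virtually cyclic group, i.e.\ $\Z$ or trivial since $Q$ is torsion-free.
\begin{itemize}
\item If $H=G_{n-1}$, induction gives $\gdvc(H)\le 2n-2$.
\item If $H=G_{n-1}\rtimes\Z$, we need $\gdvc(H)\le 2n-1$, which we would obtain from a mapping-torus estimate.
\end{itemize}
Balancedness is essential at this step. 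For balanced groups, commensurators of infinite virtually cyclic subgroups coincide with normalizers of finite-index subgroups (Lück's Condition C). This makes the Lück--Weiermann construction applicable, and it is also what yields the mapping-torus estimate $\gdvc(G_{n-1}\rtimes\Z)\le\gdvc(G_{n-1})+1$.

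Combining these via the standard estimate $\gdvc(G)\le \gd_{\mathcal H}(G)+\sup_H\gdvc(H)$ when $\mathcal H$ is built by extension, we get
\[\gdvc(G)\le 2+(2n-2)=2n.\]
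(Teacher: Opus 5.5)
\textbf{There is a genuine gap in the dimension half, and it cannot be repaired: it already fails in your base case.} You claim $\gdvc(\Z^2)\le 2$ (and the same for the Klein bottle group), and this is false.

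\begin{itemize}
\item Since $\chi(T^2)=0$, the group $\Z^2$ is a normally poly-surface group of length $1$, yet $\gdvc(\Z^2)=3$. The reference you cite, \cite[Theorem 5.13]{LW12}, gives the bound $h+1=3$, and that bound is attained by $\Z^n$ for $n\ge 2$; the paper itself says so after \cref{poli:free:virtually:cy}.
\item Restricting a model to the index-two subgroup $\Z^2$ shows the Klein bottle group $\langle a,t\mid tat^{-1}=a^{-1}\rangle$ also has $\gdvc\ge 3$.
\item More generally, $\Z^{2n}$ is normally poly-surface of length $n$, and not less, since surface groups have cohomological dimension at most $2$. But $\gdvc(\Z^{2n})=2n+1$.
\end{itemize}

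So the bound $2n$ is false for the class as defined, and no argument can establish it. What the paper's method actually yields is $2n+1$: apply \cref{luck:condition C} with $N=2n$, using balancedness, $\gdfin(G)\le 2n$ and $\gdfin(W_G(H))\le 2n-1$. The paper's own proof slips at exactly this step. \cref{luck:condition C} gives $N+1$ only when $\gdfin(G)\le N$, and $\gdfin(G)\le 2n-1$ fails already for $\Z^2$.

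\textbf{Your inductive step would not give $2n$ either, independently of the base case.}
\begin{itemize}
\item The bound $\gd_{\mathcal H}(G)\le\gdvc(Q)$ gives only $3$, not $2$, when $Q$ is a torus or Klein bottle group.
\item When $Q\cong\Z$ (annulus or M\"obius band), every subgroup of $G$ lies in $\mathcal H$, so the estimate is circular.
\item Your final sum uses $2n-2$, although you yourself only grant $2n-1$ to the members $G_{n-1}\rtimes\Z$ of $\mathcal H$. So $\gd_{\mathcal H}(G)+\sup_{H\in\mathcal H}\gdvc(H)$ gives at best $2+(2n-1)=2n+1$.
\item The ``mapping-torus estimate'' $\gdvc(K\rtimes\Z)\le\gdvc(K)+1$ is false in general: $\gdvc(\Z)=0$ but $\gdvc(\Z^2)=3$. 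The case $K=\Z$ genuinely occurs as $G_{n-1}$, for example for $\Z\times F_2$.
\end{itemize}

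\textbf{Your balancedness argument is fine in outline and close to the paper's.} The paper takes the bottom term $G_1$ as kernel and applies \cref{propertyC:sec} inductively. You instead pass to the quotient $G_i/G_{i-1}$ in which $g$ first survives. Both reduce to Condition~A for surface groups.

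Your justification of Condition~A does not work, though. A bound $\tau(\phi(x))\le C\,\tau(x)$ only gives $|l|\le C|m|$, and iterating $\phi$ compounds the constant. Argue instead that $\phi$ preserves the unique maximal cyclic subgroup containing $h$, i.e.\ Condition MAX implies Condition~A. For the Klein bottle group MAX fails, since $t^2=(at)^2$ lies in the distinct maximal cyclic subgroups $\langle t\rangle$ and $\langle at\rangle$. There, use that polycyclic groups satisfy Condition~A \cite{LW12}.
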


\begin{theorem}\label{poli:free:virtually:cy}
Let $G$ be a normally poly-free group of length $n$, then $G$ is balanced and its virtually cyclic dimension satisfies  $\gdvc(G)\leq n+1.$ 
\end{theorem}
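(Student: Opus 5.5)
We argue by induction on the length $n$, using the normal filtration $1=G_0\lhd G_1\lhd\cdots\lhd G_n=G$ with each $G_i$ normal in $G$ and each $G_{i+1}/G_i$ free. The balanced property (Lück's Condition~C) will be handled by the stability result for short exact sequences announced in the abstract. Once $G$ is known to be balanced, the dimension bound will come from the Lück--Weiermann construction, which says that for balanced groups
\[
\gdvc(G)\le \max\{\gd_{\mathcal{FIN}}(G)+1,\ \sup_{[H]}\gd_{\mathcal{FIN}}(N_G[H])+1\},
\]
where $H$ ranges over infinite virtually cyclic subgroups and $N_G[H]$ denotes the commensurator.

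\textbf{Step 1 (balanced).} Free groups are balanced. A nontrivial free group is torsion-free, and in a free group every element $g\ne1$ with $hg^mh^{-1}=g^k$ forces $|m|=|k|$, because maximal cyclic subgroups are malnormal. For the inductive step we consider
\[
1\to G_{n-1}\to G\to F\to 1,\qquad F=G/G_{n-1}\ \text{free}.
\]
Here $G_{n-1}$ is normally poly-free of length $n-1$ for the restricted filtration, so it is balanced by induction. We then invoke the short exact sequence stability result, whose hypothesis should be that kernel and quotient are balanced, possibly together with torsion-freeness of the quotient. The direct check is as follows. If $hg^mh^{-1}=g^k$ and $g\notin G_{n-1}$, project to $F$ to get $|m|=|k|$. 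If $g\in G_{n-1}$, the relation lives in $G$ rather than in $G_{n-1}$, so normality alone is not enough. Instead we use that balancedness of $G_{n-1}$ should be \emph{hereditary under automorphisms}: conjugation by $h$ induces an automorphism of $G_{n-1}$ preserving the filtration. This is the main obstacle, and it is where the hypothesis that each $G_i$ is normal in $G$ becomes essential. I would handle it by strengthening the inductive statement to: for every $g\in G$ and every $h\in G$, $hg^mh^{-1}=g^k$ implies $|m|=|k|$. The proof then inducts on the largest $i$ with $g\notin G_{i}$, projecting to $G/G_i$ so that the image of $g$ is nontrivial in the free normal piece $G_{i+1}/G_i$.

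\textbf{Step 2 (dimension).} The group $G$ is torsion-free, so $\gd_{\mathcal{FIN}}(G)=\cd(G)\le n$, since cohomological dimension is subadditive in extensions and free groups have $\cd\le 1$. For an infinite cyclic $H=\langle g\rangle$, balancedness together with torsion-freeness implies that the commensurator $N_G[H]$ has cohomological dimension at most $n$. A sharper bound comes from $N_G[H]/H$ acting cocompactly in one less free direction, which gives $\cd(N_G[H])\le n$ and hence $\gd(N_G[H]/H)\le n$ by the standard bound for the Weyl-type quotient. Plugging these into the Lück--Weiermann bound gives $\gdvc(G)\le n+1$, using the Eilenberg--Ganea caveats that are standard in low dimensions and absorbed by the $+1$.
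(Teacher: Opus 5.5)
Your Step~1 is essentially sound, but Step~2 has a genuine gap: the bound on the Weyl groups, which carries all the content of $\gdvc(G)\le n+1$, is never actually proved.

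\textbf{The gap in Step~2.} The estimate you attribute to L\"uck--Weiermann is misquoted. Its second term is a Bredon dimension of $N_G[H]$ for the family of subgroups that are finite or commensurable with $H$; it is not $\gdfin(N_G[H])+1$. With your version, the Weyl groups would never enter and balancedness would be irrelevant. Balancedness is what lets you write $N_G[H]=\bigcup_k N_G(k!H)$. Via L\"uck's Lemma~4.4 (\cref{luck:condition C}), this reduces the problem to proving $\gdfin(W_G(H))\le n$ for the Weyl groups $W_G(H)=N_G(H)/H$ of all infinite cyclic $H\le G$. Your substitute for this bound does not work, for three reasons:
\begin{enumerate}
\item $H$ is in general not normal in $N_G[H]$, so $N_G[H]/H$ is not a group. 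In the normally poly-free Klein bottle group $\langle a,t\mid tat^{-1}=a^{-1}\rangle$, $a$ commensurates $\langle t\rangle$, but $ata^{-1}=a^2t$.
\item $\mathrm{cd}(N_G[H])\le n$ holds for every subgroup and says nothing about $\gdfin$ of a quotient by $\Z$. Such a quotient can have torsion, e.g.\ $\Z^2/\langle(2,0)\rangle\cong\Z/2\times\Z$.
\item ``One less free direction'' is the splitting $\mathrm{Min}=Y\times\mathbb{R}$ from L\"uck's CAT(0) argument. That is not available for poly-free groups in general.
\end{enumerate}
Also, $\gd(G)\le n$ has to come from geometric subadditivity for extensions of torsion-free groups, not from $\mathrm{cd}$. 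The $+1$ is already consumed by L\"uck's lemma, so it cannot absorb an Eilenberg--Ganea discrepancy when $n=2$.

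\textbf{What is missing.} You need the paper's inductive bound on the Weyl groups; it in fact gives $\gdfin(W_G(H))\le n-1$. Restrict $p\colon G\to G/G_1$ to $N_G(H)$ and let $Q=p(N_G(H))$.
\begin{enumerate}
\item If $H\le G_1$, the kernel of $W_G(H)\to Q$ is $N_{G_1}(H)/H$. This is finite, because normalizers of cyclic subgroups of a free group are cyclic. Hence $\gdfin(W_G(H))\le\gdfin(Q)\le n-1$.
\item If $H\not\le G_1$, then $H\cap G_1=1$, since $G/G_1$ is torsion-free. There is an extension $1\to G_1\cap N_G(H)\to W_G(H)\to Q/p(H)\to 1$ with free kernel, and $Q/p(H)\le W_{G/G_1}(p(H))$. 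Free groups are geometrically rigid: finite extensions of free groups are virtually free, so have $\gdfin\le 1$. L\"uck's extension inequality (\cref{exten:gd:inequality}) and induction then give $\gdfin(W_G(H))\le 1+(n-2)$.
\end{enumerate}
The rigidity hypothesis is exactly what lets dimensions add when torsion is present.

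\textbf{On Step~1.} Its final form (take $g\in G_{i+1}\setminus G_i$ and pass to $G/G_i$) is correct, and it is the paper's induction unwound. However, it needs free groups to satisfy Condition~A, i.e.\ the automorphism version of balancedness; malnormality only handles inner automorphisms. The argument is short: an automorphism maps the unique maximal cyclic subgroup containing $\bar g$ to itself, so it sends that subgroup's generator to itself or its inverse. You should also drop the principle that a balanced kernel and a torsion-free balanced quotient give a balanced extension. $BS(1,n)\cong\Z[\tfrac1n]\rtimes\Z$ refutes it.
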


\begin{remark}\ \begin{itemize}
    \item \cref{poli:free:virtually:cy} improves \cite[Theorem~1.1]{MR4817678}, where a linear upper bound is also established. Note also that the upper bound in \cref{poli:free:virtually:cy} is sharp: $\Z^n$ for all $n\ge2$, realizes the equality.
    \item Some poly-$\Z$ groups, classical pure Artin braid groups, and right-angled Artin groups (RAAGs) are examples of normally poly-free groups. Their virtually cyclic dimension has been computed in \cite[Theorem 5.13]{LW12}, \cite{Ramon:Juan}, and \cite[Theorem 1.4]{MR4818170}, respectively.

    \end{itemize}
\end{remark}

To prove \cref{poli:surface:th1} and \cref{poli:free:virtually:cy}, we first prove that normally poly-surface groups are balanced (see \cref{hyperbolic:groups:propertyC} and \cref{rem:poly-free and poly-surface are balanced}). Once this is established, we apply \cref{luck:condition C} to reduce the problem to the computation of the proper geometric dimension of the Weyl groups of infinite cyclic subgroups.

Let \( G \) be a group. We say that \( G \) is \emph{balanced}, or that it satisfies \emph{Lück’s Condition~C}, if for every element \( h \in G \) of infinite order and for all nonzero integers \( m,n \), the fact that the powers \( h^{m} \) and \( h^{n} \) are conjugate in \( G \) necessarily implies that \( |m| = |n| \).

Examples of balanced groups include hyperbolic groups (see \cite{BH99}), mapping class groups (see \cite[Proposition 4.1]{JPT16}), outer automorphism groups of finitely generated free groups \cite[Proposition 3.1]{2023arXiv230801590G}, $\cat(0)$ groups (see \cite[Proof of Theorem 0.1]{MR2545612}).

The property of being balanced has several applications in geometric group theory and algebraic topology. 
For instance, if a group \( G \) is balanced, then the commensurator of an infinite cyclic subgroup can be realized as a normalizer. 
More precisely, let \( H \) be an infinite cyclic subgroup of \( G \). We define the \emph{commensurator} and the \emph{normalizer} of \( H \) in \( G \) as follows:
\[
N_G[H] := \{ g \in G \mid gHg^{-1} \text{ is commensurable with } H \}, \qquad
N_G(H) := \{ g \in G \mid gHg^{-1} = H \}.
\] 
If \( G \) is balanced and either \( N_G[H] \) is finitely generated or \( G \) satisfies the unique root property, then there exists a subgroup \( H' \leq G \) commensurable with \( H \) such that \( N_G[H] = N_G(H') \) (see \cref{conditionC:conmen:fg} and \cite[Proposition 3.4]{MR3797073}). Moreover, the balanced property allows us to study cohomological properties of the group, see \cref{virtually:cyclic:dimension}.

Note that the balanced property is not preserved under short exact sequences nor under graphs of groups. Indeed, for \(n \ge 2\), the Baumslag--Solitar group
\[
BS(1,n)=\langle a,t \mid t a t^{-1}=a^{n}\rangle
\]
is not balanced. Nevertheless, \(BS(1,n)\) decomposes as a semidirect product
\[
BS(1,n)\cong \mathbb Z\!\left[\tfrac{1}{n}\right]\rtimes \mathbb Z,
\]
where both factors \(\mathbb Z\!\left[\tfrac{1}{n}\right]\) and \(\mathbb Z\) are balanced. 
Moreover, \(BS(1,n)\) can also be realized as the fundamental group of a graph of groups with one vertex and one edge.

Given the importance of the balanced property, we study when the property of being balanced is preserved under short exact sequences, graphs of groups, and direct limits. For instance, we show that if the splitting of the fundamental group of a graph of groups is acylindrical in the sense of Sela and all vertex groups are balanced, then the fundamental group itself is balanced. As an application, we recover the fact that the fundamental groups of (possibly non-orientable) 3-manifolds are balanced. In what follows, we present these results in detail.


\begin{theorem}\label{graph:groups:condition:C:1}
Let $\mathcal{G} = (\Gamma, (G_{v})_{v \in V}, (G_{a})_{a \in A}, (\varphi_{a})_{a \in A})$ be a graph of groups with fundamental group $G$ and Bass–Serre tree $T$. Suppose that:
\begin{enumerate}[a)]
    \item for every $v \in V$, the vertex group $G_{v}$ is balanced; and
    \item For every elliptic element $h \in G$ of infinite order, the fixed-point set 
$T^{\langle h \rangle}$ does not contain the axis of any loxodromic element.
\end{enumerate}
Then $G$ is balanced.
\end{theorem}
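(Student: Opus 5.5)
Let $h\in G$ have infinite order, and suppose $gh^mg^{-1}=h^n$ for some $g\in G$ and nonzero integers $m,n$. We must show $|m|=|n|$. The plan is to split according to the type of $h$ in the action of $G$ on the Bass--Serre tree $T$. Since $G$ acts on $T$ without inversions, $h$ is either elliptic or loxodromic. Moreover, $h^k$ has the same type as $h$ for every $k\neq 0$.

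\emph{Loxodromic case.} The translation length satisfies $\ell(h^k)=|k|\,\ell(h)$, and translation length is a conjugacy invariant. Hence
\[
|m|\,\ell(h)=\ell(h^m)=\ell(gh^mg^{-1})=\ell(h^n)=|n|\,\ell(h).
\]
Since $\ell(h)>0$, this gives $|m|=|n|$. No hypothesis is needed here.

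\emph{Elliptic case.} First I will show that $g$ is elliptic. We have $gT^{\langle h^m\rangle}=T^{\langle h^n\rangle}$. Put $k=mn$. Then $h^k$ is a power of both $h^m$ and $h^n$. Moreover, $gh^{k}g^{-1}=(gh^mg^{-1})^n=h^{n^2}$, and so $g^j h^{k}g^{-j}$ is a power of $h$ for every $j\ge0$.

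Suppose, for contradiction, that $g$ is loxodromic with axis $A_g$. Fix a vertex $x$ fixed by $h$. Then $x$ is fixed by all powers of $h$. The vertices $g^jx$ are fixed by $g^j h^{k} g^{-j}=h^{kn^{j}\cdots}$, which is a power of $h$; in particular they are fixed by some nonzero power of $h$. This by itself does not give a single fixed-point set containing $A_g$, so I will argue more carefully instead.

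The set $F=T^{\langle h^{k}\rangle}$ satisfies $gF=T^{\langle h^{n^2}\rangle}$. Since $T^{\langle h\rangle}\subseteq T^{\langle h^{a}\rangle}$ for every $a$, the union $F_\infty=\bigcup_{a\neq0}T^{\langle h^{a}\rangle}$ is a $g$-invariant subtree. It is an increasing union of subtrees, and any two of them are contained in the fixed set of a common power. A nonempty $g$-invariant subtree contains the axis of the loxodromic element $g$. A finite segment of $A_g$ of length exceeding $\ell(g)$, together with its $g$-translates, lies in $F_\infty$, and such a finite segment lies in the fixed set of a single power $h^{a}$. Here the main obstacle appears: the whole axis need not be fixed by a single element. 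I will try to arrange the exponents so that the problem closes up. If $|m|=|n|$ we are done, so assume $|m|<|n|$ (the other case is symmetric, using $g^{-1}$). Then $g^{-1}h^{n}g=h^m$, so $g^{-1}$ maps $T^{\langle h^{n}\rangle}$ into $T^{\langle h^{m}\rangle}$.

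Set $S=T^{\langle h^{mn}\rangle}$. Then $g^{-1}$ carries $T^{\langle h^{n\cdot m}\rangle}$ onto $T^{\langle h^{m\cdot m}\rangle}$, and this set is contained in $T^{\langle h^{m^2 n}\rangle}$. Iterating, the orbit stays inside the fixed set of $h^{N}$ for a suitable $N$ only for finitely many steps. I therefore expect the cleanest route to be to replace $h$ by the element $h^{N}$, chosen so that hypothesis (b) applies to the elliptic element $h^{N}$ of infinite order, with $T^{\langle h^N\rangle}$ shown to be $g$-invariant. If that fails, a fallback is to use the hypothesis in the form ``an axis cannot be fixed'' applied to finite-length unions of fundamental domains: $T^{\langle h^{a}\rangle}$ would contain arbitrarily long segments of $A_g$ for bounded $a$.

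Once $g$ is known to be elliptic, this step should conclude as follows. The element $g$ fixes a vertex $y$, and $h^{mn}$ fixes a vertex $x$. Choose the vertex $v$ on the geodesic $[x,y]$ that is closest to $x$ among the points of $T^{\langle g\rangle}$. Then I aim to find a single vertex $v$ fixed by both $g$ and some power $h^{c}$, using the standard fact that $T^{\langle g\rangle}\cap T^{\langle h^{c}\rangle}\neq\emptyset$ when $g$ normalizes $\langle h^{c}\rangle$ up to powers. Inside the stabilizer $G_v$, which is conjugate to a vertex group and hence balanced by (a), we have $g(h^{c})^{m}g^{-1}=(h^{c})^{n}$. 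Balancedness of $G_v$ then yields $|m|=|n|$.
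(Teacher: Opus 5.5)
\textbf{Gap in the elliptic case: the step that $g$ is elliptic cannot be proved under (a) and (b).}

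Your loxodromic case is correct, and it is simpler than the paper's. Translation length is conjugation-invariant and satisfies $\ell(h^k)=|k|\,\ell(h)$, so no hypothesis is needed there. The paper instead maps the setwise stabilizer of the axis to $D_\infty$, uses (b) to make the kernel torsion, and applies \cref{propertyC:sec}.

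The genuine gap is in the elliptic case: you never prove that $g$ is elliptic. You correctly observe that $F_\infty=\bigcup_{a\neq 0}T^{\langle h^a\rangle}$ is a $g$-invariant subtree. So if $g$ were loxodromic, its axis would lie in $F_\infty$, but not necessarily in any single $T^{\langle h^a\rangle}$. After that you only list strategies. None of them can work, because under (a) and (b) alone the statement is false.

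Take $G=BS(1,2)=\langle a,t\mid tat^{-1}=a^2\rangle\cong\mathbb{Z}[1/2]\rtimes\Z$, viewed as an HNN extension of $\langle a\rangle\cong\Z$.
\begin{itemize}
\item For $s\in G$, let the height of the vertex $sv_0$ of the Bass--Serre tree be the $t$-exponent sum $j$ of $s$. Then $sv_0$ has stabilizer $2^{j}\Z\subset\mathbb{Z}[1/2]$.
\item The elliptic elements of infinite order are exactly the nontrivial $x\in\mathbb{Z}[1/2]$. Such an $x$ fixes precisely the vertices of height at most $v_2(x)$, where $v_2$ is the $2$-adic valuation.
\item Every loxodromic element shifts heights by a nonzero constant, so heights along its axis are unbounded above. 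Hence no fixed set $T^{\langle x\rangle}$ contains an axis.
\end{itemize}
Thus (a) and (b) hold, yet $tat^{-1}=a^2$. Here $g=t$ is loxodromic, $F_\infty=T$, and $tT^{\langle a\rangle}=T^{\langle a^2\rangle}\neq T^{\langle a\rangle}$. This is exactly the unjustified step in the paper's own Case~1, which asserts that ``$g$ preserves $T^{\langle h\rangle}$''. What is actually true is only $gT^{\langle h^m\rangle}=T^{\langle h^n\rangle}$.

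\textbf{What can be proved.} Your argument goes through under a stronger hypothesis: for every elliptic $h$ of infinite order, $F_\infty$ contains no axis. This holds in the Sela-acylindrical setting of \cref{cor:CinAcylTrees}. Indeed, any two points of $F_\infty$ lie in a common $T^{\langle h^c\rangle}$, so $F_\infty$ has diameter at most the acylindricity constant.

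Under that hypothesis $g$ is elliptic. Your endgame then needs a real argument in place of the ``standard fact'' you quote:
\begin{itemize}
\item Pick $x\in F_\infty$ and let $p$ be its projection to $T^{\langle g\rangle}$.
\item The element $g$ must move the first edge of $[p,x]$; otherwise it would fix a point closer to $x$. Hence $p\in[x,gx]$.
\item Since $F_\infty$ is a $g$-invariant subtree, $[x,gx]\subseteq F_\infty$. So $p\in T^{\langle h^c\rangle}$ for some $c\neq 0$.
\item Now $g$ and $h^c$ both lie in $\operatorname{Stab}(p)$, a conjugate of a vertex group and hence balanced. Since $g(h^c)^mg^{-1}=(h^c)^n$, you get $|m|=|n|$.
\end{itemize}
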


Several special cases of \cref{graph:groups:condition:C:1} have been previously studied in the literature. 
For example, \cite{2015arXiv150905688B} treats the case of graphs of groups whose edge groups are cyclic.

The hypothesis imposed in \cref{graph:groups:condition:C:1} is fairly mild. 
In fact, we see in  \cref{applications:previus} many natural and widely occurring classes of graphs of groups satisfy this condition.

\begin{corollary}
Let $\mathcal{G} = (\Gamma, (G_{v})_{v \in V}, (G_{a})_{a \in A}, (\varphi_{a})_{a \in A})$ be a graph of groups with fundamental group $G$ and Bass–Serre tree $T$. Suppose that:
\begin{enumerate}[a)]
    \item for every $v \in V$, the vertex group $G_{v}$ is balanced; and
    \item the action of $G$ in $T$ is Sela-acylindrical. 
    \end{enumerate}
Then $G$ is balanced.
\end{corollary}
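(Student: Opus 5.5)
The plan is to deduce the corollary directly from \cref{graph:groups:condition:C:1}. Hypothesis a) is the same in both statements, so it only remains to show that Sela-acylindricity implies hypothesis b) of the theorem.

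Recall that the action of $G$ on $T$ is Sela-acylindrical if there is a constant $k$ such that every nontrivial element of $G$ fixes no segment of $T$ of length greater than $k$. Equivalently, for every $1\neq h\in G$ the fixed-point set $T^{\langle h\rangle}$ has diameter at most $k$.

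Let $h\in G$ be elliptic of infinite order. In particular $h\neq 1$, so $T^{\langle h\rangle}$ is a nonempty subtree of diameter at most $k$. Now suppose $g\in G$ is loxodromic. Its axis $A_g$ is a bi-infinite geodesic line in $T$, so it contains segments of every length. In particular, $A_g$ contains a segment of length $k+1$.

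If $A_g$ were contained in $T^{\langle h\rangle}$, then $h$ would fix a segment of length $k+1$. This contradicts acylindricity. Hence $T^{\langle h\rangle}$ contains no axis of a loxodromic element, which is exactly hypothesis b), and \cref{graph:groups:condition:C:1} shows that $G$ is balanced.

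The only point needing care is matching the convention of Sela-acylindricity. Some authors phrase it as "segments of length $>k$ have trivial pointwise stabilizer", or state it only for the edge stabilizers of long paths. All of these versions give a uniform bound on the diameter of $T^{\langle h\rangle}$ for $h\neq 1$, and that bound is all the argument uses.

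Note also that the infinite-order hypothesis on $h$ only serves to guarantee $h\neq 1$. The identity fixes all of $T$, so it has to be excluded, and this is precisely why the theorem restricts b) to elements of infinite order.
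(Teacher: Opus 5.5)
Your reduction is the paper's own: it too gets the corollary from \cref{graph:groups:condition:C:1} via Remark~\ref{remark:BSTconditions}(3), and your check that Sela-acylindricity implies hypothesis b) is correct. The problem is the black box itself. \cref{graph:groups:condition:C:1} is false as stated, so invoking it proves nothing. Take $BS(1,2)=\langle a,t\mid tat^{-1}=a^{2}\rangle\cong\Z[1/2]\rtimes\Z$ acting on its HNN tree, with vertex group $\langle a\rangle\cong\Z$, which is balanced. The vertex $bt^{m}v_0$ (with $b\in\Z[1/2]$) has stabilizer $2^{m}\Z$; call $m$ its height. Each vertex has one neighbour of height $m-1$ and two of height $m+1$. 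A nontrivial elliptic element $x$ lies in $\Z[1/2]$ and fixes exactly the vertices of height at most $\nu_2(x)$, where $\nu_2$ is the $2$-adic valuation. Along a geodesic, once a step goes up in height every later step also goes up, because the only lower neighbour is the vertex just left. So every bi-infinite geodesic has unbounded height, and no $T^{\langle x\rangle}$ contains any line. Hence a) and b) hold, yet $tat^{-1}=a^{2}$. The error in Case~1 of the paper's proof is the claim that $g$ preserves $T^{\langle h\rangle}$. From $gh^{p}g^{-1}=h^{q}$ one only gets $g\,T^{\langle h^{p}\rangle}=T^{\langle h^{q}\rangle}$; here $t$ carries the vertices of height $\le 0$ onto those of height $\le 1$.

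So hypothesis b) throws away exactly what acylindricity gives you: a diameter bound valid for all powers of $h$ at once. The corollary is still true, but you must use that bound directly. Suppose $gh^{p}g^{-1}=h^{q}$ with $h$ of infinite order.

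If $h$ is loxodromic, translation lengths give $|p|\,\tau(h)=\tau(gh^{p}g^{-1})=\tau(h^{q})=|q|\,\tau(h)$, and $\tau(h)>0$.

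If $h$ is elliptic, set $F=\bigcup_{r\neq 0}T^{\langle h^{r}\rangle}$. Since $T^{\langle h^{r}\rangle}\cup T^{\langle h^{s}\rangle}\subseteq T^{\langle h^{rs}\rangle}$ and every $h^{r}$ with $r\neq 0$ is nontrivial, $F$ is a nonempty subtree of diameter at most $k$. Moreover $T^{\langle h^{r}\rangle}\subseteq T^{\langle h^{pr}\rangle}$ and $g\,T^{\langle h^{pr}\rangle}=T^{\langle h^{qr}\rangle}$ give $gF\subseteq F$, and symmetrically $g^{-1}F\subseteq F$. Thus $g$ preserves the bounded subtree $F$, so it fixes its center. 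The center is a vertex, or the midpoint of an edge whose endpoints are then fixed because there are no inversions. In either case $g$ fixes a vertex $v\in F$, say $v\in T^{\langle h^{r}\rangle}$. Then $g,h^{r}\in G_v$, a conjugate of a vertex group and hence balanced. Since $g(h^{r})^{p}g^{-1}=(h^{r})^{q}$ with $h^{r}$ of infinite order, you get $|p|=|q|$.
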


To prove \cref{graph:groups:condition:C:1}, we first show that, under suitable hypotheses, the balanced property is preserved under short exact sequences. In general, however, the balanced property is not preserved under short exact sequences; see \cref{ses:balanced:property}.




As an application of \cref{graph:groups:condition:C:1}, 
we recover the result that the fundamental groups of $3$--manifolds are balanced.

\begin{theorem}\label{balanced:3-manifolds}
The fundamental group of a compact, connected (possibly non-orientable) $3$-manifold is balanced.
\end{theorem}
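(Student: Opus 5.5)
Our plan is to reduce to the orientable, prime, geometric pieces and then assemble them with \cref{graph:groups:condition:C:1}. First, a finite-index subgroup of a balanced group need not control the whole group a priori. However, if $h^m = g h^n g^{-1}$ with $|m|\neq|n|$ in $G$, then passing to the orientation double cover $\widetilde M$ with $H=\pi_1(\widetilde M)$ of index at most $2$, we may replace $h$ by $h^2\in H$. We may also replace $g$ by $g^2\in H$: iterating the relation gives $h^{m^2}=g^2 h^{n^2} g^{-2}$ after suitable powering, namely $g^2 h^{2n^2}g^{-2}=g(h^{2mn})g^{-1}=h^{2m^2}$. This yields an imbalance $|m^2|\neq|n^2|$ inside $H$. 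So it suffices to treat orientable $M$. We also cap off spherical boundary components and discard the irrelevant $\R P^2$ issues, which do not arise after orientation.

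Next, by the Kneser--Milnor prime decomposition, $\pi_1(M)$ is a free product of fundamental groups of prime orientable $3$-manifolds. A free product is the fundamental group of a graph of groups with trivial edge groups. Hypothesis b) of \cref{graph:groups:condition:C:1} holds trivially, since an element of infinite order fixing more than a vertex would lie in a trivial edge stabilizer. We are thus reduced to prime pieces. Those with $S^2\times S^1$ give $\Z$, which is balanced, and finite fundamental groups are vacuous.

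For an irreducible piece we use the JSJ decomposition together with geometrization. If $M$ is geometric, its group is balanced by known cases. Hyperbolic pieces with finite volume are relatively hyperbolic, and in fact they act geometrically or cusp-wise on $\mathbb H^3$, where translation lengths of conjugate elements agree, so $|m|\ell(h)=|n|\ell(h)$ with $\ell(h)>0$ for loxodromics; parabolic elements lie in $\Z^2$ cusp groups, which we handle by noting that conjugates of a parabolic in $\Z^2$ stay in the same cusp subgroup's conjugate and the cusp subgroup is malnormal, hence the relation happens inside abelian $\Z^2$ and forces $m=n$. Seifert fibered pieces are central extensions $1\to\Z\to\pi_1\to Q\to1$ of a $2$-orbifold group $Q$, which is balanced (Fuchsian, Euclidean, or spherical). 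Here we invoke the paper's short exact sequence criterion: if $h$ maps to an infinite-order element, balance in $Q$ gives $|m|=|n|$. If $h$ lies in the fiber, either the conjugation acts on the central fiber by $\pm1$, again giving $|m|=|n|$. Sol and Nil manifolds are virtually poly-$\Z$ and can be handled similarly, or via CAT(0) where applicable.

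Finally we assemble the JSJ graph of groups with $\Z^2$ edge groups, and this is the main obstacle: verifying hypothesis b) of \cref{graph:groups:condition:C:1}. The JSJ action is not always acylindrical, since fiber elements of Seifert pieces can fix long paths, for example in graph manifolds. Our approach is as follows. If an elliptic $h$ fixes the axis of a loxodromic $t$, then $h$ fixes a bi-infinite line, so $h$ is in a chain of edge tori of consecutive Seifert pieces in which it must be the fiber of each. By minimality of the JSJ decomposition, adjacent Seifert fibers do not match, so a nontrivial element cannot be a fiber on both sides. Hence the fixed set of $h$ has diameter at most $2$, which excludes containing an axis. For Sol-type or exceptional cases where the JSJ fails this, for example torus bundles, we treat $M$ directly as geometric. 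Alternatively, if this fails, we pass to a finite cover where pieces have $H^2\times\R$ product structure and argue as in the first reduction.
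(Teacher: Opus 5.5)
Your route is the paper's. You pass to the orientation cover; your identity $g^2h^{2n^2}g^{-2}=h^{2m^2}$ is a correct special case of the finite-extension lemma. You split along the prime decomposition with trivial edge groups and apply the graph-of-groups criterion to the JSJ splitting. Seifert pieces go through the extension by the fiber, and hyperbolic pieces are handled geometrically; your translation-length and cusp argument is a fine substitute for the paper's appeal to elementary groups being virtually abelian.

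\textbf{The acylindricity step is wrong as written.} The paper does not prove hypothesis b) for the JSJ tree; it simply cites an acylindricity lemma. You claim that an element fixing a line must be the fiber at each interior Seifert vertex, so fixed sets have diameter at most $2$. This fails at vertices coming from the twisted $I$-bundle $N_K$ over the Klein bottle, which do occur as JSJ pieces. Its peripheral subgroup $P=\langle a,b^2\rangle\cong\Z^2$ has index $2$ in $\pi_1(K)=\langle a,b\mid bab^{-1}=a^{-1}\rangle$. So such a vertex has valence $2$, and all of $P$ fixes both incident edges. The fiber of an adjacent Seifert piece then fixes paths of length up to $4$, and $4$ is the correct uniform bound.

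The conclusion survives, but you need one more step:
\begin{itemize}
\item If a fixed line passes through such a vertex, its neighbours are $w$ and $bw$ for a lift $w$ of the adjacent piece $W$. If $W$ is hyperbolic, $h=1$ already.
\item Otherwise $h\in F\cap bFb^{-1}$, where $F\le P$ is the primitive fiber subgroup of $W$.
\item Since $b$ acts on $P$ by $a\mapsto a^{-1}$, $b^2\mapsto b^2$, the equality $bFb^{-1}=F$ forces $F=\langle a\rangle$ or $F=\langle b^2\rangle$.
\item That means the fibration of $W$ extends over $N_K$, contradicting JSJ minimality. So $F\cap bFb^{-1}=1$.
\item Two copies of $N_K$ are adjacent only in a torus semi-bundle, which is geometric.
\end{itemize}
Your fallback of passing to a finite cover with product pieces would also work, but it rests on a separability theorem you did not state.

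\textbf{The bounded case is missing.} Beyond capping off spheres, you never treat $\partial M\neq\emptyset$. Compressible boundary, or incompressible boundary of genus at least $2$, falls outside both your prime/JSJ set-up and your description of hyperbolic pieces as finite-volume with $\Z^2$ cusps. The paper handles boundary by embedding into a closed manifold group. The quickest way is doubling: the fold $DM\to M$ is a retraction, so $\pi_1(M)$ injects into $\pi_1(DM)$, and balancedness passes to subgroups.

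\textbf{Your Sol treatment is a genuine improvement.} Handling Sol torus bundles and semi-bundles separately is necessary, and the paper's proof passes over it. There the JSJ edge group $\Z^2$ is normal and fixes the entire Bass--Serre line, which is the axis of a loxodromic element. So hypothesis b) fails outright, and one must instead use that these groups are virtually polycyclic.
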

\cref{balanced:3-manifolds} was proved by Jaco and Shalen for Haken \(3\)-manifolds \cite[Theorem VI.2.1]{Jaco:Shalen}, and the general case was later established in \cite{Shalen2001}. We prove \cref{balanced:3-manifolds} for closed \(3\)-manifolds.  
For the case of nonempty boundary, the result follows from \cite[Lemma~1.6]{matthias:stefan:henry} together with the fact that the balanced property passes to subgroups.

\subsection*{Outline of the paper}

In \cref{prelimi} we collect several preliminary facts concerning balanced groups that will be used throughout the paper. 

In \cref{preservation:balanced:property} we study the stability of the balanced property under short exact sequences, direct limits, and graphs of groups. In particular, we prove \cref{graph:groups:condition:C:1}. 

In \cref{applications:previus} we apply the results developed in the previous section to show that the fundamental groups of graph manifolds and $3$-manifolds are balanced. 

Finally, in \cref{virtually:cyclic:dimension} we establish upper bounds for certain geometric dimensions of poly-surface and poly-free groups. In particular, we prove \cref{poli:surface:th1} and \cref{poli:free:virtually:cy}.\\

\noindent{\bf Acknowledgments.} The first author was partially funded by the DGAPA-UNAM research grants IN114323 and IN106925. The second author was supported by the UNAM Postdoctoral Program (POSDOC) and by DGAPA-UNAM through grant PAPIIT~IN102426. We thank Rita Jiménez Rolland for helpful comments on a draft of this paper.

\section{Preliminaries}\label{prelimi}
In this section we collect several basic results concerning balanced groups that will be used in the subsequent sections.

\begin{definition}\cite[Condition 4.1]{MR2545612}
We say that a group \( G \) is \emph{balanced} or that it  satisfies \emph{condition C} if for every \( g,h \in G \) with \( |h| = \infty \) and \( k,l \in \mathbb{Z}\setminus\{0\} \), the equation \( gh^k g^{-1} = h^l \) implies that \( |k| = |l| \).
\end{definition}

\begin{example}\label{example:condition C}
    \begin{enumerate}
        \item Hyperbolic groups are a well-known example of balanced groups, see \cite{BH99}.
        \item More generally, since semi-hyperbolic groups satisfy an Algebraic Flat Torus Theorem (see \cite{BH99}), then semi-hyperbolic groups are balanced. In particular, CAT(0) groups are balanced.
        \item Using the same argument as the previous example, if $G$ is a group such that any infinite cyclic subgroup is undistorted (see \cite{DK18}), then $G$ is balanced.
        \item If a group is hyperbolic relative to subgroups that are balanced, then it is itself balanced (see Corollary 4.26 in \cite{MR2182268} or Proposition \ref{prop:Cinrelhyper} below).
    \end{enumerate}
\end{example}

It is well-known that being balanced is preserved under finite extensions and passes to subgroups. 
We state this as a lemma for future reference.

\begin{lemma}\cite[Lemma 4.14]{MR1760573}\label{conditionC:finite:extension}
If a group contains a finite-index subgroup that is balanced, then the whole group is balanced. 
Moreover, every subgroup of a balanced group is also balanced.
\end{lemma}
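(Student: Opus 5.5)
The subgroup statement follows directly from the definition, so I would dispose of it first. Suppose $H \le G$ with $G$ balanced, and suppose $g,h \in H$ with $|h|=\infty$ and $gh^kg^{-1}=h^l$ for nonzero integers $k,l$. The same equation holds in $G$, and $h$ still has infinite order there. Since $G$ is balanced, $|k|=|l|$.

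For the finite-index statement, let $H \le G$ have finite index and be balanced. I would first replace $H$ by its normal core $N=\bigcap_{x\in G} xHx^{-1}$. This subgroup is normal of finite index in $G$, and it is balanced by the first part. Set $d=[G:N]$.

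Now take $g,h\in G$ with $|h|=\infty$ and $gh^kg^{-1}=h^l$. There are two preliminary reductions.
\begin{itemize}
\item Choose $m\ge1$ with $h^m\in N$; for instance $m=d!$ works, since the image of $h$ in the finite quotient $G/N$ has order dividing $d!$.
\item Raising the relation to the $m$-th power gives $g(h^m)^k g^{-1}=(h^m)^l$.
\end{itemize}
The conjugating element $g$ need not lie in $N$. However, the conjugation relation iterates: $g^j h^{k^j} g^{-j} = h^{l^j}$. This is easy to check by induction, using that conjugation by $g$ is a homomorphism, so $g x^{k} g^{-1} = (g x g^{-1})^{k}$ applied with $x = h^{k^{j-1}}$.

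Pick $j\ge1$ with $g^j\in N$; again $j=d!$ works. Write $u=h^m\in N$, which has infinite order. Applying the iterated relation to $h^m$ gives
\[
g^j u^{k^j} g^{-j} = u^{l^j},
\]
with $g^j\in N$ and $k^j,l^j\neq0$. Since $N$ is balanced, $|k|^j=|l|^j$, and taking positive $j$-th roots gives $|k|=|l|$.

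The only step needing care is this iteration trick, which moves the conjugator into $N$ while keeping the exponents nonzero. Everything else is routine.
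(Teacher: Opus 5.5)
Your proof is correct and complete. The paper gives no proof of this lemma; it only cites it from the literature. So there is no in-paper argument to compare with, and your write-up supplies a self-contained one.

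The subgroup half is immediate from the definition. In the finite-index half, the one real idea is the one you isolate: iterate the relation to $g^{j}h^{k^{j}}g^{-j}=h^{l^{j}}$. This pushes both the conjugator and the conjugated element into the balanced subgroup while keeping the exponents nonzero, and then $|k|^{j}=|l|^{j}$ forces $|k|=|l|$.

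Two small remarks. First, passing to the normal core is harmless but not needed. If $H\le G$ has index $d$, the cosets $H,gH,\dots,g^{d}H$ cannot all be distinct. Hence some positive power of every element of $G$ already lies in $H$, and the same argument runs directly inside $H$.

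Second, your justification of the iteration is a bit imprecise as written. With $x=h^{k^{j-1}}$, you must conjugate by $g^{j-1}$ first and then by $g$, not just by $g$. The clean inductive step is
\[
g^{j}h^{k^{j}}g^{-j}=g^{j-1}\bigl(gh^{k}g^{-1}\bigr)^{k^{j-1}}g^{-(j-1)}=g^{j-1}h^{lk^{j-1}}g^{-(j-1)}=\bigl(g^{j-1}h^{k^{j-1}}g^{-(j-1)}\bigr)^{l}=h^{l^{j}}.
\]
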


One of the main motivations for studying balanced groups is that it allows us to control the structure of the commensurator of virtually cyclic subgroups via normalizers of finite-index subgroups. In particular, under additional finiteness assumptions or the uniqueness of roots property, the commensurator can be realized precisely as a normalizer. The precise statements are given in the propositions below.

\begin{proposition}\label{conditionC:conmen:fg}
Let $G$ be a balanced group. Let $H$ be an infinite cyclic subgroup of $G$ and suppose that $N_G[H]$ is finitely generated. Then there is a finite index subgroup $H'\leq H$ such that $N_G[H]=N_G[H']=N_G(H')$.
\end{proposition}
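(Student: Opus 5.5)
Write $H=\langle h\rangle$ and let $g_1,\dots,g_r$ generate $N_G[H]$. The plan is to exploit that, in a balanced group, every element of the commensurator conjugates some power of $h$ to $h^{\pm}$ of the same power. Then we pass to a common power that works for all generators at once.

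\emph{Step 1: each generator normalizes some finite-index subgroup of $H$.} Fix $i$. Since $g_iHg_i^{-1}$ is commensurable with $H$, the intersection $g_iHg_i^{-1}\cap H$ has finite index in both. Hence there are nonzero integers $k_i,l_i$ with
\[
g_ih^{k_i}g_i^{-1}=h^{l_i}.
\]
Because $h$ has infinite order and $G$ is balanced, $|k_i|=|l_i|$. Thus $g_i h^{k_i} g_i^{-1}=h^{\pm k_i}$, so $g_i$ normalizes $\langle h^{k_i}\rangle$.

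\emph{Step 2: choose a common power.} Set $m=\operatorname{lcm}(|k_1|,\dots,|k_r|)$ and $H'=\langle h^m\rangle$, a finite-index subgroup of $H$. Write $m=k_ic_i$. Raising the relation from Step 1 to the power $c_i$ gives
\[
g_ih^{m}g_i^{-1}=h^{\pm m}.
\]
So every $g_i$ lies in $N_G(H')$, and therefore $N_G[H]\subseteq N_G(H')$.

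\emph{Step 3: close the chain of inclusions.} It is standard that $N_G(H')\subseteq N_G[H']$. Moreover, since $H'$ and $H$ are commensurable and commensurability is an equivalence relation, $N_G[H']=N_G[H]$. Combining these,
\[
N_G[H]\subseteq N_G(H')\subseteq N_G[H']=N_G[H],
\]
so all three groups coincide.

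The only substantive point is Step 1, which is where the balanced hypothesis enters. Finite generation is what makes the lcm in Step 2 finite; it is the step that would fail without that assumption, so I expect no serious obstacle beyond it.
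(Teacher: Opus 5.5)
Your proof is correct and follows the paper's argument essentially verbatim: use the balanced property to get $g_ih^{k_i}g_i^{-1}=h^{\pm k_i}$ for each generator, pass to a common power, and close the chain $N_G[H]\subseteq N_G(H')\subseteq N_G[H']=N_G[H]$. The only difference is cosmetic: you take $m=\operatorname{lcm}(|k_1|,\dots,|k_r|)$ where the paper takes the product $k_1k_2\cdots k_n$.
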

\begin{proof}
Let $H=\langle h \rangle$ be an infinite cyclic subgroup of $G$ such that $N_G[H]$ is finitely generated by $g_1, \cdots, g_n$. Since $g_i\in N_G[H]$ we have that $g_ih^{k_i}g_i^{-1}=h^{l_i}$ for some integers $k_i,l_i$, by assumption $G$ is a balanced group, it follows that \( l_i = \pm k_i \). Let $k=k_1k_2\cdots k_n$. Note that $g_ih^kg_i^{-1}=h^{\pm k}$ for all $i=1,\cdots, n$. Then $N_G[H]\subseteq N_G( \langle h^{k} \rangle ) \subseteq N_G[ \langle h^{k} \rangle ]=N_G[H]$
the last equality holds because $\langle h^{k} \rangle$ is commensurable with $H$.
\end{proof}

\begin{proposition}\cite[Lemma 4.2]{MR2545612}\label{lemma:commensurator_cyclic}
Let \( G \) be a balanced group. Then, for any infinite cyclic subgroup \( C \) of $G$, we have a nested sequence of normalizers:
\[
N_G(C) \subseteq N_G(2!C) \subseteq N_G(3!C) \subseteq \cdots
\]
where 
\[
k!C = \{ h^{k!} \mid h \in C \}.
\]
Moreover
\[
N_G[C] = \bigcup_{k \geq 1} N_G(k!C).
\]

\end{proposition}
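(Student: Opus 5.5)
The plan is to verify the two claims separately: first that the normalizers are nested, then that their union is exactly the commensurator. Throughout, write $C=\langle h\rangle$ with $h$ of infinite order, so that $k!C=\langle h^{k!}\rangle$.

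\emph{Nesting.} Let $g\in N_G(k!C)$. Then $g h^{k!} g^{-1}$ is a generator of $\langle h^{k!}\rangle$, so $g h^{k!}g^{-1}=h^{\pm k!}$. Raising both sides to the power $k+1$ gives $g h^{(k+1)!}g^{-1}=h^{\pm (k+1)!}$, so $g\in N_G((k+1)!C)$. This step does not use the balanced hypothesis.

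\emph{Union contained in the commensurator.} Each $k!C$ has finite index in $C$, so it is commensurable with $C$. Hence $N_G(k!C)\subseteq N_G[k!C]=N_G[C]$. Again the balanced hypothesis is not needed.

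\emph{Commensurator contained in the union.} This is where balancedness enters, and it is the main point of the argument. Let $g\in N_G[C]$. Then $gCg^{-1}\cap C$ has finite index in both $gCg^{-1}$ and $C$. In particular it is nontrivial, so it equals $\langle g h^{a} g^{-1}\rangle=\langle h^{b}\rangle$ for some nonzero integers $a,b$. This gives $g h^{a} g^{-1}=h^{\pm b}$, and since $G$ is balanced, $|a|=|b|$. Set $m=|a|$. Then $g h^{m} g^{-1}=h^{\epsilon m}$ with $\epsilon=\pm1$, after replacing $a$ by $-a$ if necessary. For any $k\ge m$, $m$ divides $k!$, so raising to the power $k!/m$ yields $g h^{k!}g^{-1}=h^{\epsilon k!}$. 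Therefore $g\in N_G(k!C)$.

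The only subtle point is extracting a single conjugation relation $gh^{a}g^{-1}=h^{\pm b}$ from commensurability. This is handled by the fact that a nontrivial subgroup of an infinite cyclic group is infinite cyclic, and that the intersection $gCg^{-1}\cap C$ is simultaneously generated by a power of $ghg^{-1}$ and by a power of $h$.
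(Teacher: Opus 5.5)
Your proof is correct and takes essentially the paper's approach. The paper gives no proof of its own here (it cites Lück's Lemma~4.2), but your key step is the mechanism of its proof of Proposition~\ref{conditionC:conmen:fg}: balancedness upgrades a relation $gh^{a}g^{-1}=h^{\pm b}$ to $gh^{m}g^{-1}=h^{\pm m}$, and one then passes to a power divisible by $m$ (here $k!$ with $k\ge m$); as you note, the nesting and the inclusion of the union in $N_G[C]$ are formal and do not use the hypothesis.
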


It was noted in \cite[proposition 3.4]{MR3797073} that if a group is balanced and satisfies the uniqueness of roots, then the commensurator of any infinite cyclic subgroup can be realized as the normalizer.

\begin{definition}[Uniqueness of Roots]
A group \( G \) has the property of uniqueness of roots if \( f, g \in G \) are such that \( f^n = g^n \) for some \( n \), then \( f = g \).
\end{definition}

\begin{theorem}\cite[proposition 3.4]{MR3797073}\label{conditionC:and:roots}
Let \( G \) be a balanced group that has a finite-index normal subgroup that satisfies the uniqueness of roots property. Then the commensurators of virtually cyclic subgroups of \( G \) are realized as normalizers.
\end{theorem}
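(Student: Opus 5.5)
Let $G$ be balanced, let $N \trianglelefteq G$ be a finite-index normal subgroup with uniqueness of roots, and let $V \le G$ be virtually cyclic. The plan is to reduce to an infinite cyclic subgroup of $N$, use the balanced property to see that commensurating elements conjugate a suitable power to itself or its inverse, and then use uniqueness of roots to remove the dependence on the exponent. The aim is to produce a subgroup $V'$ commensurable with $V$ such that $N_G[V]=N_G(V')$.

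\textbf{Reductions.} If $V$ is finite, then $N_G[V]=G$, and we may take $V'=1$, since $N_G(1)=G$. Assume now that $V$ is infinite. Then $V\cap N$ has finite index in $V$, so it is infinite virtually cyclic and contains an element $h$ of infinite order. Set $C=\langle h\rangle\le N$. Since $C$ is commensurable with $V$, we have $N_G[V]=N_G[C]$. It therefore suffices to find $V'$ commensurable with $C$ whose normalizer is exactly $N_G[C]$.

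\textbf{Key step.} Let $g\in N_G[C]$. By \cref{lemma:commensurator_cyclic}, $g\in N_G(k!C)$ for some $k$, so $g h^{k!} g^{-1} = h^{\pm k!}$. Both $ghg^{-1}$ and $h^{\pm1}$ lie in $N$, because $N$ is normal. Their $k!$-th powers coincide, so uniqueness of roots in $N$ gives $ghg^{-1}=h^{\pm1}$. Hence $N_G[C]\subseteq N_G(C)$. The reverse inclusion is trivial, so $N_G[C]=N_G(C)$ and we take $V'=C$.

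\textbf{Main obstacle.} The only delicate point is that uniqueness of roots is assumed only in $N$, not in $G$. This is why we pass to $h\in N$ first and why normality of $N$ is needed: it guarantees that the conjugate $ghg^{-1}$ also lies in $N$, so the roots can be compared there. If one instead wants a statement about $N_G(V)$ itself, a natural candidate is the finite-index characteristic-type subgroup $V\cap N$. However, the clean statement is the realization $N_G[V]=N_G(C)$ for the cyclic subgroup $C$ commensurable with $V$, and this is what we prove.
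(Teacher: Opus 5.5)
Your proof is correct. You replace $V$ by an infinite cyclic $C=\langle h\rangle\le V\cap N$, which is commensurable with $V$. The balanced property then gives $gh^{m}g^{-1}=h^{\pm m}$ for every $g\in N_G[C]$. Uniqueness of roots in the normal subgroup $N$, which contains both $ghg^{-1}$ and $h^{\pm1}$, upgrades this to $ghg^{-1}=h^{\pm1}$, so $N_G[V]=N_G[C]=N_G(C)$.

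The paper gives no argument of its own here and only cites \cite[Proposition 3.4]{MR3797073}. Your argument is the natural proof of that statement. It also explains the contrast with Proposition~\ref{conditionC:conmen:fg}, which has no roots hypothesis. There, finite generation of $N_G[H]$ is needed to pick a single exponent working for all generators, and one only gets $N_G[H]=N_G(\langle h^{k}\rangle)$. Uniqueness of roots removes the exponent entirely, so no finiteness assumption is needed. In fact one can take $V'=V\cap N$ directly: it is infinite cyclic because $N$ is torsion-free, since $f^n=1=1^n$ forces $f=1$.
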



\section{Preservation of the balanced property}\label{preservation:balanced:property}

In this section we investigate the stability of the balanced property under short exact sequences, direct limits, and graphs of groups. In particular, in this section we prove \cref{graph:groups:condition:C:1}.

\subsection*{Balanced groups and short exact sequences}\label{ses:balanced:property}

The balanced property is not preserved under short exact sequences in general.
Indeed, for $n \ge 2$ the Baumslag--Solitar group
\[
BS(1,n)=\langle a,t \mid t a t^{-1}=a^{n}\rangle
\]
fails to be balanced.
On the other hand, $BS(1,n)$ admits a decomposition as a semidirect product
\[
BS(1,n)\cong \mathbb Z\!\left[\tfrac{1}{n}\right]\rtimes \mathbb Z,
\]
where both factors $\mathbb Z\!\left[\tfrac{1}{n}\right]$ and $\mathbb Z$ are balanced groups.

Nevertheless, we show that under suitable additional assumptions on the kernel, the balanced property is preserved under short exact sequences. To this end, we introduce two stronger conditions ensuring that the balanced property is preserved under short exact sequences.

\begin{definition}
Let \( G \) be a group. We say that \( G \) satisfies \emph{Condition A} if for every automorphism \( \varphi \in \operatorname{Aut}(G) \) and every element \( g \in G \) of infinite order, the relation
\[
\varphi(g^k) = g^\ell
\]
for some \( k, \ell \in \mathbb{Z}\setminus\{0\} \) implies \( |k| = |\ell| \).
\end{definition}
 In \cite[Lemma~5.14(v)]{LW12}, it was proved that polycyclic groups satisfy Condition~A.

The following was proved by Lück-Weirmann in \cite[Proof of Lemma 5.14 item (\textit{v}) ]{LW12}
\begin{lemma}\label{propertyA:sec:LW}
Consider a short exact sequence of  groups
\[
1 \to N \to G \xrightarrow[]{\varphi} Q \to 1.
\]
If \( Q \) satisfies Condition A and \( N \) is characteristic in $G$ and satisfies Condition A, then \( G \) also satisfies Condition A.
\end{lemma}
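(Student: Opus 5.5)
Let $\psi\in\operatorname{Aut}(G)$ and let $g\in G$ have infinite order with $\psi(g^k)=g^\ell$ for some nonzero integers $k,\ell$. The goal is to show $|k|=|\ell|$. Since $N$ is characteristic in $G$, we have $\psi(N)=N$. Hence $\psi$ restricts to an automorphism $\psi|_N\in\operatorname{Aut}(N)$, and it descends to an automorphism $\bar\psi\in\operatorname{Aut}(Q)$ defined by $\bar\psi(\varphi(x))=\varphi(\psi(x))$. This map is well defined because $\psi(N)=N$. It is bijective because $\psi^{-1}$ also preserves $N$ and so induces the inverse. The proof then splits according to the order of the image $\varphi(g)$ in $Q$.

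\textbf{Case 1: $\varphi(g)$ has infinite order in $Q$.} Apply $\varphi$ to the relation to get $\bar\psi(\varphi(g)^k)=\varphi(g)^\ell$ in $Q$. Since $Q$ satisfies Condition A and $\varphi(g)$ has infinite order, this gives $|k|=|\ell|$ directly.

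\textbf{Case 2: $\varphi(g)$ has finite order $m\geq 1$ in $Q$.} Then $g^m\in N$, and $g^m$ has infinite order because $g$ does. Raising the relation to the $m$-th power gives
\[
\psi\bigl((g^m)^k\bigr)=\psi(g^k)^m=(g^\ell)^m=(g^m)^\ell .
\]
This is a relation $\psi|_N(h^k)=h^\ell$ with $h=g^m\in N$ of infinite order and $\psi|_N\in\operatorname{Aut}(N)$. Condition A for $N$ then yields $|k|=|\ell|$.

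The only point requiring care is the construction of the restricted and induced automorphisms, and this is exactly where the hypothesis that $N$ is characteristic is used. A merely normal kernel would not suffice: an arbitrary automorphism of $G$ need neither preserve $N$ nor induce a map on $Q$. The remaining steps are routine, apart from noting that passing to the power $g^m$ does not change the exponents $k$ and $\ell$.
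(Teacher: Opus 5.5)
Your proof is correct and is the standard argument. The paper gives no proof of its own here, only citing Lück--Weiermann \cite{LW12}. It does, however, use exactly your case split in its proof of \cref{sec:conditionC}: either the image in $Q$ has infinite order, or a power of the element lies in $N$. There, as in your argument, the hypothesis on $N$ is what allows the automorphism to restrict to $N$ and, in your setting, to descend to $Q$.
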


\begin{corollary}\label{cor:virtually-surface}
Let \( G \) be a finitely generated virtually surface group. Then
\begin{enumerate}[a)]
    \item \(G\) satisfies Condition~A;
    \item the commensurators of virtually cyclic subgroups of \(G\) are realized as
    normalizers.
\end{enumerate}
\end{corollary}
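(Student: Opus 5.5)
The plan is to get part a) from \cref{propertyA:sec:LW} by passing to a characteristic finite-index surface subgroup, and part b) from \cref{conditionC:and:roots}. Both parts reduce to the same input: fundamental groups of closed surfaces with nonpositive Euler characteristic, and finitely generated free groups, satisfy Condition~A and have unique roots.

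\textbf{Setup.} Let $G$ be finitely generated and let $S\le G$ be a finite-index subgroup with $S\cong\pi_1(\Sigma)$. Here $\Sigma$ is a surface with $\chi(\Sigma)\le 0$; it is either closed or has free fundamental group (finite type, since $S$ is finitely generated).

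\textbf{Step 1: a characteristic kernel.} I would replace $S$ by
\[
N=\bigcap_{[G:K]=[G:S]} K .
\]
This is a finite intersection because $G$ is finitely generated, so $N$ has finite index and is characteristic in $G$. Being a finite-index subgroup of $S$, $N$ is again a surface group (or free) of the same type.

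\textbf{Step 2: Condition~A for the kernel.} We need that $N$ satisfies Condition~A. Suppose $\varphi\in\operatorname{Aut}(N)$ and $\varphi(g^k)=g^\ell$ with $g$ of infinite order. This is where I use that $N$ is hyperbolic, or is $\mathbb Z^2$ or the Klein bottle group. In the hyperbolic case, $\varphi$ is a quasi-isometry of the Cayley graph, so $|\varphi(g^k)|_\infty \asymp |g^k|_\infty$ up to multiplicative constants. That alone does not force $|k|=|\ell|$, so I argue by iteration. We get $\varphi^j(g^{k^j})=g^{\ell^j}$, hence $\tau(g^{\ell^j})\le C\,\tau(g^{k^j})$ for the stable translation length $\tau$, which is positive for infinite-order elements. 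This gives $|\ell|^j\le C|k|^j$ for all $j$, so $|\ell|\le|k|$. Applying the same argument to $\varphi^{-1}$ gives equality.

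For $\mathbb Z^2$, Condition~A holds because $\operatorname{GL}_2(\mathbb Z)$ has determinant $\pm1$: an eigenvector relation $Ag^k=g^\ell$ forces the eigenvalue $\ell/k$ to be a rational algebraic integer whose inverse is also an algebraic integer, so it is $\pm1$. The Klein bottle group is polycyclic, so this case is covered by \cite[Lemma 5.14]{LW12}.

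\textbf{Step 3: Condition~A for the quotient and part a).} The quotient $Q=G/N$ is finite, so it satisfies Condition~A vacuously, having no elements of infinite order. \cref{propertyA:sec:LW} then gives a).

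\textbf{Step 4: part b).} Condition~A with $\varphi$ inner is exactly the balanced condition, so $G$ is balanced. The subgroup $N$ is a finite-index normal subgroup with unique roots: hyperbolic surface groups and free groups are torsion-free, and their centralizers of nontrivial elements are cyclic, so $f^n=g^n$ puts $f$ and $g$ in a common cyclic centralizer, and torsion-freeness gives $f=g$. For $\mathbb Z^2$, unique roots is clear. The Klein bottle group fails unique roots, so in that case I would pass to its $\mathbb Z^2$ subgroup of index two, again taking the characteristic core in $G$. Then \cref{conditionC:and:roots} yields b).

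\textbf{Main obstacle.} The step I expect to be hardest is the Condition~A argument for the hyperbolic kernel. Care is needed because the translation-length bound only gives a multiplicative constant, and the iteration trick is what removes it.
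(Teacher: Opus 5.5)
\textbf{There is a genuine gap, in the hyperbolic case of Step~2.} Your overall route is the paper's: pass to a characteristic finite-index surface subgroup $N$ and apply \cref{propertyA:sec:LW} for a). Then use balancedness together with a finite-index normal subgroup with unique roots and \cref{conditionC:and:roots} for b). Steps 1, 3 and 4, and the $\Z^2$ and Klein bottle cases of Step~2, are fine. Your treatment of the Klein bottle group in b) is in fact more careful than the paper's: there $(ab)^2=b^2$ breaks unique roots, and the paper simply asserts that the surface subgroup has unique roots. The gap sits exactly at the step you flagged as the main obstacle.

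\textbf{Why the iteration fails.} The bound $\tau(\varphi^j(x))\le C\,\tau(x)$ holds with $C$ equal to the Lipschitz constant of $\varphi^j$ for the word metric, and that constant depends on $j$. The general bound is $C_j\le L^j$ with $L=\max_{s\in S}|\varphi(s)|_S$. This exponential growth really occurs, for instance for $a\mapsto ab,\ b\mapsto a$ on $F_2$, or for a pseudo-Anosov automorphism of a closed surface group. Plugging $C_j=L^j$ into $|\ell|^j\tau(g)\le C_j|k|^j\tau(g)$ only returns $|\ell|\le L|k|$, so iterating removes nothing. More fundamentally, coarse information such as ``$\varphi$ is a quasi-isometry'' cannot detect the specific relation $\varphi(g^k)=g^\ell$. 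What you need is that $\varphi$ preserves the cyclic subgroup through $g$.

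\textbf{How to fix it.} Use the fact you already invoke in Step~4. In a torsion-free hyperbolic group (closed hyperbolic surface groups, finitely generated free groups), the centralizer of any $x\ne 1$ is infinite cyclic, and $C(x^m)=C(x)$ for $m\ne 0$. Hence
\[
\varphi(C(g))=\varphi(C(g^k))=C(\varphi(g^k))=C(g^\ell)=C(g),
\]
so $\varphi$ restricts to an automorphism of $C(g)=\langle r\rangle\cong\Z$, that is, $\varphi(r)=r^{\pm1}$. Writing $g=r^m$ gives $r^{\pm mk}=r^{m\ell}$, hence $|k|=|\ell|$. This is precisely the paper's argument that Condition MAX implies Condition~A. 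It is the natural source of Condition~A for the kernel $N$, which the paper's proof of a) uses implicitly without stating it.
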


\begin{proof}
We first prove item~(a). Let \(H\le G\) be a surface subgroup of finite index and consider
\[
N = \bigcap_{\varphi \in \Aut(G)} \varphi(H).
\]
Since \(G\) is finitely generated, it has only finitely many subgroups of a given finite
index. Consequently, the above intersection is taken over finitely many subgroups, and
hence \(N\) has finite index in both \(G\) and \(H\). In particular, \(N\) is itself a
surface group. Moreover, by construction \(N\) is characteristic in \(G\), as it is
invariant under all automorphisms of \(G\). The claim now follows from
Lemma~\ref{propertyA:sec:LW}, applied to the characteristic finite-index subgroup \(N\).

For item~(b), note that the finite-index surface subgroup may be chosen to be normal in
\(G\). The conclusion then follows from item~(a) together with
\cref{conditionC:and:roots}.
\end{proof}

\medskip

\begin{remark}
The same argument applies to the fundamental group of a
closed hyperbolic \(3\)-manifold.

\end{remark}

Our next result can be viewed as an analogue of \cref{propertyA:sec:LW} for balanced groups. In contrast with the situation for Condition~A, we show that the
characteristic assumption on the kernel can be dropped, provided that the kernel
satisfies the stronger Condition~A.
\begin{theorem}\label{sec:conditionC}
Consider a short exact sequence of groups
\[
1 \to N \to G \xrightarrow[]{p} Q \to 1.
\]
If \( Q \) is balanced and \( N \) satisfies Condition~A, then \( G \) is balanced.
\end{theorem}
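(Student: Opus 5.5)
We take $g,h\in G$ with $|h|=\infty$ and $k,l\neq 0$ such that $gh^kg^{-1}=h^l$, and we aim to show $|k|=|l|$. The argument splits according to whether $p(h)$ has infinite order in $Q$.

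\emph{Case 1: $p(h)$ has infinite order in $Q$.} Apply $p$ to the relation to get
\[
p(g)\,p(h)^k\,p(g)^{-1}=p(h)^l .
\]
Since $Q$ is balanced and $p(h)$ has infinite order, this gives $|k|=|l|$ at once.

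\emph{Case 2: $p(h)$ has finite order $m\geq 1$ in $Q$.}

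First, pass to a power of $h$ lying in $N$. Set $h_0=h^m\in N$; it has infinite order because $h$ does. Raising the relation to the $m$-th power gives $gh^{km}g^{-1}=h^{lm}$, that is,
\[
g\,h_0^{k}\,g^{-1}=h_0^{l}.
\]

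Second, turn this into a statement about an automorphism of $N$. Since $N$ is normal in $G$, conjugation by $g$ restricts to an automorphism $\varphi=c_g|_N\in\operatorname{Aut}(N)$. The displayed relation reads $\varphi(h_0^k)=h_0^l$.

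Third, apply Condition~A for $N$ to $\varphi$ and $h_0$; this yields $|k|=|l|$.

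The key observation is that Condition~A is formulated for arbitrary automorphisms, not only inner ones. This is exactly what allows conjugation by an element of $G$ outside $N$ to be handled, and it is why no characteristic hypothesis on $N$ is needed. I expect no real obstacle beyond checking that the exponents survive the passage to $m$-th powers. This holds because $(h^k)^m=(h^m)^k$, and conjugation commutes with taking powers.
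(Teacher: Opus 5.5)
Your proof is correct and follows essentially the same route as the paper. You split on whether $p(h)$ has infinite order and use balancedness of $Q$ in the first case. In the second case you pass to $h^m\in N$ and apply Condition~A to the automorphism of $N$ given by conjugation by $g$; your explicit check that $g(h^m)^k g^{-1}=(h^m)^l$ fills in a step the paper leaves implicit.
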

\begin{proof}
Let \( g,h \in G \) with \( |h| = \infty \) and \( k,l \in \mathbb{Z}\setminus\{0\} \) such that \( gh^k g^{-1} = h^l \).
 We have two cases $p(h)$ has infinite order or not. In the first case we have $p(g)p(h)^k p(g)^{-1} = p(h)^l$  the claim follows from the fact that $Q$ is balanced.

 In the second case we have that $p(h)^n=1$ for some integer $n$.  As \( N \) is normal in \( G \), conjugation by \( g \) leaves $N$ invariant. This implies that conjugation by $g$ induces an automorphism \(\psi: N \to N\). Then $\psi(h^n)^k=(h^n)^l$, since $N$ satisfies Condition A it follows that $|k|=|l|$.
 \end{proof}


\begin{definition}[Condition MAX]
We say that a group $G$ satisfies \emph{Condition MAX} if for every infinite cyclic
subgroup $C \le G$ there exists a \emph{unique} maximal virtually cyclic subgroup
$M \le G$ such that $C \le M$.
\end{definition}

\begin{remark}\label{remark:examples MAX}
    \begin{enumerate}
        \item Note that any hyperbolic group has property (MAX) (see \cite[Chapitre 8, Théorème 30]{MR1086648}).
        \item Free groups of arbitrary rank.
        \item Free abelian groups of arbitrary rank.
        
    \end{enumerate}
\end{remark}

\begin{lemma}\label{virtually:Z:conditionA}
Let $G$ be a virtually cyclic group. Then $G$ satisfies Condition~A.
\end{lemma}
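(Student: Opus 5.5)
Let $G$ be virtually cyclic, $\varphi\in\operatorname{Aut}(G)$, and $g\in G$ of infinite order with $\varphi(g^k)=g^\ell$ for some $k,\ell\neq 0$. The goal is $|k|=|\ell|$. The plan is to measure the ``size'' of an infinite-order element in a canonical way that any automorphism must preserve. Since $G$ contains an element of infinite order, $G$ is infinite, so there is a short exact sequence
\[
1\to F\to G\to Q\to 1
\]
with $F$ the unique maximal finite normal subgroup and $Q\cong\mathbb Z$ or $Q\cong D_\infty$. The subgroup $F$ is characteristic: it is the largest finite normal subgroup, and an automorphism sends finite normal subgroups to finite normal subgroups. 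Therefore $\varphi$ descends to an automorphism $\bar\varphi$ of $Q$.

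First reduce to the case $Q\cong\mathbb Z$. If $Q\cong D_\infty$, let $C\le Q$ be its unique infinite cyclic subgroup of index $2$; it is characteristic in $D_\infty$, because it is the subgroup generated by all elements of infinite order. Let $G_0$ be the preimage of $C$ in $G$. Then $G_0$ is characteristic of index $2$ in $G$, it contains every infinite-order element of $G$ (elements outside $G_0$ map to reflections, which have order $2$), and $G_0/F\cong\mathbb Z$. Since $g$ has infinite order, $g\in G_0$, and we may replace $G$ by $G_0$ and $\varphi$ by its restriction to $G_0$.

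Now assume $Q\cong\mathbb Z$ and let $\pi\colon G\to\mathbb Z$ be the projection. The automorphism $\bar\varphi$ of $\mathbb Z$ is $\pm\mathrm{id}$, and $\pi\circ\varphi=\bar\varphi\circ\pi$. Applying $\pi$ to $\varphi(g^k)=g^\ell$ gives
\[
\pm k\,\pi(g)=\ell\,\pi(g).
\]
Since $g$ has infinite order and $F$ is finite, $\pi(g)\neq 0$. Hence $\ell=\pm k$, so $|k|=|\ell|$.

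The only point that needs care is the existence of the canonical finite normal subgroup $F$ with quotient $\mathbb Z$ or $D_\infty$, together with its characteristic nature. I would cite the standard structure theorem for virtually cyclic groups. If a self-contained argument is wanted instead: pick an infinite cyclic normal subgroup $Z$ of finite index (take the normal core of an infinite cyclic finite-index subgroup). The set of elements of $G$ acting by conjugation on $Z$ with finite orbits of order dividing a bounded number gives the torsion-by-$\mathbb Z$ structure. Alternatively, avoid $F$ altogether by taking the characteristic subgroup $\bigcap_{\psi\in\operatorname{Aut}(G)}\psi(Z)$. This intersection is infinite cyclic of finite index, since a finitely generated group has only finitely many subgroups of a given index. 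Then use that $\varphi$ restricted to an infinite cyclic group is $\pm\mathrm{id}$, applied to a power $g^m$ lying in it: from $\varphi(g^{mk})=g^{m\ell}$ one gets $\pm g^{mk}=g^{m\ell}$ additively, hence $|k|=|\ell|$. This second route is shorter and is the one I would actually write.
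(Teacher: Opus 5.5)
Your second route, the one you say you would write, is correct and is essentially the paper's proof. The paper also takes a characteristic infinite cyclic subgroup $H$ of finite index, obtained from finite generation just like your $\bigcap_{\psi}\psi(Z)$, and concludes from \cref{propertyA:sec:LW} applied to $1\to H\to G\to G/H\to 1$; in this case that lemma amounts to your computation on a power $g^m\in H$ using $\Aut(\Z)=\{\pm\mathrm{id}\}$. Your first route, via the maximal finite normal subgroup with quotient $\Z$ or $D_\infty$, is also valid once the standard structure theorem is cited, but your ``self-contained'' sketch of that structure via conjugation orbits on $Z$ does not establish it and should be dropped.
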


\begin{proof}
If $G$ is finite, the statement is immediate. Suppose therefore that $G$ is infinite and virtually cyclic. Then $G$ contains a finite-index infinite cyclic subgroup. Since $G$ is finitely generated, then  $G$ contains a characteristic subgroup $H \cong \mathbb{Z}$ of finite index.

Thus we obtain a short exact sequence
\[
1 \longrightarrow H \longrightarrow G \longrightarrow G/H \longrightarrow 1,
\]
where $H \cong \mathbb{Z}$ and $G/H$ is finite. The result now follows from \cref{propertyA:sec:LW}.
\end{proof}

\begin{theorem}\label{proposition:MAX implies A}
Let $G$ be a group satisfying Condition~MAX. Then $G$ satisfies Condition~A.
\end{theorem}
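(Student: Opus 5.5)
Fix an automorphism $\varphi\in\operatorname{Aut}(G)$, an element $g\in G$ of infinite order, and nonzero integers $k,\ell$ with $\varphi(g^k)=g^\ell$. The plan is to transfer everything into a single virtually cyclic subgroup and then invoke \cref{virtually:Z:conditionA}. Put $C=\langle g^k\rangle$. By Condition~MAX there is a unique maximal virtually cyclic subgroup $M\le G$ containing $C$. Since $\langle g\rangle$ is virtually cyclic and contains $C$, it lies in some maximal virtually cyclic subgroup containing $C$, so by uniqueness $g\in M$.

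\emph{Step 1: $\varphi(M)=M$.} Automorphisms send virtually cyclic subgroups to virtually cyclic subgroups and preserve inclusions, so $\varphi(M)$ is a maximal virtually cyclic subgroup of $G$. It contains $\varphi(C)=\langle \varphi(g^k)\rangle=\langle g^\ell\rangle$. Now compare with $M$ itself: $M$ contains $g$, hence contains $\langle g^\ell\rangle$.

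The subgroup $\langle g^\ell\rangle$ is infinite cyclic, because $g$ has infinite order. Condition~MAX therefore gives a unique maximal virtually cyclic subgroup containing it. Both $M$ and $\varphi(M)$ are such subgroups, so $\varphi(M)=M$. (If one wants to avoid applying MAX at $\langle g^\ell\rangle$, one can argue at $\langle g\rangle$ instead, but the argument above is direct.)

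\emph{Step 2: restrict and conclude.} By Step~1, $\varphi$ restricts to an automorphism $\varphi|_M\in\operatorname{Aut}(M)$. Inside $M$ we have $g$ of infinite order and $\varphi|_M(g^k)=g^\ell$. Since $M$ is virtually cyclic, \cref{virtually:Z:conditionA} shows that $M$ satisfies Condition~A, and hence $|k|=|\ell|$.

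The only delicate point is Step~1: proving that $\varphi$ really preserves $M$. This needs the image of a maximal virtually cyclic subgroup under an automorphism to again be maximal, which holds because $\varphi^{-1}$ also preserves virtual cyclicity. It also needs that $g$, $g^k$ and $g^\ell$ all determine the same maximal subgroup $M$, and this is exactly where the uniqueness in Condition~MAX is used.
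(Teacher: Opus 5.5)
Your proof is correct and follows essentially the same route as the paper: you show $\varphi(M)=M$ from the uniqueness of the maximal virtually cyclic subgroup containing $\langle g^\ell\rangle$, then restrict $\varphi$ to $M$ and apply \cref{virtually:Z:conditionA}. The only cosmetic difference is that you define $M$ from $\langle g^k\rangle$ and then deduce $g\in M$, whereas the paper defines $M$ directly as the maximal virtually cyclic subgroup containing $\langle g\rangle$.
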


\begin{proof}
Let $h \in G$ be an element of infinite order, and let $\varphi \in \Aut(G)$ such
that
\[
\varphi(h^m) = h^n
\]
for some $m,n \in \mathbb{Z} \setminus \{0\}$.
By hypothesis, the infinite cyclic subgroup $\langle h \rangle$ is contained in a
unique maximal virtually cyclic subgroup $M \le G$.

Since $\varphi$ is an automorphism, the image $\varphi(M)$ is again a maximal
virtually cyclic subgroup of $G$. Moreover,
\[
\langle h^n \rangle
= \varphi(\langle h^m \rangle)
\le \varphi(M).
\]
Given that $\langle h^n \rangle \leq \langle h \rangle \leq M$, by uniqueness of the maximal virtually cyclic subgroup containing
$\langle h^n \rangle$, it follows that
\[
\varphi(M) = M.
\]
In particular, $\varphi$ restricts to an automorphism
\[
\varphi|_M \in \Aut(M).
\]

Since $M$ is virtually cyclic,  it follows from \cref{virtually:Z:conditionA} that $M$
satisfies Condition~A. Applying this condition to the equality
$\varphi(h^m) = h^n$ inside $M$, we conclude that
\[
|m| = |n|.
\]
\end{proof}

 \begin{corollary}\label{propertyC:sec}
Consider a short exact sequence of  groups
\[
1 \to N \to G \xrightarrow[]{\varphi} Q \to 1.
\]
If \( Q \)  is balanced and \( N \) has 
property (MAX), then \( G \) is balanced.
\end{corollary}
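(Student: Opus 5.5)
The corollary should follow by chaining two earlier results. By \cref{proposition:MAX implies A}, a group with Condition~MAX satisfies Condition~A. So the kernel $N$ satisfies Condition~A. The quotient $Q$ is balanced by hypothesis. Hence \cref{sec:conditionC} applies directly to the short exact sequence
\[
1 \to N \to G \xrightarrow{\varphi} Q \to 1
\]
and gives that $G$ is balanced.

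For completeness, I would also spell out the mechanism, so it is clear that no characteristic or invariance hypothesis on $N$ is needed. Suppose $g h^k g^{-1} = h^l$ with $|h| = \infty$ and $k,l \neq 0$. There are two cases, according to the order of $\varphi(h)$.
\begin{enumerate}
\item If $\varphi(h)$ has infinite order, apply $\varphi$ to the relation and use that $Q$ is balanced.
\item If $\varphi(h)$ has finite order $n$, then $h^n \in N$ has infinite order. Conjugation by $g$ restricts to an automorphism $\psi$ of $N$, since $N$ is normal. We have $\psi\bigl((h^n)^k\bigr) = (h^n)^l$.
\end{enumerate}

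In the second case, Condition~A for $N$ is exactly what is required, and it comes from Condition~MAX. The argument uses the unique maximal virtually cyclic subgroup $M \supseteq \langle h^n \rangle$ of $N$. Because $\psi$ is an automorphism of $N$, the uniqueness gives $\psi(M) = M$. Then \cref{virtually:Z:conditionA} inside $M$ yields $|k| = |l|$.

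The only point needing care is that Condition~MAX is intrinsic to $N$. The relevant automorphism is the restriction of an inner automorphism of $G$, which is an automorphism of $N$, so \cref{proposition:MAX implies A} applies verbatim. There is no real obstacle; the proof is a two-line citation of \cref{proposition:MAX implies A} followed by \cref{sec:conditionC}.
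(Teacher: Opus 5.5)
Your proposal is correct and is exactly the intended argument: the paper gives no separate proof of this corollary, since it is the immediate combination of the theorem that Condition~MAX implies Condition~A with \cref{sec:conditionC}. Your case analysis just reproduces the proofs of those two results. In case 2, uniqueness of the maximal virtually cyclic subgroup is applied to $\langle h^{nl}\rangle$, which lies in both $M$ and $\psi(M)$.
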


In the following sections we present several applications of \cref{sec:conditionC} and \cref{propertyC:sec}. 
For instance, from \cref{sec:conditionC} we deduce that the fundamental group of a Seifert 3-manifold is balanced (see \cref{conditionC:Seifert}). 
Similarly, using \cref{propertyC:sec} we show that normally poly-hyperbolic groups are balanced (see \cref{hyperbolic:groups:propertyC}).

\subsection*{Balanced groups and direct limits}
In this subsection we study the behavior of the balanced property under direct limits of groups.  We consider direct limits indexed by directed sets in the
following sense.

\begin{definition}
A partially ordered set $(I,\le)$ is called \emph{directed} if for any
$\alpha,\beta\in I$ there exists $\gamma\in I$ such that
\[
\alpha\le \gamma \quad \text{and} \quad \beta\le \gamma.
\]
\end{definition}

\begin{proposition}\label{balanced:direc:limits}
Let $(I,\le)$ be a directed set (i.e., for any $\alpha,\beta\in I$ there exists $\gamma\in I$ with $\gamma\ge \alpha,\beta$), and let
\[
G = \varinjlim_{i\in I} G_i
\]
be the direct limit of a directed system of groups $\{G_i\}_{i\in I}$, each of which is balanced. Then $G$ is balanced.
\end{proposition}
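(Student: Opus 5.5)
The argument is a direct lifting: a single conjugacy relation in the direct limit involves only finitely many elements, so it can be pulled back to some $G_i$, where the balanced property applies. Write $\iota_i\colon G_i\to G$ for the canonical maps and $\varphi_{ij}\colon G_i\to G_j$ ($i\le j$) for the structure maps. I will use the standard description of a directed colimit of groups: every element of $G$ equals $\iota_i(x)$ for some $i$ and $x\in G_i$, and $\iota_i(x)=\iota_i(y)$ holds if and only if $\varphi_{ij}(x)=\varphi_{ij}(y)$ for some $j\ge i$. The maps $\varphi_{ij}$ need not be injective, and this is the only point that requires care.

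Suppose $g,h\in G$ with $|h|=\infty$ and $k,l\in\mathbb{Z}\setminus\{0\}$ satisfy $gh^kg^{-1}=h^l$. Since $I$ is directed, I choose a single index $i$ together with $x,y\in G_i$ such that $\iota_i(x)=g$ and $\iota_i(y)=h$. The elements $xy^kx^{-1}$ and $y^l$ of $G_i$ then have the same image in $G$. By the colimit description there is some $j\ge i$ with
\[
\varphi_{ij}(x)\,\varphi_{ij}(y)^k\,\varphi_{ij}(x)^{-1}=\varphi_{ij}(y)^l
\]
in $G_j$.

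To apply the balanced property in $G_j$, I need $\varphi_{ij}(y)$ to have infinite order. If it had finite order $m\geq 1$, then $h^m=\iota_j(\varphi_{ij}(y))^m=\iota_j(\varphi_{ij}(y)^m)=1$, contradicting $|h|=\infty$. So $\varphi_{ij}(y)$ has infinite order. Since $G_j$ is balanced, the displayed relation gives $|k|=|l|$, and therefore $G$ is balanced.

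The main obstacle is the possible non-injectivity of the structure maps. Because of it, the relation in $G$ can only be realised after pushing forward to a larger index $j$, and the infinite-order condition has to be checked at that index. Both issues are handled by the argument above. Everything else is routine bookkeeping with the directed set.
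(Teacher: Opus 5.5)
Your proof is correct and follows essentially the same route as the paper's. You lift $g,h$ to a common index, push forward to an index where the conjugacy relation already holds, note that the image of $h$ there still has infinite order, and apply balancedness in that group. Your explicit use of the colimit description to justify the existence of such an index, allowing for non-injective structure maps, spells out what the paper's proof asserts in one line.
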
 

\begin{proof}
Let $h\in G$ be of infinite order and suppose there exists $g\in G$ and integers $k,\ell\in\mathbb Z\setminus\{0\}$ such that
\[
gh^{k}g^{-1}=h^{\ell}\quad\text{in }G.
\]
We must show that $|k|=|\ell|$. Pick indices $\alpha,\beta\in I$ and representatives $h_\alpha\in G_\alpha$, $g_\beta\in G_\beta$ mapping to $h,g$ in $G$. 
By the directedness of $(I,\le)$ there is $\gamma\ge \alpha,\beta$ such that the equality
\[
g_\gamma h_\gamma^{k}g_\gamma^{-1}=h_\gamma^{\ell}
\]
already holds in $G_\gamma$, where $h_\gamma,g_\gamma$ denote the images of $h_\alpha,g_\beta$ in $G_\gamma$, note that $h_\gamma$ has infinite order, otherwise $h$ would have finite order in $G$, a contradiction. Since $G_\gamma$ is balanced, it follows that $|k|=|\ell|$. This proves that $G$ is balanced.
\end{proof}

\begin{remark}
Every group is the direct limit of its finitely generated subgroups. Hence, if a group $G$ is locally balanced, that is, every finitely generated subgroup of $G$ is balanced, then $G$ itself is balanced. In particular, this applies to classes built locally from groups known to be balanced, such as free groups, locally hyperbolic groups, and locally finite groups (the latter vacuously, since they contain no elements of infinite order).
\end{remark}

\begin{proposition}\label{prop:nilpotent is balanced}
Every virtually nilpotent group is balanced.
\end{proposition}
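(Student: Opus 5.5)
By \cref{conditionC:finite:extension} a group containing a balanced finite-index subgroup is balanced, so it suffices to prove that every nilpotent group is balanced. The balanced condition only involves finitely many elements at a time, so by the remark following \cref{balanced:direc:limits} (every group is the direct limit of its finitely generated subgroups, and subgroups of nilpotent groups are nilpotent) we may further assume that $G$ is finitely generated nilpotent. The plan is then to show directly that for $h\in G$ of infinite order, $gh^kg^{-1}=h^l$ with $k,l\neq 0$ forces $|k|=|l|$.

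The main tool is the Mal'cev theory of finitely generated nilpotent groups. The torsion elements form a finite normal subgroup $T$, and $G/T$ is torsion-free nilpotent. Since $T$ is finite, $h$ has infinite order in $G/T$, and the relation descends. So we may assume $G$ is torsion-free.

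For torsion-free finitely generated nilpotent $G$ I see two routes.

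\emph{Route 1: uniqueness of roots.} Torsion-free nilpotent groups have unique roots, and their centralizers of powers coincide: $C(x^n)=C(x)$. Set $y=gh^kg^{-1}=(ghg^{-1})^k$. Then $y=h^l$, so $ghg^{-1}$ commutes with $h^l$, hence with $h$. Therefore $ghg^{-1}$ and $h$ generate an abelian torsion-free subgroup, in which $(ghg^{-1})^k=h^l$. I would rather use a cleaner invariant, however.

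\emph{Route 2: Mal'cev completion (preferred).} Embed $G$ as a lattice in a simply connected nilpotent Lie group $N$, and write $h=\exp(X)$ with $X\neq 0$. The relation becomes $\mathrm{Ad}(g)(kX)=lX$, so $X$ is an eigenvector of $\mathrm{Ad}(g)$ with eigenvalue $l/k$. Since $\mathrm{Ad}(g)=\exp(\mathrm{ad}\,\log g)$ and $\mathrm{ad}$ of an element of a nilpotent Lie algebra is nilpotent, $\mathrm{Ad}(g)$ is unipotent. Its only eigenvalue is therefore $1$, which gives $k=l$. This argument shows more: in a torsion-free nilpotent group a conjugation relation forces $k=l$ exactly.

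To avoid Lie theory, there is an elementary alternative by induction on the nilpotency class using the upper central series. Let $Z=Z(G)$. If the image of $h$ in $G/Z$ has infinite order, apply induction; $G/Z$ is again torsion-free, since upper central quotients of torsion-free nilpotent groups are torsion-free. Otherwise $h^m\in Z$ for some $m\neq 0$. Raising the relation to the $m$-th power gives $gh^{km}g^{-1}=h^{lm}$, and since $h^{km}$ is central the left side equals $h^{km}$. Hence $h^{km}=h^{lm}$, and because $h$ has infinite order, $k=l$.

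The step I expect to be the main obstacle is exactly this torsion-freeness of $G/Z$ in the inductive route, together with the passage to torsion-free quotients. These are standard facts I would cite rather than reprove. With them in hand the induction is very short, and I would present that version.
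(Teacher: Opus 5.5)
Your proof is correct, and it departs from the paper's only in how the nilpotent case is finished. The paper makes the same first two reductions: to nilpotent groups via \cref{conditionC:finite:extension}, then to finitely generated ones via \cref{balanced:direc:limits}. It then concludes by citation: finitely generated nilpotent groups are polycyclic, and polycyclic groups are balanced (indeed they satisfy Condition~A, as recalled in \cref{preservation:balanced:property}). You instead prove the nilpotent case by hand, either through the Mal'cev embedding, where $\mathrm{Ad}(g)$ is unipotent, or by induction along the upper central series. Both arguments are sound, and both give the sharper conclusion $k=l$ rather than $|k|=|l|$. The paper's route is shorter and covers the larger class of polycyclic groups, but it rests on an external result; yours is self-contained and elementary. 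One simplification: the step you flag as the main obstacle is not actually needed. The central case ($h^m\in Z$) uses only that $h$ has infinite order in $G$. The other case uses only that $hZ$ has infinite order in $G/Z$ and that $G/Z$ has smaller nilpotency class. So you can run the induction on class over all nilpotent groups, with class $0$ vacuous. This removes the need for torsion-freeness of $G/Z$, for the passage to $G/T$, and even for the reduction to finitely generated groups.
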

\begin{proof}
By \cref{conditionC:finite:extension}, it suffices to prove the claim for nilpotent groups.  
Since every group is the direct limit of its finitely generated subgroups, it follows from \cref{balanced:direc:limits} that it is enough to show that every finitely generated nilpotent group is balanced.  

It is well-known that every finitely generated nilpotent group is polycyclic (see for example \cite[Theorem 1.8]{pengitore2025}), and polycyclic groups are balanced. The result follows.
\end{proof}

\subsection*{Balanced groups and graph of groups}

Being balanced is not necessarily preserved under graphs of groups. More precisely, even if all vertex groups and edge groups are balanced, the fundamental group of the graph of groups need not be balanced. A concrete example is given by the Baumslag--Solitar group
\[
BS(1,n)=\langle a,t \mid t a t^{-1}=a^{n}\rangle,
\]
which can be seen as the fundamental group of a loop of groups where all vertex and edge groups are $\mathbb{Z}$.

Thus, we require more hypotheses to produce balanced groups via graph of groups. For the sake of completeness, we recall some basic notions on isometries of a tree, needed for the formulation of the proposition.

\subsection*{Bass-Serre Theory}

We recall some basic concepts from Bass-Serre theory; for further details, see \cite{Se03}. In general, Bass-Serre theory studies groups via their actions on trees and one of its more iconic tools is what is called a graph of groups.  To see this, first we define a graph as a tuple $\Gamma = (V,E, \partial_{0}, \overline{\cdot})$ with $V$ and $E$ being sets (whose elements are called vertices and edges respectively), $\partial_{0}: E \to V$ is the \emph{initial vertex} map, and $\overline{\cdot}: E \to E$ is an involution without fixed points called the \emph{inverse edge} map. With this we can also define $\partial_{1} := \partial_{0}(\overline{\cdot})$ as the \emph{terminal vertex} map.

Given a graph $\Gamma$, a \emph{path} is a finite (possibly empty) sequence $(e_{1}, \ldots, e_{n})$ where for all $i$ we have that $e_{i}$ is an edge and $\partial_{1}(e_{i}) = \partial_{0}(e_{i+1})$. Then, $\Gamma$ is \emph{connected} if for any vertices $u,v \in V$ there exists a path $c = (e_{1}, \ldots, e_{n})$ such that $u = \partial_{0}(e_{1})$ and $v = \partial_{1}(e_{n})$.

A path $c = (e_{1}, \ldots, e_{n})$ is \emph{closed} if $\partial_{1}(e_{n}) = \partial_{0}(e_{1})$. Also, $c$ has \emph{backtracking} if there exists $i$ such that $\overline{e_{i}} = e_{i+1}$.

Now, a graph $\Gamma$ is simplicial if for all $u,v \in V$, the set $\{e \in E: \partial_{0}(e)=u, \partial_{1}(e) = v\}$ has cardinality at most one.

Then a \emph{tree} in this context is a non-empty, connected and simplicial graph $T$ such that any closed path is either constant or has backtracking.

Given a group $G$ and a tree $T$, we say that \emph{$G$ acts (without inversion) on $T$} if there exist an action of $G$ on $V$ and an action of $G$ on $E$ such that for all $g \in G$ and all $e \in E$ we have:
\begin{enumerate}
    \item $\partial_{0}(g \cdot e) = g \cdot \partial_{0}(e)$,
    \item $\overline{g \cdot e} = g \cdot \overline{e}$, and
    \item $g \cdot e \neq \overline{e}$.
\end{enumerate}

Given one such action of $G$ on $T$, we can summarize this action as follows: A \emph{graph of groups} $\mathcal{G}$ is a tuple $(\Gamma, (G_{v})_{v \in V}, (G_{e})_{e \in E}, (\varphi_{e})_{e \in E})$ such that:
\begin{itemize}
    \item $\Gamma$ is a graph,
    \item for all $v \in V$, $G_{v}$ is a group,
    \item for all $e \in E$, $G_{e}$ is a group and $G_{e} = G_{\overline{e}}$, and
    \item for all $e \in E$, $\varphi_{e}:G_{e} \to G_{v}$ is a monomorphism.
\end{itemize}

If $G$ is a group acting on a tree $T$, then there is a natural graph of groups with underlying graph the quotient graph $G\backslash\Gamma$, fixing a choice of representatives of conjugacy classes of vertex stabilizers, doing similarly for edge stabilizers, and letting the morphisms be the natural inclusion maps possibly pre-composed by the ad hoc conjugation.

Moreover, one can recover the tree, the group and the action from the graph of groups. Given a graph of groups $\mathcal{G}$, there are two objects associated to $\mathcal{G}$, its fundamental group $\pi_{1}(\mathcal{G})$ and its Bass-Serre tree $T_{\mathcal{G}}$. With this, we have the following structure theorem.

\begin{theorem}[Theorem 13 in \cite{Se03}]
    Let $G$ be a group and $T$ be a tree such that $G$ acts without inversion on $T$. Then, there is an isomorphism $\psi: \pi_{1}(\mathcal{G}) \to G$, and $T_{\mathcal{G}}$ is $\psi$-equivariantly isomorphic to $T$, where $\mathcal{G}$ is the graph of groups associated to the action of $G$ on $T$.
\end{theorem}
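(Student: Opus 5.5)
The statement above is the structure theorem of Bass--Serre theory. The plan is to build the graph of groups $\mathcal{G}$ explicitly from the action, write down a homomorphism $\psi\colon \pi_1(\mathcal{G})\to G$ together with a $\psi$-equivariant map of trees $T_{\mathcal{G}}\to T$, and show both are isomorphisms. The main tool is the local structure of the two trees at vertices.

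\textbf{Step 1: fundamental domain and the graph of groups.} Let $\Gamma=G\backslash T$, which is a graph because the action is without inversion. Choose a maximal tree $\Lambda\subseteq\Gamma$ and lift it isomorphically to a subtree $\widetilde\Lambda\subseteq T$; this is possible by lifting edge by edge along $\Lambda$. For every edge $e$ of $\Gamma$ not in $\Lambda$, choose a lift $\tilde e$ with $\partial_0\tilde e\in\widetilde\Lambda$. Then choose $g_e\in G$ with $g_e^{-1}\partial_1\tilde e\in\widetilde\Lambda$. For edges in $\Lambda$ put $g_e=1$, and arrange $g_{\bar e}=g_e^{-1}$. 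Set $G_v=\mathrm{Stab}(\tilde v)$ and $G_e=\mathrm{Stab}(\tilde e)$. Let $\varphi_e$ be the inclusion into $G_{\partial_0 e}$, or the inclusion followed by conjugation by $g_e^{-1}$ at the terminal end.

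\textbf{Step 2: the homomorphism $\psi$.} I use the presentation of $\pi_1(\mathcal{G},\Lambda)$ generated by the $G_v$ and stable letters $t_e$. Its relations are $t_{\bar e}=t_e^{-1}$, $t_e=1$ for $e\in\Lambda$, and $t_e\varphi_{\bar e}(a)t_e^{-1}=\varphi_e(a)$. Define $\psi$ to be the inclusion on each $G_v$ and $t_e\mapsto g_e$. The relations hold by construction, so $\psi$ is well defined.

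\textbf{Step 3: the tree map.} $T_{\mathcal{G}}$ has vertices $\bigsqcup_v \pi_1(\mathcal{G})/G_v$ and edges $\bigsqcup_e\pi_1(\mathcal{G})/G_e$. Define $f(xG_v)=\psi(x)\tilde v$ and $f(xG_e)=\psi(x)\tilde e$. This is well defined and $\psi$-equivariant, and it respects incidence because of how the $g_e$ were chosen.

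\textbf{Step 4: $f$ is locally bijective.} It suffices to check this at the base vertices $\tilde v$. Edges of $T$ starting at $\tilde v$ are partitioned by $G$-orbit, that is, by edges $e$ of $\Gamma$ with $\partial_0 e=v$. Within one such orbit they are in bijection with $G_v/\varphi_e(G_e)$, and this is exactly the description of the star of the vertex $G_v$ in $T_{\mathcal{G}}$. I expect this to be the main obstacle. One must carefully check that edges whose terminal vertex lies at $v$ via the twisting $g_e$ are matched correctly, and that orientation conventions (no inversions) prevent double counting.

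\textbf{Step 5: conclusion.} $T_{\mathcal{G}}$ is a tree by the standard normal form theorem for $\pi_1(\mathcal{G})$. $f$ is a locally bijective morphism of graphs, so it has no folding and its image is open and closed. Since $T$ is connected, $f$ is surjective, and a locally bijective map from a tree onto a tree is injective because a reduced path maps to a reduced path. Hence $f$ is an isomorphism.

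Surjectivity of $\psi$ follows next. $\psi(\pi_1(\mathcal{G}))\widetilde\Lambda$ together with the translates of the $\tilde e$ is $f(T_{\mathcal{G}})=T$. Given $g\in G$, we have $g\tilde v=\psi(x)\tilde w$ for some $x$ and $w$, which forces $w=v$ and $\psi(x)^{-1}g\in G_v$, so $g\in\operatorname{im}\psi$.

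Injectivity of $\psi$ follows similarly. If $\psi(x)=1$, then $f(xG_v)=\tilde v=f(G_v)$, so $xG_v=G_v$ by injectivity of $f$. Then $x\in G_v$, and $\psi$ restricted to $G_v$ is the inclusion, so $x=1$.
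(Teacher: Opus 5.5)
Your proposal is correct and is essentially Serre's proof of Theorem 13, which the paper cites rather than reproves. You lift a maximal tree of the quotient, define $\psi$ and the equivariant map on the Bass--Serre tree, check bijectivity on stars, and use that a locally injective morphism from a connected graph into a tree is injective.

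Two points of bookkeeping to tidy. First, take $\tilde{\bar e}:=g_e^{-1}\overline{\tilde e}$ and use a single edge group $G_e=G_{\bar e}=\mathrm{Stab}(\tilde e)$, embedded into the terminal vertex group by the twisted map $a\mapsto g_e^{-1}ag_e$; this is what the graph-of-groups axiom $G_e=G_{\bar e}$ requires. Second, Step 5 only needs $T_{\mathcal G}$ to be connected, not the full normal form theorem.
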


We refer the reader to \cite[Section 5]{Se03} for more on this topic, or in \cite[Theorem 4.3]{MR0564422} for a more topological viewpoint.

The last missing ingredients for the proofs in this section are understanding the behaviour of the elements of $G$.

Fixing a group $G$, a tree $T$ and an action (without inversions) of $G$ on $T$, we say that an element $g \in G$ is \emph{elliptic} if there exists a $v \in V$ such that $g \cdot v = v$. Equivalently, if $g$ is an element of a conjugate of $G_{v}$ for some $v \in V$. In this case, the set of fixed points of $g$ induces a subtree denoted by $T^{\langle g \rangle}$.

On the other hand, we say $g \in G$ is \emph{loxodromic} (also called sometimes hyperbolic) if there exists a unique geodesic line $\ell$ (the isomorphic image of a bi-infinite path embedded on $T$) called the \emph{axis of $g$}, that is invariant such that $g$ acts on $\ell$ by translations. Equivalently, if $g$ is not elliptic.

\begin{theorem}\label{graph:groups:condition:C}
Let $\mathcal{G} = (\Gamma, (G_{v})_{v \in V}, (G_{a})_{a \in A}, (\varphi_{a})_{a \in A})$ be a graph of groups with fundamental group $G$ and Bass–Serre tree $T$. Suppose that:
\begin{enumerate}[a)]
    \item for every $v \in V$, the vertex group $G_{v}$ is balanced; and
    \item For every elliptic element $h \in G$ of infinite order, the fixed-point set 
$T^{\langle h \rangle}$ does not contain the axis of any loxodromic element.
\end{enumerate}
Then $G$ is balanced.
\end{theorem}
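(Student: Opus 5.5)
Suppose $g h^k g^{-1} = h^\ell$ with $h$ of infinite order and $k,\ell \neq 0$. I would split according to whether $h$ is loxodromic or elliptic for the action of $G$ on $T$, and handle each case with translation lengths or fixed-point sets.

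\emph{Loxodromic case.} If $h$ is loxodromic with translation length $\tau(h)>0$, then $h^k$ and $h^\ell$ are loxodromic with $\tau(h^k)=|k|\tau(h)$ and $\tau(h^\ell)=|\ell|\tau(h)$. This uses the standard fact that powers of a loxodromic element share its axis and translate along it by the corresponding multiple. Translation length is a conjugacy invariant, since $\tau(ghg^{-1})=\tau(h)$ as $g$ carries the axis of $h$ to the axis of $ghg^{-1}$. Hence $|k|\tau(h)=|\ell|\tau(h)$, and so $|k|=|\ell|$. No hypothesis is needed here.

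\emph{Elliptic case.} Now let $h$ be elliptic. Then $h^k$ and $h^\ell$ are elliptic of infinite order, and $T^{\langle h\rangle}\subseteq T^{\langle h^k\rangle}\cap T^{\langle h^\ell\rangle}$. From the relation, $g\,T^{\langle h^k\rangle}=T^{\langle h^\ell\rangle}$.

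I would first show that $g$ fixes some vertex of $T^{\langle h^{k\ell}\rangle}$ or otherwise reduce to a vertex group. The cleanest route is to consider $S=T^{\langle h^{k\ell}\rangle}$, a nonempty subtree containing both $T^{\langle h^k\rangle}$ and $T^{\langle h^\ell\rangle}$. The relation gives $g h^{k\ell} g^{-1}=h^{\ell^2}$ and $g^{-1}h^{k\ell}g = h^{k^2}$. More usefully, consider $H=\langle h^{k\ell}\rangle$ and its commensurator-type behaviour: $g$ conjugates $\langle h^k\rangle$ onto $\langle h^\ell\rangle$, which is commensurable with $\langle h^k\rangle$. Hence $g$ maps the subtree $F=\bigcup_{m\ge1}T^{\langle h^{m}\rangle}$ (an increasing union of subtrees, itself a subtree) into itself. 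This holds because $g\,T^{\langle h^{km}\rangle}=T^{\langle h^{\ell m}\rangle}\subseteq F$, and every $T^{\langle h^{m}\rangle}\subseteq T^{\langle h^{km}\rangle}$. Likewise $g^{-1}F\subseteq F$, so $gF=F$.

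If $g$ were loxodromic, then $g$ preserves the subtree $F$, so its axis lies in $F$. I would then argue that the axis lies in some single $T^{\langle h^m\rangle}$, and this is the step I expect to be the main obstacle. Take a segment of the axis of length greater than $\tau(g)$. It lies in some $T^{\langle h^{m}\rangle}$ because the union is increasing and a finite segment lies in finitely many pieces. One then uses $g$-translates together with $g T^{\langle h^{km'}\rangle}= T^{\langle h^{\ell m'}\rangle}$ to cover the whole axis by fixed sets of the single element $h^{N}$ with $N = m\cdot k\ell$-type multiples. If that bookkeeping fails, I would instead apply hypothesis (b) to the elliptic element $h^{m}$ in a weaker form, noting that the argument only needs the axis inside the fixed set of some power of $h$. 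Either way, hypothesis (b) applied to an appropriate power $h^N$ (elliptic, infinite order) gives a contradiction. So $g$ is elliptic.

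An elliptic $g$ preserving the nonempty subtree $F$ fixes a point of $F$. Indeed, the projection of a fixed point of $g$ onto $F$ is fixed, and it is a vertex, since there are no inversions. So $g$ fixes a vertex $v\in T^{\langle h^m\rangle}$ for some $m$. Then $g$ and $h^m$ both lie in $G_v$, a conjugate of a vertex group, which is balanced by (a) and the invariance of balancedness under isomorphism. Raising the relation to the $m$-th power gives $g (h^m)^k g^{-1}=(h^m)^\ell$ in $G_v$, with $h^m$ of infinite order. Therefore $|k|=|\ell|$.
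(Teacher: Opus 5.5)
The step you flag as the main obstacle is a genuine gap, and it cannot be closed. Everything before it is fine: $F=\bigcup_{m\ge1}T^{\langle h^m\rangle}$ is a subtree (a directed union), $gF=F$, and a loxodromic $g$ would have its axis in $F$. The trouble is that hypothesis (b) constrains only a single fixed set $T^{\langle h^N\rangle}$. Your translates only give $g^{j}T^{\langle h^m\rangle}\subseteq T^{\langle h^{\ell^j m}\rangle}$ and $g^{-j}T^{\langle h^m\rangle}\subseteq T^{\langle h^{k^j m}\rangle}$, and these exponents are unbounded in general. The ``weaker form'' of (b) you fall back on is really a stronger hypothesis.

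In fact the statement as written is false. Let $G=BS(1,n)=\langle a,t\mid tat^{-1}=a^n\rangle$ with $n\ge2$, viewed as an HNN extension of $\langle a\rangle\cong\mathbb{Z}$, and let $v_0$ be the vertex fixed by $\langle a\rangle$.
\begin{itemize}
\item In the Bass--Serre tree each vertex has one neighbour one level up (for $v_0$ this is $t^{-1}v_0$) and $n$ neighbours one level down (the $a^itv_0$), and $G$ fixes the upward end.
\item For $j\ge0$, the vertices at height $-j$ are the $ut^jv_0$ with $u$ in the normal subgroup $\mathbb{Z}[\tfrac{1}{n}]$, and their stabilizer is $\langle a^{n^j}\rangle$. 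Vertices at positive height have stabilizers containing $a$.
\item Hence for $m\neq0$ the set $T^{\langle a^m\rangle}$ is the horoball $\{\text{height}\ge -e\}$, where $e$ is the largest integer with $n^e\mid m$. Every nontrivial elliptic element, being conjugate to some $a^m$, fixes exactly a horoball.
\item A bi-infinite geodesic can never step up after stepping down, since the only upward neighbour of the new vertex is the one just left. So its height is unbounded below, and no horoball contains an axis.
\end{itemize}
Thus (a) and (b) hold, yet $tat^{-1}=a^n$. Here $F=T$ contains the axis of $t$, although no single $T^{\langle a^N\rangle}$ does.

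The paper's proof has the same hole. It asserts that the relation makes $g$ preserve $T^{\langle h\rangle}$, whereas the relation only gives $g\,T^{\langle h^k\rangle}=T^{\langle h^\ell\rangle}$. In the example, $t\,T^{\langle a\rangle}=T^{\langle a^n\rangle}\supsetneq T^{\langle a\rangle}$.

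What your argument does prove, unchanged, is the theorem with (b) replaced by: for every elliptic $h$ of infinite order, $\bigcup_{m\ge1}T^{\langle h^m\rangle}$ contains no axis of a loxodromic element. In particular it gives a correct proof of \cref{cor:CinAcylTrees}. Under Sela-acylindricity each $T^{\langle h^m\rangle}$ has diameter at most some $K$, hence so does the directed union $F$. A $g$-invariant bounded subtree forces $g$ to be elliptic, and your projection argument plus balancedness of $G_v$ applied to $h^m$ finish the proof.

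Your loxodromic case via translation length is correct and needs no hypothesis. It is simpler than the paper's route through the set-stabilizer of the axis, the map to $D_\infty$, and \cref{propertyC:sec}, which uses (b) only to make the kernel torsion.
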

\begin{proof}
     Given $g,h \in G$ with $h$ of infinite order such that there exist $k,l \in \mathbb{Z}\setminus\{0\}$ satisfying that $g h^{k} g^{-1} = h^{l}$, we need to prove that $|k| = |l|$.
    
    Now we divide the proof into two cases.

\emph{Case 1. $h$ is an elliptic automorphism of $T$:} 
Since $g h^{k} g^{-1} = h^{l}$, it follows that $g$ preserves $T^{\langle h \rangle}$. 
Moreover, by assumption $T^{\langle h \rangle}$ does not contain the axis of any loxodromic element. Therefore $g$ is elliptic, and hence it fixes some vertex $v \in T^{\langle h \rangle}$.
Since, by assumption, $G_{v}$ is balanced and $g,h \in G_{v}$, it follows that $|k| = |l|$.

    \emph{Case 2. $h$ is a loxodromic automorphism of $T$:} 
In this case, there exists a unique bi-infinite geodesic $\ell \subset T$ invariant under $h$, 
on which $h$ acts by translations. 
It follows that $g h^{k} g^{-1}$ leaves $g \cdot \ell$ invariant, acting again by translations. 
Since $g h^{k} g^{-1} = h^{l}$ and $h^{l}$ acts on the unique invariant geodesic $\ell$, 
we deduce that $g \cdot \ell = \ell$. 
Thus $g,h \in \operatorname{Stab}_{set}(\ell)$.

    Since $\ell$ is a bi-infinite geodesic then $Aut(\ell) \cong D_{\infty}$, and then we obtain a representation $\rho\colon stab_{set}(\ell) \to D_{\infty}$, by assumption, the fixed-point set of every elliptic element $h \in G$ of infinite order does not contain the axis of any loxodromic element, it follows that the kernel is a torsion group. Since the property of being balanced is inherited by subgroups, and that torsion groups satisfy property $(MAX)$ vacuously, we have that the short exact sequence $$1 \to ker(\rho) \to stab_{set}(\ell) \to \rho(stab_{set}(\ell)) \to 1$$ satisfies the conditions of Corollary \ref{propertyC:sec}.
    Hence, $stab_{set}(\ell)$ is balanced and therefore $|k| = |l|$.
\end{proof}


\begin{remark}\label{remark:BSTconditions}
    While it seems that item $(b)$ is fairly restrictive, note that there are many cases which satisfy it:
\begin{enumerate}
    \item If all edge groups are finite.
    \item If the action of $G$ on $T$ is WPD, then this is satisfied (see \cite{MR1914565} for the definition of WPD).
    \item In particular, if the action is Sela-acylindrical (i.e. there exists a $k \geq 0$ such that for every elliptic element $h \in G$ we have that $T^{\langle h\rangle}$ has diameter at most $k$).
\end{enumerate}
\end{remark}

An immediate consequence of Remark \ref{remark:BSTconditions} (3), we have the following result.

\begin{corollary}\label{cor:CinAcylTrees}
Let $\mathcal{G} = (\Gamma, (G_{v})_{v \in V}, (G_{a})_{a \in A}, (\varphi_{a})_{a \in A})$ be a graph of groups with fundamental group $G$ and Bass–Serre tree $T$. Suppose that:
\begin{enumerate}[a)]
    \item for every $v \in V$, the vertex group $G_{v}$ is balanced; and
    \item the action of $G$ in $T$ is Sela-acylindrical. 
    \end{enumerate}
Then $G$ is balanced.
\end{corollary}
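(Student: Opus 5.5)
The plan is to deduce Corollary~\ref{cor:CinAcylTrees} directly from Theorem~\ref{graph:groups:condition:C}. Hypothesis (a) is identical in both statements, so the only work is to check that Sela-acylindricity implies hypothesis (b) of Theorem~\ref{graph:groups:condition:C}. Concretely, I must show that for every elliptic $h\in G$ of infinite order, $T^{\langle h\rangle}$ contains no axis of a loxodromic element.

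I will argue by contradiction. Suppose $h$ is elliptic of infinite order and $T^{\langle h\rangle}$ contains the axis $\ell$ of some loxodromic element $g$. Since $\ell$ is a bi-infinite geodesic, it contains vertices at arbitrarily large distance from one another. Hence $T^{\langle h\rangle}$ has infinite diameter.

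Sela-acylindricity gives a uniform constant $k$ such that $T^{\langle h\rangle}$ has diameter at most $k$ for every elliptic element $h$. This contradicts the infinite diameter just obtained, so hypothesis (b) holds. Theorem~\ref{graph:groups:condition:C} then yields that $G$ is balanced.

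The only point needing care is the convention in the definition of Sela-acylindricity. If the bound is stated only for nontrivial elliptic elements, the argument is unaffected, since $h$ has infinite order and is therefore nontrivial. If the definition is phrased via stabilizers of segments of length greater than $k$ being trivial (or finite), then $h$ fixing $\ell$ would fix such segments. That would force $h$ to be trivial or of finite order, again contradicting $|h|=\infty$. I do not expect any genuine obstacle; this step is exactly Remark~\ref{remark:BSTconditions}(3).
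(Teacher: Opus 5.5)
\textbf{Verdict: genuine gap.} Your reduction is exactly the paper's: there the corollary is stated as an immediate consequence of Remark~\ref{remark:BSTconditions}(3) and Theorem~\ref{graph:groups:condition:C}. Your check that acylindricity keeps axes out of a single fixed set $T^{\langle h\rangle}$ is fine. The trouble is that Theorem~\ref{graph:groups:condition:C} is false as stated, so citing it does not prove the corollary.

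\emph{Counterexample.} Take $BS(1,n)=\langle a,t\mid tat^{-1}=a^n\rangle=\Z[1/n]\rtimes\langle t\rangle$ with $n\ge 2$, viewed as an HNN extension of $\langle a\rangle\cong\Z$; hypothesis (a) holds. Identify $a$ with $1\in\Z[1/n]$. The vertex $gv_0$ of the Bass--Serre tree has a well-defined height, namely the $t$-exponent sum of $g$. The stabilizer of a vertex of height $j$ is $n^{j}\Z$, and each vertex has one neighbour one level down and $n$ neighbours one level up. So the elliptic elements of infinite order are the nonzero $u\in\Z[1/n]$, and each $T^{\langle u\rangle}$ is a sublevel set $\{\text{height}\le c_u\}$. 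The lower neighbour of an upper neighbour of $x$ is $x$ itself, so a geodesic never goes up and then down. Hence every bi-infinite geodesic has height unbounded above, and none lies in any $T^{\langle u\rangle}$. Thus hypothesis (b) holds, yet $tat^{-1}=a^n$.

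\emph{The failing step.} In the elliptic case, the proof of that theorem asserts that $g$ preserves $T^{\langle h\rangle}$. In fact $gh^kg^{-1}=h^l$ only gives $g\,T^{\langle h^k\rangle}=T^{\langle h^l\rangle}$. Here $t\,T^{\langle a\rangle}=T^{\langle a^n\rangle}\supsetneq T^{\langle a\rangle}$.

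\textbf{Repair.} The corollary itself is true, but you must apply acylindricity to all powers of $h$ at once. Let $K$ be the acylindricity constant, and suppose $gh^kg^{-1}=h^l$ with $h$ elliptic of infinite order.
\begin{enumerate}
\item Pick $v_0\in T^{\langle h\rangle}$ and set $F=\bigcup_{m\neq 0}T^{\langle h^m\rangle}$. Each $T^{\langle h^m\rangle}$ is a subtree containing $v_0$ of diameter at most $K$, so $F$ is a subtree inside the ball of radius $K$ about $v_0$.
\item For $x\in T^{\langle h^m\rangle}\subseteq T^{\langle h^{km}\rangle}$ we get $gx\in T^{\langle h^{lm}\rangle}\subseteq F$. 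The same argument with $g^{-1}$ gives $g^{-1}F\subseteq F$, so $gF=F$.
\item Hence $g$ has a bounded orbit and is elliptic. Nearest-point projection onto $F$ commutes with $g$, so it sends a vertex fixed by $g$ to a vertex $w\in F$ fixed by $g$.
\item Then $w\in T^{\langle h^m\rangle}$ for some $m\neq 0$, and $g(h^m)^kg^{-1}=(h^m)^l$ holds in the stabilizer of $w$. That stabilizer is a conjugate of a vertex group, hence balanced, so $|k|=|l|$.
\end{enumerate}
If $h$ is loxodromic, no hypothesis is needed. Translation length is a conjugacy invariant, $\tau(h^j)=|j|\,\tau(h)$, and $\tau(h)>0$. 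Therefore $|k|\,\tau(h)=|l|\,\tau(h)$ gives $|k|=|l|$.
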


The above corollary provides a mechanism for constructing a wide class of explicit examples of balanced groups. In particular, it applies to groups arising from acylindrical actions on trees. As an application, in the following section we show that the fundamental groups of both irreducible extended graph manifolds and 3-manifolds are balanced.

Also, using Remark \ref{remark:BSTconditions} (1), we have the following consequence.
    
\begin{corollary}
Let \( G \) be a virtually free group. Then 
\begin{enumerate}[a)]
    \item $G$ is balanced.
    \item the
commensurators of virtually cyclic subgroups of \( G \) are realized as normalizers.
\end{enumerate}
\end{corollary}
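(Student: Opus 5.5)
For part (a), I would realize $G$ as the fundamental group of a graph of groups with finite vertex groups and apply \cref{graph:groups:condition:C} through \cref{remark:BSTconditions}(1). First, I would reduce to finitely generated groups. Every group is the direct limit of its finitely generated subgroups, and subgroups of virtually free groups are virtually free. So, by \cref{balanced:direc:limits}, it suffices to treat a finitely generated virtually free group $G$. Alternatively, and more directly, a virtually free group has a free subgroup $F$ of finite index. Free groups are balanced, since every finitely generated free group is hyperbolic by \cref{example:condition C} and \cref{balanced:direc:limits} handles the general case. Then \cref{conditionC:finite:extension} gives that $G$ is balanced.

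Since the corollary is stated as a consequence of \cref{remark:BSTconditions}(1), I would also give that route for finitely generated $G$. By the Karrass--Pietrowski--Solitar theorem, $G$ is the fundamental group of a finite graph of groups with finite vertex groups. All edge groups are then finite, and finite vertex groups are vacuously balanced. Hypothesis (b) of \cref{graph:groups:condition:C} is vacuous because every elliptic element lies in a conjugate of a finite vertex group, so there is no elliptic element of infinite order. Hence $G$ is balanced. The only external input here is the structure theorem for virtually free groups, and I regard citing it as the main (though standard) step.

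For part (b), I would apply \cref{conditionC:and:roots}. It requires $G$ to be balanced, which is part (a), and to have a finite-index normal subgroup with uniqueness of roots. Take the free subgroup $F$ of finite index and replace it by its normal core $\bigcap_{g\in G} gFg^{-1}$. This core is a finite-index normal subgroup of $G$, and it is free as a subgroup of a free group (Nielsen--Schreier). It remains to check that free groups have uniqueness of roots. Suppose $f^n=g^n$ with $n\neq 0$ in a free group. If $f=1$ then $g$ has finite order, so $g=1$. Otherwise $f^n$ lies in a unique maximal cyclic subgroup $\langle c\rangle$; this is Condition MAX for free groups from \cref{remark:examples MAX}, where the maximal virtually cyclic subgroups are cyclic. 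Both $f$ and $g$ commute with $f^n$, so they lie in its centralizer $\langle c\rangle$. Writing $f=c^a$ and $g=c^b$, the equation gives $na=nb$, hence $a=b$ and $f=g$. Then \cref{conditionC:and:roots} yields that commensurators of virtually cyclic subgroups of $G$ are normalizers.

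I expect no serious obstacle. The main points needing care are the passage from the free subgroup to a normal one via the core, and justifying that centralizers in free groups are cyclic.
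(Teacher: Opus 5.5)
Your proof is correct. Part (b) follows the paper's argument and is more careful than it. Part (a) takes a genuinely different primary route, while your secondary route is the paper's.

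\textbf{Part (b).} The paper also uses a finite-index free subgroup, uniqueness of roots in free groups, and \cref{conditionC:and:roots}. That theorem requires a finite-index \emph{normal} subgroup with uniqueness of roots. The paper simply takes ``a free subgroup of finite index''; you pass to its normal core, which is the step actually needed. You also verify uniqueness of roots, which the paper only asserts. The identification of the centralizer of $f^n$ with $\langle c\rangle$ is cleanest this way: that centralizer is a free group with nontrivial center, hence infinite cyclic, hence equal to $\langle c\rangle$.

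\textbf{Part (a), paper's route.} Your second argument is exactly the paper's. It realizes $G$ as the fundamental group of a graph of finite groups and applies \cref{graph:groups:condition:C}, whose hypothesis (b) is vacuous there.

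\textbf{Part (a), your primary route.} This is genuinely different and more elementary. Free groups are balanced: hyperbolic in the finitely generated case, and by \cref{balanced:direc:limits} in general. Then \cref{conditionC:finite:extension} lifts this to $G$. This bypasses both the structure theorem for virtually free groups and the Bass--Serre criterion. It also covers non-finitely generated virtually free groups at no extra cost.

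\textbf{Comparison.} The structure theorem the paper invokes is usually stated for finitely generated groups. So for arbitrary $G$, the paper's route implicitly needs a reduction like the direct-limit argument you supply. What the paper's route buys is illustrative value: it presents the corollary as an instance of \cref{remark:BSTconditions}(1), which is why it sits where it does.
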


\begin{proof}
By \cite{MR0564422}, the group $G$ can be realized as the fundamental group of a graph of groups whose vertex and edge groups are finite. It then follows from \cref{graph:groups:condition:C} that $G$ is balanced.

On the other hand, \( G \) contains a free subgroup of finite index, and since every free group has uniqueness of roots, the result follows from \cref{conditionC:and:roots}.
\end{proof}

\section{Some applications of the previous results}\label{applications:previus}

In this section we present several applications of the preceding results. In particular, we prove that the fundamental groups of $3$-manifolds and graph manifolds are balanced.

\subsection{Riemannian manifolds and graph manifolds}

As was mentioned in Example \ref{example:condition C}, hyperbolic groups are known to be balanced, and under the right conditions relatively hyperbolic groups are also balanced.

\begin{proposition}[Balanced property in relatively hyperbolic groups]\label{prop:Cinrelhyper}
Let $G$ be a group that is hyperbolic relative to a finite collection of subgroups
$\mathcal H=\{H_1,\dots,H_n\}$.
Assume that each peripheral subgroup $H_i$ is balanced.
Then $G$ is balanced.
\end{proposition}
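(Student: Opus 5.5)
We argue by splitting according to the geometry of an infinite order element $h$ with $gh^kg^{-1}=h^l$, where $k,l\neq 0$. By the classification of elements in a relatively hyperbolic group (Osin), an infinite order element of $G$ is either \emph{parabolic}, meaning it is conjugate into some $H_i$, or \emph{loxodromic}. Conjugating the whole relation, we may assume in the parabolic case that $h\in H_i$.

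\emph{Parabolic case.} Suppose $h\in H_i$ has infinite order. Then $h^l=gh^kg^{-1}$ lies in $H_i\cap gH_ig^{-1}$, and this intersection contains the infinite subgroup $\langle h^l\rangle$. Peripheral subgroups of a relatively hyperbolic group are almost malnormal: $H_i\cap gH_jg^{-1}$ is finite unless $i=j$ and $g\in H_i$ (Osin, Theorem 1.4). It follows that $g\in H_i$, so the relation $gh^kg^{-1}=h^l$ holds inside $H_i$. Since $H_i$ is balanced, $|k|=|l|$.

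\emph{Loxodromic case.} Here we use Osin's result that every loxodromic element $h$ lies in a unique maximal elementary subgroup
\[
E(h)=\{x\in G : xh^mx^{-1}=h^{\pm m}\ \text{for some } m\neq 0\},
\]
which is virtually cyclic. An alternative route is to note that, on the cusped space (Groves--Manning) on which $G$ acts, $h$ acts loxodromically with an undistorted quasi-axis, so $\tau(h^l)=|l|\tau(h)>0$. Here $\tau$ is the stable translation length, which is conjugation-invariant, and since $\tau(gh^kg^{-1})=\tau(h^k)=|k|\tau(h)$ we obtain $|k|=|l|$ directly. I would present this translation-length argument as the main one, since it needs only that $\tau(h)>0$ for loxodromics and homogeneity $\tau(h^n)=|n|\tau(h)$. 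As an alternative to that, $g$ maps the quasi-axis of $h$ to that of $h^l$ and fixes the pair $\{h^{\pm\infty}\}$, so $g\in E(h)$, which is virtually cyclic and hence balanced by \cref{virtually:Z:conditionA} (Condition~A applied to inner automorphisms).

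\emph{Main obstacle.} The delicate point is justifying the dichotomy together with $\tau(h)>0$ for non-parabolic infinite order elements. This requires that the action on the cusped hyperbolic space has no elliptic infinite order elements outside conjugates of the $H_i$, and that parabolic isometries of this space are exactly the conjugates of peripheral elements. I would cite Osin's classification, which gives that every infinite order element is either conjugate into some $H_i$ or hyperbolic with $E(h)$ virtually cyclic, and rely on the almost malnormality statement from the same source.
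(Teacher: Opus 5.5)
Your proposal is correct and is essentially the paper's proof. In the parabolic case you conjugate $h$ into $H_i$ and use almost malnormality to force $g\in H_i$; in the non-parabolic case you use a conjugation-invariant, homogeneous translation length that is positive on infinite-order non-parabolic elements, which is exactly what the paper does with Osin's relative translation length $\tau^{rel}$ (Theorem~4.23 and Corollary~4.26 of Osin's memoir).

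The only caveat is that the Groves--Manning cusped space is built for finitely generated $G$. For the statement in full generality you should therefore measure translation length in Osin's relative metric $d_{X\cup\mathcal H}$, as the paper does, or use your $E(h)$ alternative, which also works in that setting.
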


\begin{proof}
Let $h\in G$ be an element of infinite order and let $g\in G$ such that
\[
g h^m g^{-1}=h^n
\]
for some $m,n\in\mathbb Z\setminus\{0\}$.
We must prove that $|m|=|n|$. 

First suppose that there exists $x\in G$ and $i\in\{1,\dots,n\}$ such that
\[
h\in x H_i x^{-1}.
\]
The equality $g h^m g^{-1}=h^n$ implies that
\[
\langle h^n\rangle \subseteq x H_i x^{-1}\cap g x H_i x^{-1} g^{-1}.
\]
Since $h$ has infinite order, this intersection is infinite. Moreover,
\[
x^{-1}( x H_i x^{-1}\cap g x H_i x^{-1} g^{-1})x
= H_i\cap x^{-1}g x H_i x^{-1} g^{-1}x,
\]

which implies that $H_i\cap x^{-1}g x H_i x^{-1} g^{-1}x$ is also infinite.

Because peripheral subgroups are almost malnormal in relatively hyperbolic groups,
it follows that $x^{-1}gx \in H_i$. Hence,
\[
g\in x H_i x^{-1}.
\]
Thus $g,h\in x H_i x^{-1}$ and the equality $g h^m g^{-1}=h^n$ holds inside the subgroup
$x H_i x^{-1}$. Since being balanced is invariant under conjugation and $H_i$ is balanced,
we conclude that $|m|=|n|$.

Finally, in the case where $h$ does not belong to any conjugate of a peripheral subgroup, the statement was proved in \cite[Corollary 4.26]{MR2182268} (recall that in \cite{MR2182268} a hyperbolic element is an element that cannot be conjugated into some $H_i$). For the sake of completeness, we give a quick summary of the proof is that in \cite{MR2182268}: A \emph{relative translation length} ($\tau^{rel}$) is introduced, which satisfies the same properties as the algebraic translation length in \cite{BH99} (it is invariant under conjugation and transforms powers into the absolute value of the sums). Moreover, in \cite{MR2182268} they prove that the relative translation length of any element that cannot be conjugated into some $H_i$ is positive (Theorem 4.23). Thus we have
$$\begin{array}{rcl}
  |n|\tau^{rel}(h)   & = &  \tau^{rel}(h^{n}) \\
     & = &  \tau^{rel}(gh^{m}g {-1}) \\
     & = &  \tau^{rel}(h^m) \\
     & = & |m|\tau^{rel}(h)
\end{array}$$
Implying that $|m| = |n|$ as desired.

\end{proof}

With this result, we have that the fundamental group of any complete finite-volume Riemannian manifold with pinched negative curvature, is balanced.

\begin{corollary}\label{cor:CinNonPosRiemannian}
    Let $M$ be a complete finite-volume Riemmanian manifold with pinched negative curvature. Then, $\pi_{1}(M)$ is balanced.
\end{corollary}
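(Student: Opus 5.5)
The plan is to deduce \cref{cor:CinNonPosRiemannian} from \cref{prop:Cinrelhyper}, by splitting into two cases according to whether $M$ is compact.

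\emph{Case 1: $M$ compact.} Then $\pi_1(M)$ acts properly and cocompactly by deck transformations on the universal cover $\widetilde M$. Since the sectional curvature is bounded above by a negative constant, $\widetilde M$ is a $\mathrm{CAT}(-\kappa)$ space. By the Švarc--Milnor lemma, $\pi_1(M)$ is quasi-isometric to $\widetilde M$ and is therefore hyperbolic. It is then balanced by \cref{example:condition C}(1). (Alternatively, $\pi_1(M)$ is a $\mathrm{CAT}(0)$ group, so \cref{example:condition C}(2) applies.)

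\emph{Case 2: $M$ non-compact of finite volume.} Here the curvature is pinched, $-b^2\le K\le -a^2<0$, which is the setting of the classical results of Farb and Bowditch. These give that $\pi_1(M)$ is hyperbolic relative to the finite collection of cusp subgroups $\{H_1,\dots,H_n\}$. The facts needed are:
\begin{itemize}
\item $M$ is the union of a compact core and finitely many cusp ends;
\item the cusp subgroups are the stabilizers of the parabolic fixed points in $\partial\widetilde M$, and they act cocompactly on horospheres.
\end{itemize}

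The next step is to show that each $H_i$ is balanced. By the Margulis lemma, or by Eberlein's and Heintze's description of cusps in pinched negative curvature, each $H_i$ is finitely generated and virtually nilpotent: it acts properly and cocompactly on a horosphere, which is quasi-isometric to a nilpotent Lie group. \cref{prop:nilpotent is balanced} then shows that each $H_i$ is balanced, and \cref{prop:Cinrelhyper} gives that $\pi_1(M)$ is balanced.

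The main obstacle is not in the group theory. It is in correctly citing the geometric input:
\begin{itemize}
\item finiteness of the set of cusps, which follows from finite volume and pinching;
\item relative hyperbolicity, for which I would cite Farb's thesis or Bowditch;
\item virtual nilpotency of the parabolic subgroups, which follows from the Margulis lemma in pinched negative curvature. The lower curvature bound is used essentially here.
\end{itemize}

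If one prefers to avoid the nilpotency statement, an alternative is available. A parabolic subgroup fixing a point $\xi\in\partial\widetilde M$ acts by isometries preserving Busemann functions, so one can argue balance directly. However, the virtually nilpotent route through \cref{prop:nilpotent is balanced} is cleaner, and it is the one I would follow.
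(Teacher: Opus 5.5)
Your proposal is correct and is essentially the paper's proof. Both invoke Farb's theorem that $\pi_1(M)$ is hyperbolic relative to the finitely many cusp subgroups, use that these subgroups are finitely generated and virtually nilpotent, and then combine Proposition~\ref{prop:nilpotent is balanced} with Proposition~\ref{prop:Cinrelhyper}. Your separate compact case is harmless but not needed, since then there are no cusps and the same argument reduces to the hyperbolic case.
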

\begin{proof}
    Due to Theorem 4.11 in \cite{MR1650094}, we know that $\pi_{1}(M)$ is hyperbolic relative to their cusp subgroups. Moreover, we know that these cusp subgroups are finitely generated virtually nilpotent groups. Then, by Propositions \ref{prop:nilpotent is balanced} and \ref{prop:Cinrelhyper}, we have the result.
\end{proof}

In a more general setting, we can use the previous results to see that irreducible (extended) graph manifolds (see \cite{MR3444648}) have also a balanced fundamental group. For the sake of completeness, we include the relevant definitions.

\begin{definition}[cf. \cite{MR3444648}]
    Let $M$ be a compact smooth $n$-manifold wth $n \geq 3$.
    \begin{enumerate}
        \item We say $M$ is a \emph{graph manifold} if it can be constructed in the following way.
        \begin{enumerate}
            \item For every $i = 1, \ldots, r$, let $N_{i}$ be complete finite-volume non-compact hyperbolic $n_{i}$-manifold with $3 \leq n_{i} \leq n$ and toric cusps, and denote by $\overline{N_{i}}$ the resulting $n_{i}$-manifold from obtained by truncating its cusps
            \item For each $i = 1, \ldots, r$, define $V_{i} := \overline{N_{i}} \times T^{n-n_{i}}$, where $T^k$ is the $k$-dimensional torus.
            \item Fix a pairing of some boundary components of the $V_i$'s and glue the paired boundary components using affine diffeomorphisms of the boundary tori, so as to obtain a connected manifold of dimension $n$.
        \end{enumerate}
        Note that $\partial M$ is either empty or consists of tori. The submanifolds $V_i$ are called \emph{pieces}, and given the obvious trivial bundle structure, the $\overline{N_{i}}$ are called the \emph{base} of $V_i$, while sets of the form $\{\ast\} \times T^{n-n_i}$ are called \emph{fibers}. Also, the boundary tori of the $V_i$'s that are glued together are called \emph{internal walls} of $M$. If $V^+$ and $V^-$ are two pieces glued by an internal wall, we say they are \emph{adjacent}.
        \item An \emph{extended graph manifold} is an $n$-manifold constructed as above where it is permitted to have pieces with base a complete non-compact hyperbolic surface (thus, the corresponding fibers are $(n-2)$-dimensional) called \emph{surface pieces}, such that there are no adjacent surface pieces.
        \item Let $V^+$ and $V^-$ be adjacent pieces with corresponding paired boundary tori $T^+$ and $T^-$, and $T$ be the corresponding internal wall. Let also $\psi: T^+ \to T^-$ be the gluing diffeomorphism, $\psi_*$ be the corresponding $\pi_1$-automorphism, and $G_+$ and $G_-$ be the respective subgroups of $\pi_{1}(T^+)$ and $\pi_{1}(T^-)$ corresponding to the fiber subgroups. We say that the two pieces have \emph{transverse fibers} if $\psi_*(G_+) \cap G_-$ is trivial.
        \item An extended graph manifold is \emph{irreducible} if every pair of adjacent pieces has transverse fibers along every interior wall.
    \end{enumerate}
\end{definition}

Note that all extended graph manifolds have a natural decomposition as graph of spaces, and as such their fundamental group is the fundamental group of the induced graph of groups. One of the results from \cite{MR3444648} that is most relevant for this work is the following.

\begin{proposition}[6.4 in \cite{MR3444648}]\label{proposition:irred if and only if acyl}
    Let $M$ be an extended graph manifold with at least one interior wall. Then, $M$ is irreducible if and only if the action of $\pi_{1}(M)$ on its induced Bass-Serre tree is Sela-acylindrical.
\end{proposition}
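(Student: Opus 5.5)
The plan is to compute the fixed-point sets $T^{\langle h\rangle}$ directly from the algebraic structure of the vertex and edge groups of the graph-of-groups decomposition of $\pi_1(M)$. Vertex groups are the fundamental groups of the pieces, $\pi_1(V_i)\cong \pi_1(\overline{N_i})\times \mathbb{Z}^{n-n_i}$, where the second factor is the (central) fiber subgroup $F_i$. Edge groups are the wall groups $\pi_1(T)\cong\mathbb{Z}^{n-1}$. Inside $\pi_1(V_i)$, the edge groups at $v_i$ are the conjugates of $P\times F_i$, where $P$ runs over the cusp subgroups of $\pi_1(\overline{N_i})$. The first key input is that $\pi_1(N_i)$ is hyperbolic relative to its cusp subgroups; for a surface piece, $\pi_1$ of the base is free and its boundary cyclic subgroups are malnormal. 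In either case the peripheral subgroups of the base are (almost) malnormal and torsion-free, so they are malnormal. Consequently, two distinct edge groups adjacent to the same vertex $v$ intersect exactly in the fiber $F_v$.

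For the direction ``irreducible $\Rightarrow$ Sela-acylindrical'', I take a nontrivial $h$ fixing a geodesic segment of length $3$, with vertices $u,v,w,x$. Since $h$ fixes the two distinct edges at $v$, the malnormality above gives $h\in F_v$; similarly $h\in F_w$. The edge $[v,w]$ projects to an interior wall $T$, and inside $\pi_1(T)$ the groups $F_v$ and $F_w$ correspond, after the gluing isomorphism $\psi_*$, to $G_+$ and $G_-$. Transversality gives $\psi_*(G_+)\cap G_-=1$, so $h=1$, a contradiction. Hence every nontrivial elliptic element has fixed set of diameter at most $2$, and the action is Sela-acylindrical with constant $k=2$. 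The condition forbidding adjacent surface pieces is not really needed here, except to make sure that the fibers are defined as above.

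For the converse, suppose some interior wall $T$ between $V^+$ and $V^-$ is not transverse. Choose a lift $e=[v^+,v^-]$ and a nontrivial $h\in\psi_*(G_+)\cap G_-$, so that $h$ lies in the central subgroups $F_{v^+}$ and $F_{v^-}$ of $\mathrm{Stab}(v^\pm)$. Pick $g\in \mathrm{Stab}(v^+)\setminus \mathrm{Stab}(e)$ and $g'\in\mathrm{Stab}(v^-)\setminus\mathrm{Stab}(e)$. Centrality gives $ghg^{-1}=h$, so $h$ fixes $ge$ and $h\in F_{gv^-}=gF_{v^-}g^{-1}$; likewise for $g'$. Iterating alternately with conjugates of $g$ and $g'$ produces a path
\[
v^-,\ v^+,\ gv^-,\ gg'v^+,\ gg'gv^-,\ \dots
\]
without backtracking, since at each step the new edge differs from the previous one. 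Every vertex on this path has $h$ in its central fiber, so $h$ fixes an infinite ray. Therefore the fixed sets of nontrivial elements have unbounded diameter, and the action is not Sela-acylindrical. The assumption that $M$ has at least one interior wall guarantees that $T$ is not a point and that such an edge $e$ exists.

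The main obstacle I expect is the malnormality step: showing that two distinct edge groups at a vertex meet exactly in the fiber. This requires knowing that distinct cusp subgroups of $\pi_1(N_i)$, and distinct conjugates of a single cusp subgroup, intersect trivially. It also requires handling the identification of which conjugates correspond to which edges of $T$. I would deduce it from the almost malnormality of peripheral subgroups used in Proposition~\ref{prop:Cinrelhyper}, together with torsion-freeness of $\pi_1(N_i)$, projecting to the first factor of $\pi_1(\overline{N_i})\times F_i$.
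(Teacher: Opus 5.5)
Your proof is correct. There is no in-paper argument to compare it with: the paper does not prove this statement, but quotes it from Frigerio--Lafont--Sisto and uses it as a black box when it applies Corollary~\ref{cor:CinAcylTrees} to irreducible extended graph manifolds.

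What you give is a self-contained Bass--Serre argument, and both directions go through.

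\emph{Irreducible implies acylindrical.} Almost malnormality plus torsion-freeness makes each cusp subgroup $P$ malnormal in $\pi_1(N_i)$. In particular $N(P)=P$, so distinct cosets of $P\times F_v$ really do give distinct edge stabilizers at $v$, and any two of these meet exactly in $F_v$. Hence an element fixing two edges at a vertex lies in the central fiber. Transversality then kills the stabilizer of every segment of length $3$, which gives the explicit constant $k=2$.

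\emph{Non-transverse implies not acylindrical.} Your iteration is cleanest phrased as follows. A nontrivial $h\in\psi_*(G_+)\cap G_-$ is central in both $\mathrm{Stab}(v^+)$ and $\mathrm{Stab}(v^-)$, so it commutes with $g$ and with $g'$. Hence $h$ commutes with every word $w$ in $g,g'$, and
\[
h=whw^{-1}\in F_{wv^+}\cap F_{wv^-}.
\]
So $h$ fixes every edge $e,\ ge,\ gg'e,\ gg'ge,\dots$ of your path. Consecutive edges differ because $g,g'\notin\mathrm{Stab}(e)$, so the path has no backtracking and $h$ fixes a geodesic ray.

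Two minor remarks, neither affecting correctness. First, only boundary components that are actually glued give edges at $v$, not all cusp subgroups. Second, the hypothesis of at least one interior wall is not really used by your argument: in the converse direction the non-transverse wall already supplies the edge $e$.

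Compared with the citation, your route gives an explicit acylindricity constant and uses only malnormality of cusp subgroups and centrality of fibers. That is all the paper needs for its application.
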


A consequence of this result and Corollaries \ref{propertyC:sec}, \ref{cor:CinAcylTrees} and \ref{cor:CinNonPosRiemannian} is the following.

\begin{theorem}
    Let $M$ be an irreducible extended graph manifold. Then $\pi_{1}(M)$ is balanced.
\end{theorem}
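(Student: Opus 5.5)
We argue by cases, according to whether $M$ has an interior wall.

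\emph{No interior walls.} Then $M$ consists of a single piece $V=\overline{N}\times T^{k}$, so $\pi_1(M)\cong \pi_1(\overline{N})\times \mathbb{Z}^{k}$. Here $N$ is either a complete finite-volume hyperbolic manifold of dimension at least $3$ with toric cusps, or a complete non-compact hyperbolic surface. We handle this product through the short exact sequence
\[
1\to \mathbb{Z}^k\to \pi_1(M)\to \pi_1(\overline{N})\to 1 .
\]
\begin{itemize}
\item The kernel $\mathbb{Z}^k$ has property (MAX) by \cref{remark:examples MAX}.
\item The quotient $\pi_1(\overline N)=\pi_1(N)$ is balanced. If $\dim N\ge 3$, this is \cref{cor:CinNonPosRiemannian}. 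If $N$ is a surface, $\pi_1(N)$ is free, hence hyperbolic (or locally free if it is not finitely generated), and so balanced by \cref{example:condition C} and \cref{balanced:direc:limits}.
\end{itemize}
\cref{propertyC:sec} then shows that $\pi_1(M)$ is balanced.

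\emph{At least one interior wall.} By \cref{proposition:irred if and only if acyl}, irreducibility makes the action of $\pi_1(M)$ on the Bass--Serre tree of the graph-of-spaces decomposition Sela-acylindrical. The vertex groups are the fundamental groups of the pieces $V_i$. Each of these is $\pi_1(\overline{N_i})\times\mathbb{Z}^{n-n_i}$, where $\pi_1$-injectivity of the pieces is part of the standard graph manifold setup of \cite{MR3444648}. By the first case, every such vertex group is balanced. \cref{cor:CinAcylTrees} now gives that $\pi_1(M)$ is balanced.

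The main delicate point is the surface pieces. We must make sure the base surface group is balanced even when the surface is of finite type with cusps. This is fine because its fundamental group is free of finite rank. We must also check that the vertex group is indeed the product above, so that \cref{propertyC:sec} applies with the fiber $\mathbb{Z}^{n-2}$ as kernel. Everything else is a direct assembly of the cited results.
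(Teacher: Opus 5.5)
Your proof is correct and follows the paper's argument: Sela-acylindricity from Proposition~\ref{proposition:irred if and only if acyl} together with Corollary~\ref{cor:CinAcylTrees} reduces everything to the vertex groups $\pi_1(\overline{N_i})\times\mathbb{Z}^{n-n_i}$. These are balanced by Corollary~\ref{propertyC:sec}, with the (MAX) kernel $\mathbb{Z}^{n-n_i}$ and base group handled by Corollary~\ref{cor:CinNonPosRiemannian}; your separate treatment of the case without interior walls (where Proposition~\ref{proposition:irred if and only if acyl} does not literally apply) and of surface pieces via freeness of $\pi_1$ makes explicit two points the paper's proof passes over.
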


\begin{proof}
    Using Proposition \ref{proposition:irred if and only if acyl} and Corollary \ref{cor:CinAcylTrees}, we need only prove that $\pi_{1}(V_{i})$ are balanced for all $1 \leq i \leq r$.

    For each $i$, we have that $\overline{N_{i}}$ is a complete compact hyperbolic manifold. Hence, by Corollary \ref{cor:CinNonPosRiemannian}, $\pi_{1}(\overline{N_{i}})$ s balanced. Then, since $\pi_{1}(T^{n-n_{i}}) \cong \Z^{n-n_{i}}$ we have that $\pi_{1}(T^{n-n_{i}})$ has property (MAX).

    Therefore, by Corollary \ref{propertyC:sec}, we have that $\pi_{1}(V_{i}) \cong \pi_{1}(\overline{N_{i}}) \oplus \pi_{1}(T^{n-n_{i}})$ is balanced, which finishes the proof.
\end{proof}

\subsection{3-manifolds and balanced groups}
In this subsection we prove \cref{balanced:3-manifolds}.
We begin by recalling several fundamental results about $3$-manifolds that we use in this section. A $3$-manifold $M$ is said to be \emph{prime} if whenever it admits a connected sum decomposition $M = M_{1} \# M_{2}$, then one of the factors is the $3$--sphere $S^{3}$. 

One of the cornerstones of $3$-manifold topology is the fact that every closed, orientable $3$-manifold decomposes as a connected sum of prime $3$-manifolds. The existence of this decomposition is due to Kneser~\cite{Kneser1929}, while uniqueness was established by Milnor~\cite{Milnor:prime:decomposition}.

\begin{theorem}[Prime Decomposition Theorem]\label{split:prime}
Let $M$ be a closed, connected, orientable $3$--manifold. Then $M$ admits a decomposition
\[
M \;\cong\; P_{1} \# \cdots \# P_{n},
\]
where each $P_{i}$ is prime. Moreover, this decomposition is unique up to homeomorphism and permutation of the factors.
\end{theorem}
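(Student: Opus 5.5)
The plan is to prove existence and uniqueness separately, following Kneser and Milnor. In both parts I will use two standard facts. The first is Alexander's theorem: every smoothly (or PL) embedded $2$-sphere in $S^3$ bounds a $3$-ball on each side. The second is the elementary observation that a closed orientable prime $3$-manifold is either irreducible or homeomorphic to $S^2\times S^1$. I will work in the PL category and fix a triangulation $\mathcal T$ of $M$ with $t$ tetrahedra.

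\emph{Existence.} If $M$ is not prime, it contains a $2$-sphere $S$ that does not bound a ball. Either $S$ separates $M$ into two pieces that are not punctured $3$-balls, or $S$ is non-separating, which splits off an $S^2\times S^1$ summand. In either case I cut along $S$ and cap the new boundary spheres with balls, obtaining a decomposition $M\cong M_1\# M_2$ with neither factor $S^3$, or else $M\cong M'\#(S^2\times S^1)$. I then iterate on the factors.

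The whole difficulty is termination, and this is the step I expect to be the main obstacle. I will prove Kneser's finiteness lemma: there is a constant $c(\mathcal T)$, for instance $c=6t+\dim H_1(M;\mathbb Z/2)$, such that any family of pairwise disjoint embedded $2$-spheres in $M$, none of whose complementary components is a punctured $3$-sphere, has at most $c$ members.

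To prove the lemma, I first put the family in normal form with respect to $\mathcal T$. This is done by the usual normalization moves: compressing along innermost disks in faces and pushing off arcs that return to the same edge. These moves replace spheres by spheres and never increase the relevant complexity. Crucially, they preserve the property that no complementary region is a punctured $S^3$; this is where Alexander's theorem is used. Next, each tetrahedron meets the normal spheres in at most $5$ normal disk types, which are pairwise parallel within each type. Hence all but at most $6t$ complementary regions are $I$-bundles over normal disk pieces, glued into products $S^2\times I$. Such a product region is a punctured $S^3$ unless the two boundary spheres are parallel. The $\mathbb Z/2$-homology term bounds the number of non-separating spheres. Once the lemma is in hand, the iterative splitting must stop after at most $c$ steps, because the splitting spheres accumulated along the way form such a family.

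\emph{Uniqueness.} I will follow Milnor's argument. First, I show that if $M\cong P_1\#\cdots\# P_n$, then the factors can be taken to be irreducible manifolds together with a number $k$ of $S^2\times S^1$ summands. I also show that $k$ is determined by $M$: the free rank of $\pi_1$ or $H_1$ alone does not isolate $k$, so I will instead use that $k$ equals the maximal number of disjoint spheres with connected complement. Alternatively, I will use Milnor's lemma $P\#(S^2\times S^1)\cong P\#(S^2\tilde\times S^1)$, which only matters in the non-orientable case and may be skipped here.

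For the irreducible factors, I take a family $\Sigma$ of reducing spheres realizing the first decomposition. Given a sphere $S$ from the second decomposition, I isotope $S$ transverse to $\Sigma$ so that $|S\cap\Sigma|$ is minimal. If the intersection is nonempty, I choose an innermost disk $D\subset S$. Since the pieces of $M\setminus\Sigma$ are punctured irreducible manifolds, surgery on $\Sigma$ along $D$ produces spheres, and one of them can be used to reduce the number of intersections; this contradicts minimality. Hence $S$ is disjoint from $\Sigma$ and lies in a punctured irreducible piece. There it either bounds a punctured ball or separates off exactly one factor. Inducting on the number of factors, this identifies each irreducible factor of the second decomposition with one of the first, up to homeomorphism and permutation. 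Combined with the invariance of $k$, this gives uniqueness.
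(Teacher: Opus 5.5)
The paper does not prove this theorem; it only cites Kneser for existence and Milnor for uniqueness. Your overall route is therefore the intended classical one, but the uniqueness step fails as you set it up.

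You fix $\Sigma$, isotope $S$ to minimize $|S\cap\Sigma|$, and then surger $\Sigma$ along an innermost disk $D\subset S$. That surgery changes $\Sigma$, not the isotopy class of $S$, so it contradicts nothing. You also cannot push $S$ across instead. If $\partial D$ cuts $\Sigma_j$ into disks $E_1,E_2$, irreducibility only gives that $D\cup E_i$ bounds a ball in the capped-off factor $P$. Inside the piece $X\subset M$ it bounds a \emph{punctured} ball, which may contain other spheres of $\Sigma$ and whole summands beyond them.

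In fact ``$S$ can be isotoped off $\Sigma$'' is false in general. Take $P_1\#P_2\#P_3$ with all $\pi_1(P_i)\neq 1$ and three spheres $\Sigma_i$ cutting off the $P_i$. Sliding the $P_2$-summand around an essential loop of $P_1$ carries $\Sigma_1$ to a reducing sphere whose induced free splitting of $\pi_1$ is not conjugate to that of any $\Sigma_i$. Yet an essential sphere disjoint from $\Sigma$ is parallel to some $\Sigma_i$, so this sphere cannot be isotoped off $\Sigma$.

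The missing idea (Milnor's, or Hatcher's version) is to keep the second system $T$ fixed and \emph{modify} $\Sigma$. Replace $\Sigma_j$ by the surgered sphere(s). Check, via irreducibility and Alexander's theorem, that the new system still cuts $M$ into punctured copies of the same $P_i$ together with punctured $S^3$'s. Then induct on $|T\cap\Sigma|$. Minimality must be taken over such systems, not over isotopies of $S$; only after this does your ``lies in one piece'' comparison apply.

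Two further steps need repair. First, your invariant for $k$ is only asserted, and its natural proof is circular. Having $m$ disjoint spheres with connected complement gives $M\cong M'\#m(S^2\times S^1)$, but concluding $m\le k$ presupposes that the number of $S^2\times S^1$ summands is already an invariant. Once the irreducible factors are matched, the easy route is additivity of $H_1$ under connected sum: $k=\operatorname{rank}H_1(M;\mathbb{Z})-\sum_i\operatorname{rank}H_1(P_i;\mathbb{Z})$.

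Second, your existence outline (normalization with Alexander's theorem, at most $6t$ non-product pieces) is the right one, but the region count is wrong. A complementary region made only of product pieces is an $I$-bundle over a closed surface whose boundary is a union of spheres. It is therefore either $S^2\times I$ or the twisted $I$-bundle over $\mathbb{RP}^2$, i.e.\ a punctured $\mathbb{RP}^3$. The latter can occur in orientable $M$, and your count omits it. Your phrase ``a punctured $S^3$ unless the two boundary spheres are parallel'' is also confused: the boundary spheres of $S^2\times I$ are always parallel, and it is always a punctured $S^3$, so such regions are simply excluded by hypothesis. The $\mathbb{Z}/2$-homology term must bound the punctured-$\mathbb{RP}^3$ regions (each contributes a $\mathbb{Z}/2$ summand to $H_1(M)$) together with the cycle rank of the dual graph of the sphere system. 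That is what turns a bound on regions into a bound on spheres.
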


The above theorem reduces the study of closed $3$-manifolds to the prime case. A further celebrated structure theorem is the Jaco--Shalen--Johannson decomposition (JSJ decomposition), which asserts that any prime $3$--manifold can be cut along a canonical family of incompressible tori into pieces that are either hyperbolic or Seifert fibered.  A \emph{Seifert fibered $3$--manifold} is an orientable compact $3$--manifold $M$ together with a decomposition of $M$ into disjoint simple closed curves, called \emph{fibers}, such that each fiber has a neighborhood homeomorphic to a fibered solid torus.

Following the work of Jaco--Shalen~\cite{Jaco:Shalen} and Johannson~\cite{Johannson}, and in light of the Geometrization Theorem proved by Perelman~\cite{Perelman1,Perelman2} building on Thurston’s  program~\cite{Thurston1}, the JSJ decomposition can be stated as follows.

\begin{theorem}[JSJ Decomposition Theorem]\label{thm:descomposition:JSJ}
Let $M$ be a closed, orientable, prime $3$--manifold. Then there exists a canonical (up to isotopy) collection $\mathcal{T}\subset M$ of incompressible tori, each properly embedded, $\pi_1$-injective, and with trivial normal bundle, such that each component of $M\setminus \mathcal{T}$ is either hyperbolic or Seifert fibered. 
\end{theorem}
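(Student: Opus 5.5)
The statement is the classical JSJ theorem combined with Geometrization, so I would not prove it from scratch. Instead I would assemble it from three standard ingredients: Kneser--Haken finiteness, the Jaco--Shalen--Johannson characteristic submanifold theory, and Thurston--Perelman hyperbolization.

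\emph{Step 1: the atoroidal case.} If $M$ contains no incompressible torus, take $\mathcal{T}=\emptyset$. Then $M$ is closed, orientable, prime and atoroidal. By Perelman's Geometrization \cite{Perelman1,Perelman2}, $M$ is then either hyperbolic or Seifert fibered. The case $S^2\times S^1$, which is prime but not irreducible, is itself Seifert fibered. Otherwise $M$ is irreducible, and Geometrization applies directly.

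\emph{Step 2: existence of a finite splitting family.} Now suppose $M$ contains incompressible tori. I would use Kneser--Haken finiteness, proved by normal surface theory with respect to a fixed triangulation. It gives a bound, depending only on $M$, on the number of pairwise disjoint, pairwise non-parallel incompressible surfaces in $M$. So I can choose a maximal such family $\mathcal{T}_0$ of tori. Each torus in it is $\pi_1$-injective by incompressibility, and two-sided with trivial normal bundle because $M$ is orientable. Every component of $M\setminus\mathcal{T}_0$ then contains no essential torus that is non-parallel to its boundary. Among the subfamilies of $\mathcal{T}_0$ whose complementary pieces are all either Seifert fibered or atoroidal, I would take a minimal one; this removes redundant tori between Seifert pieces whose fiberings match.

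\emph{Step 3: geometrizing the atoroidal pieces.} A piece with nonempty torus boundary is Haken, so I would apply Thurston's hyperbolization for Haken manifolds \cite{Thurston1}. It shows that an atoroidal, non-Seifert piece has interior admitting a complete finite-volume hyperbolic metric. The small exceptions, $T^2\times I$ and the twisted $I$-bundle over the Klein bottle, are Seifert fibered and are absorbed on that side of the dichotomy.

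\emph{Step 4: canonicity, the main obstacle.} The hard part is showing that the minimal family is unique up to isotopy. Here I would invoke the Jaco--Shalen--Johannson enclosing property \cite{Jaco:Shalen,Johannson}: every essential torus in $M$ can be isotoped into the characteristic Seifert submanifold. Given two minimal families, I would isotope one off the other so that each of its tori lies in a piece of the other decomposition. Inside a hyperbolic piece, an essential torus must be boundary-parallel. Inside a Seifert piece, it must be vertical. Minimality then forces the two families to coincide up to isotopy. Making these isotopy arguments precise, especially the vertical/horizontal analysis of incompressible surfaces in Seifert pieces, is the technical core, and I would quote it rather than redo it.
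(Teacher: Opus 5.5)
The paper gives no proof of this statement. It is quoted as background (Jaco--Shalen, Johannson, plus Thurston--Perelman geometrization) and used only to split $\pi_1$ of an irreducible prime piece as a graph of groups with $\mathbb{Z}^2$ edge groups. Your outline is the standard textbook derivation and, like the paper, ultimately rests on the same sources, so there is no conflict.

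The existence half (Steps 1--3) is sound as written:
\begin{itemize}
\item Kneser--Haken finiteness gives a maximal pairwise non-parallel family of incompressible tori.
\item Choosing a subfamily minimal among subfamilies of $\mathcal{T}_0$ gives one that is minimal in the absolute sense.
\item The annulus lemma together with Thurston's hyperbolization handles the atoroidal non-Seifert pieces.
\end{itemize}
What your route adds over the paper's bare citation is that it isolates where each external input enters. Normal surface theory gives finiteness, geometrization identifies the pieces, and characteristic-submanifold theory gives canonicity.

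The one loose link is in Step 4. The enclosing property does not by itself let you isotope one minimal family off another. Essential tori in general cannot be made disjoint; think of two intersecting vertical tori in a Seifert piece. What actually does the work is the annulus argument:
\begin{itemize}
\item Put $\mathcal{T}'$ transverse to $\mathcal{T}$ with the number of intersection curves minimal.
\item If some $T\in\mathcal{T}$ still meets $\mathcal{T}'$, the annuli into which $\mathcal{T}$ cuts $\mathcal{T}'$ are essential in the pieces of $M\setminus\mathcal{T}$.
\item An irreducible atoroidal piece with torus boundary containing an essential annulus is Seifert fibered, and the annuli can be taken vertical.
\item So the fiberings on the two sides of $T$ agree along $T$. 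Then $T$ could be deleted, contradicting minimality of $\mathcal{T}$.
\end{itemize}

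The same fiber-matching argument is also what justifies your final claim that ``minimality forces the two families to coincide.'' Take a torus of $\mathcal{T}'$ that is vertical and not boundary-parallel in a Seifert piece $P$ of $M\setminus\mathcal{T}$. The pieces of $M\setminus\mathcal{T}'$ on either side of it inherit, near that torus, the fibering of $P$. Their fibers therefore match across it, so it is removable, contradicting minimality of $\mathcal{T}'$. With these two points made explicit, your sketch is a correct reduction to the cited literature.
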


For surfaces, the Uniformization Theorem ensures that every closed surface admits one of the constant curvature geometries $\mathbb{S}^2$, $\mathbb{E}^2$, or $\mathbb{H}^2$ (see, e.g.,~\cite{John:Stillwell,John:W:Morgan}). By contrast, Thurston’s Geometrization Conjecture, now a theorem due to Perelman established that there are precisely eight model geometries for $3$--manifolds: hyperbolic $\mathbb{H}^3$, Euclidean $\mathbb{E}^3$, spherical $\mathbb{S}^3$, Sol, Nil, $\mathbb{S}^2\times\mathbb{E}$, $\mathbb{H}^2\times\mathbb{E}$, and the universal cover $\widetilde{SL_2(\mathbb{R})}$ (see, e.g.,~\cite[Theorem 5.1]{Sc83}). 



\begin{theorem}
The fundamental group of a compact $3$-manifold is balanced.
\end{theorem}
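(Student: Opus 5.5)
We plan to handle the closed case and then reduce the bounded case to it, as announced in the introduction. First, by \cref{conditionC:finite:extension} we may pass to the orientation double cover, so we may assume $M$ is orientable. Next we use the Prime Decomposition Theorem (\cref{split:prime}): $\pi_1(M)\cong \pi_1(P_1)*\cdots*\pi_1(P_n)$. A free product is the fundamental group of a graph of groups with trivial edge groups. By \cref{remark:BSTconditions}(1), hypothesis (b) of \cref{graph:groups:condition:C} holds automatically. Hence it suffices to show that each $\pi_1(P_i)$ is balanced.

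For a prime closed orientable $P$, either $P\cong S^2\times S^1$ (fundamental group $\Z$, trivially balanced) or $P$ is irreducible. If $\pi_1(P)$ is finite there is nothing to prove, since the group has no elements of infinite order. Otherwise we apply the JSJ decomposition (\cref{thm:descomposition:JSJ}).

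\begin{itemize}
\item If the JSJ collection is empty, $P$ is either hyperbolic or Seifert fibered. In the hyperbolic case $\pi_1(P)$ is a hyperbolic group, balanced by \cref{example:condition C}.
\item In the Seifert case, and for Seifert pieces in general, we use the short exact sequence
\[
1\to K\to \pi_1(S)\to Q\to 1,
\]
where $K$ is the cyclic fiber subgroup (finite or $\Z$) and $Q$ is a $2$-orbifold group. We take $Q$ to be virtually a surface group, hence virtually hyperbolic, virtually $\Z^2$, or finite, so $Q$ is balanced by \cref{example:condition C}, \cref{prop:nilpotent is balanced} and \cref{conditionC:finite:extension}. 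By \cref{virtually:Z:conditionA}, $K$ satisfies Condition~A, so \cref{sec:conditionC} gives that $\pi_1(S)$ is balanced.
\item Hyperbolic pieces with toral cusps have fundamental groups hyperbolic relative to $\Z^2$ subgroups, so they are balanced by \cref{prop:Cinrelhyper} (equivalently \cref{cor:CinNonPosRiemannian}). Sol manifolds also appear here; they are polycyclic, hence balanced.
\end{itemize}

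When the JSJ tori are nonempty, $\pi_1(P)$ is the fundamental group of a graph of groups whose vertex groups are the piece groups above and whose edge groups are $\Z^2$. The main obstacle is verifying hypothesis (b) of \cref{graph:groups:condition:C}, since this action is generally not Sela-acylindrical. The problem is that a fiber element of a Seifert piece can fix a long path. The plan is as follows.

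\begin{itemize}
\item \textbf{Elements fixing three consecutive edges.} An element $h$ of infinite order fixing a path of length $\ge 3$ in the Bass--Serre tree must, by minimality of the JSJ decomposition, lie in the fiber subgroup of a Seifert piece. This uses the standard fact that distinct JSJ tori adjacent to a Seifert piece have fibers that do not match across the torus, except in the excluded case where two adjacent Seifert pieces glue fibers to fibers.
\item \textbf{Consequence for fixed sets.} Fibers of adjacent Seifert pieces are not identified. Therefore the fixed set $T^{\langle h\rangle}$ is contained in the star of a Seifert vertex, possibly with stars of its translates along edges fixed by the fiber.
\item \textbf{Conclusion.} $T^{\langle h\rangle}$ then has uniformly bounded diameter, outside the closed Sol/torus-bundle cases which were already handled directly as polycyclic. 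Hence it contains no axis of a loxodromic element, so (b) holds and $\pi_1(P)$ is balanced.
\end{itemize}

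If this diameter argument is delicate, a fallback is to observe that any axis inside $T^{\langle h\rangle}$ would force $h$ to be central in a subgroup containing a loxodromic element. That would produce a $\Z\times\Z$ or Klein-bottle-type subgroup spreading over several pieces, contradicting the canonicity of the JSJ tori.

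For compact $M$ with nonempty boundary, we cap or double along the boundary as in \cite[Lemma~1.6]{matthias:stefan:henry}. This embeds $\pi_1(M)$ into the fundamental group of a closed $3$-manifold, and \cref{conditionC:finite:extension} (subgroups of balanced groups are balanced) finishes the proof.
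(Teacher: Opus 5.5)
Your outline matches the paper's: pass to the orientable double cover; treat the prime decomposition as a graph of groups with trivial edge groups; take the JSJ splitting of each irreducible prime and show its vertex groups are balanced; and handle the bounded case by embedding $\pi_1(M)$ in a closed $3$-manifold group. For the vertex groups you use the fiber sequence and Condition~A for Seifert pieces, and \cref{prop:Cinrelhyper} for hyperbolic pieces, where the paper uses discrete subgroups of $\mathrm{Isom}(\mathbb{H}^3)$; either works. The only divergence is the JSJ step, and as written it needs repair.

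The paper simply cites \cite[Lemma~2.4]{MR2660565} for acylindricity of the JSJ splitting. Your assertion that the action is ``generally not Sela-acylindrical'' is wrong and contradicts your own conclusion. Since $\pi_1(P)$ is torsion-free, ``every $T^{\langle h\rangle}$ has uniformly bounded diameter'' \emph{is} Sela-acylindricity, and outside Sol it holds with constant $4$. Your description of the fixed sets also does not give a bound: if a fiber could fix stars of translates, these could chain indefinitely. What is true is the following.
\begin{itemize}
\item By JSJ minimality, a nontrivial power $h$ of the fiber of a Seifert vertex $v$ is commensurable with the fiber of no other vertex. This includes both Seifert fiberings of an adjacent twisted $I$-bundle over the Klein bottle.
\item At hyperbolic neighbours $h$ fixes no further edge, since cusp subgroups are malnormal. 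At Seifert neighbours with hyperbolic base, a non-fiber element of a boundary torus fixes only that one edge.
\item Hence $T^{\langle h\rangle}$ is the star of $v$ together with the second edge at each neighbouring twisted $I$-bundle vertex. Such a vertex has valence $2$, and its edge group is normal of index $2$.
\item Elements that are not powers of a fiber fix paths of length at most $2$.
\end{itemize}

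Two further remarks. First, you are right to isolate Sol manifolds. There the $\mathbb{Z}^2$ edge group is normal and fixes the whole tree, so no acylindricity statement can hold. The paper's proof does not address this case, and your polycyclic argument closes it.

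Second, conclude from the uniform bound directly rather than through hypothesis (b) of \cref{graph:groups:condition:C}. In Case~1 of that proof, $g$ carries $T^{\langle h^k\rangle}$ onto $T^{\langle h^l\rangle}$, not $T^{\langle h\rangle}$ onto itself. Hypothesis (b) as stated is in fact insufficient. $BS(1,n)$ acting on its Bass--Serre tree satisfies (b): with height given by $t\mapsto 1$, the vertex stabilizers at height $j$ are $n^{j}\mathbb{Z}\le\mathbb{Z}[1/n]$. So each fixed set of a nontrivial elliptic element lies below some height, whereas every line reaches arbitrarily large height. Yet $BS(1,n)$ is not balanced.

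With the uniform bound the argument goes through. The set $F=\bigcup_{m}T^{\langle h^{m!}\rangle}$ is a bounded $g$-invariant subtree, so $g$ fixes a vertex $v\in F$. Then $g,h^{M}\in G_v$ for some $M\neq 0$, and $gh^{Mk}g^{-1}=h^{Ml}$ gives $|k|=|l|$ because $G_v$ is balanced. This caution applies equally to the paper's own appeal to that theorem.
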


\begin{proof}
Since every non-orientable $3$-manifold admits an orientable double cover, it follows from \cref{conditionC:finite:extension} that it is enough to prove the claim for orientable $3$-manifolds.
Let $M$ be an orientable $3$-manifold.  By the prime decomposition theorem (\cref{split:prime}) we may write
\[
M = P_1 \# P_2 \# \cdots \# P_n,
\]
and consequently
\[
\pi_1(M) \;\cong\; \pi_1(P_1) * \pi_1(P_2) * \cdots * \pi_1(P_n).
\]
By \cref{graph:groups:condition:C}, it suffices to prove that each $\pi_1(P_i)$ is balanced. Now let $P$ be an orientable prime $3$--manifold. Then either $P \cong S^1 \times S^2$, or $P$ is an irreducible $3$--manifold (see for example \cite[Proposition 1.4]{hatcher3mfd}). In the former case, we have $\pi_1(P) \cong \mathbb{Z}$, which is balanced. In the latter case, by the JSJ decomposition theorem (\cref{thm:descomposition:JSJ}), there exists a collection of embedded tori in $P$ such that each component of the decomposition is either a hyperbolic or a Seifert fibered manifold. In particular, this induces a splitting of $\pi_1(P)$ as a graph of groups, where the vertex groups are isomorphic to the fundamental groups of the JSJ-pieces $\pi_1(L_i)$, and the edge groups are isomorphic to $\mathbb{Z}^2$, corresponding to the fundamental groups of the JSJ-tori. By \cite[Lemma 2.4]{MR2660565}, this splitting is acylindrical. Hence by \cref{graph:groups:condition:C} it suffices to show that each $\pi_1(L_j)$ is balanced. 
This will be established in \cref{conditionC:Seifert} and \cref{conditionC:hyperbolic:manifold}.
\end{proof}

To prove Corollary \ref{conditionC:hyperbolic:manifold} and Proposition \ref{conditionC:Seifert}, we use the following theorem.

\begin{theorem}\label{conditionC:hyperbolic:n-manifolds}
    Fixing $n \geq 2$, let $G$ be a discrete subgroup of $Isom(\mathbb{H}^{n})$. Then, $G$ is balanced.
\end{theorem}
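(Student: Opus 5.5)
Let $g,h\in G$ with $h$ of infinite order and $gh^kg^{-1}=h^l$ for some $k,l\in\mathbb Z\setminus\{0\}$. The plan is to use the classification of isometries of $\mathbb H^n$. Since $G$ is discrete, an element of infinite order cannot be elliptic: a discrete subgroup of a compact point stabilizer is finite. Hence $h$ is either loxodromic or parabolic, and I would treat these two cases separately.

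\emph{Loxodromic case.} Here $h$ has a unique invariant geodesic axis $\ell$ and translation length $\tau(h)>0$. The translation length satisfies $\tau(x^m)=|m|\tau(x)$ and $\tau(yxy^{-1})=\tau(x)$. Therefore
\[
|k|\tau(h)=\tau(gh^kg^{-1})=\tau(h^l)=|l|\tau(h),
\]
which gives $|k|=|l|$ immediately. Alternatively, one could repeat the axis argument from \cref{graph:groups:condition:C}.

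\emph{Parabolic case.} The element $h$ fixes a unique point $\xi\in\partial\mathbb H^n$, and so do $h^k$ and $h^l$. Since $gh^kg^{-1}$ fixes $g\xi$, uniqueness of the fixed point gives $g\xi=\xi$. Thus $g,h\in \Gamma:=\operatorname{Stab}_G(\xi)$. I claim $\Gamma$ contains no loxodromic element. Indeed, a discrete group cannot contain a parabolic and a loxodromic element sharing a fixed point: conjugating the parabolic by powers of the loxodromic would shrink it toward the identity, contradicting discreteness. Hence $\Gamma$ is a discrete group of parabolic and elliptic isometries fixing $\xi$. Working in the upper half-space model with $\xi=\infty$, $\Gamma$ acts on horospheres centered at $\xi$ as a discrete group of Euclidean isometries of $\mathbb E^{n-1}$. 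By Bieberbach's theorem, $\Gamma$ is then virtually abelian, in fact virtually $\mathbb Z^m$. By \cref{prop:nilpotent is balanced}, $\Gamma$ is balanced, so $|k|=|l|$. Alternatively, one can argue directly: $\Gamma$ contains a finite-index translation subgroup, and conjugation acts on translation vectors by isometries, which preserve their norm. This again yields $|k|=|l|$ after passing to suitable powers of $h$.

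The main obstacle is the parabolic case, specifically justifying that $\operatorname{Stab}_G(\xi)$ acts on $\mathbb E^{n-1}$ as a \emph{discrete} group of Euclidean isometries. Discreteness in $\operatorname{Isom}(\mathbb H^n)$ does not formally pass to the horosphere action. To handle this, I would cite the standard fact, via the Margulis lemma or Ratcliffe's treatment of parabolic subgroups, that the stabilizer of a parabolic fixed point in a discrete group is virtually nilpotent (indeed virtually abelian). That is all \cref{prop:nilpotent is balanced} needs.
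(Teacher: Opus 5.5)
Your proof is correct, but it is organized differently from the paper's. The paper does not split into cases. It takes the set $P\subset\partial\mathbb{H}^n$ of boundary fixed points of $h$ (one or two points) and notes that $P$ is also the fixed set of every nonzero power of $h$, so $gP=P$. Hence $g,h$ lie in the stabilizer of a finite boundary set. That stabilizer is an elementary discrete group, hence virtually abelian, and the finite-extension lemma (\cref{conditionC:finite:extension}) finishes.

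You instead handle the loxodromic case by the translation-length identity $|k|\tau(h)=|l|\tau(h)$. This is more elementary and uses no discreteness at all; it is the same mechanism that makes CAT(0) and semi-hyperbolic groups balanced. In the parabolic case you use essentially the paper's stabilizer argument. There you also make explicit two points the paper leaves implicit. First, discreteness rules out elliptic elements of infinite order, which is needed for $P$ to have at most two points and to be the fixed set of all powers of $h$. Second, $\operatorname{Stab}_G(\xi)$ contains no loxodromic element. The second point is exactly where discreteness is indispensable: in $\mathrm{Isom}(\mathbb{H}^2)$ the maps $g\colon z\mapsto 2z$ and $h\colon z\mapsto z+1$ satisfy $ghg^{-1}=h^2$ and generate $BS(1,2)$. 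The paper's uniform route is shorter; yours shows which hypothesis is used where.

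Two smaller remarks. Your worry that discreteness does not pass to the horosphere action is unnecessary. The isometries fixing $\infty$ with trivial dilation form a closed subgroup of $\mathrm{Isom}(\mathbb{H}^n)$, topologically isomorphic to $\mathrm{Isom}(\mathbb{E}^{n-1})$ via Poincar\'e extension. So $\operatorname{Stab}_G(\xi)$ is a discrete group of Euclidean isometries, and Bieberbach applies directly.

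Your ``direct'' alternative, however, is false as stated. A discrete group of Euclidean isometries need not contain a finite-index subgroup of translations. An irrational screw motion of $\mathbb{E}^3$ generates a discrete infinite cyclic group containing no nontrivial translation, and it is a parabolic element of $\mathrm{Isom}(\mathbb{H}^4)$. If you want to avoid Bieberbach, use the minimal Euclidean displacement on $\mathbb{R}^{n-1}$ instead. A fixed-point-free Euclidean isometry is a hyperbolic isometry of the CAT(0) space $\mathbb{E}^{n-1}$. Since $g$ acts on $\mathbb{R}^{n-1}$ by an isometry rather than a similarity (this is what excluding loxodromics gives you), the computation from your loxodromic case then applies verbatim.
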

\begin{proof}
    Given Lemma \ref{conditionC:finite:extension}, we need only consider the case for $G \leq Isom^{+}(\mathbb{H}^{n})$. Let $g,h \in G$ be such that for some $k, l \in \mathbb{Z} \setminus\{0\}$ we have that $h^{k} = gh^{l}g^{-1}$. Again, by Lemma \ref{conditionC:finite:extension}, it suffices to prove that $g$ and $h$ are contained in a virtually abelian subgroup of $G$.

    Let $P \subset \partial\mathbb{H}^{n}$ be the set of fixed points of $h$. Then, $P$ is a non-empty set with at most two points, and it is also the set of fixed points of any non-trivial power of $h$. In particular, $P$ is the set of fixed points of both $h^{k} = gh^{l}g^{-1}$ and $h^{l}$. But, the set of fixed points of $gh^{l}g^{-1}$ is $g \cdot P$. This implies that if $H$ is the setwise stabilizer in $G$ of $P$, then $g,h \in H$. Since $P$ is finite, then $H$ is a discrete elementary subgroup of $G$ (Definition 3.22 of \cite{MR1792613}). By Section 4.6 of \cite{MR1792613}, this implies that $H$ is virtually abelian.
\end{proof}


\begin{corollary}\label{conditionC:hyperbolic:manifold}
 The fundamental group of a hyperbolic 3-manifold is balanced.   
\end{corollary}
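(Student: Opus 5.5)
This is a direct consequence of \cref{conditionC:hyperbolic:n-manifolds} in the case $n=3$. Let $M$ be a hyperbolic $3$-manifold, meaning $M$ is (homeomorphic to) a complete Riemannian $3$-manifold of constant curvature $-1$; in the compact-with-boundary setting of the JSJ pieces, we replace $M$ by the interior of the piece, which carries such a complete hyperbolic metric (possibly of finite volume with cusps). The interior has the same fundamental group as the piece, so it suffices to treat the complete case.

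The first step is to realise $\pi_1(M)$ as a discrete subgroup of $\mathrm{Isom}(\mathbb{H}^3)$. Since $M$ is complete with constant curvature $-1$, its universal cover is isometric to $\mathbb{H}^3$ by the Cartan--Hadamard/Killing--Hopf theorem. The deck group then acts freely and properly discontinuously by isometries. Hence the holonomy representation $\pi_1(M)\to \mathrm{Isom}(\mathbb{H}^3)$ is injective with discrete image $\Gamma$, and $\Gamma$ is torsion-free. If $M$ is non-orientable, $\Gamma$ simply lies in $\mathrm{Isom}(\mathbb{H}^3)$ rather than $\mathrm{Isom}^+(\mathbb{H}^3)$. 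This is harmless, because \cref{conditionC:hyperbolic:n-manifolds} is stated for all discrete subgroups, and in any case \cref{conditionC:finite:extension} reduces to the orientable double cover.

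The second step is to apply \cref{conditionC:hyperbolic:n-manifolds} with $n=3$, which shows that $\Gamma\cong\pi_1(M)$ is balanced. Balancedness is an isomorphism invariant, so this gives the claim.

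The only point needing care, and the main potential obstacle, is making sure that the notion of ``hyperbolic piece'' used in the JSJ argument matches a complete hyperbolic structure. For a JSJ piece $L$ with toral boundary whose interior is hyperbolic, the geometrization theorem supplies a finite-volume complete hyperbolic metric on $\mathrm{int}(L)$, and $\pi_1(L)\cong\pi_1(\mathrm{int}(L))$. The case of a closed hyperbolic $3$-manifold is covered directly. As an alternative route for the cusped case, one could also invoke \cref{cor:CinNonPosRiemannian}, since constant curvature $-1$ is pinched negative curvature.
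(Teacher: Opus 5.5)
Your proof is correct and follows the paper's approach: the paper gives no separate proof and treats this corollary as the case $n=3$ of \cref{conditionC:hyperbolic:n-manifolds}, applied via the holonomy identification of $\pi_1(M)$ with a discrete subgroup of $\mathrm{Isom}(\mathbb{H}^3)$. Your extra care about passing to the interior of a JSJ piece, and the alternative route through \cref{cor:CinNonPosRiemannian}, are both valid but not needed.
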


  Let $M$ be a Seifert fibered $3$--manifold. Given a decomposition of $M$ into circles, one can collapse each fiber to a point. The resulting space is a $2$--orbifold $B$, called the \emph{base orbifold} of $M$; see \cite[Section~3]{Sc83}. The orbifold $B$ has an \emph{orbifold fundamental group} $\pi_1^{\mathrm{orb}}(B)$, which in general does not coincide with the fundamental group of its underlying topological space. A $2$--orbifold $B$ falls into exactly one of the following classes, depending on the geometry of its universal orbifold cover: bad, spherical, hyperbolic, or Euclidean.
\begin{proposition}\label{conditionC:Seifert}
Let $G$ be the fundamental group of a Seifert $3$-manifold. Then 
\begin{enumerate}[a)]
    \item $G$ is balanced,
    \item  the
commensurators of virtually cyclic subgroups of \( G \) are realized as normalizers.
\end{enumerate}

\end{proposition}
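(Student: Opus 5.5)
We will use the Seifert fibration to write $G$ as an extension. Collapsing fibers gives a short exact sequence
\[
1 \to K \to G \to \pi_1^{\mathrm{orb}}(B) \to 1,
\]
where $K$ is the cyclic subgroup generated by a regular fiber and $B$ is the base orbifold. We first dispose of the degenerate cases. If $G$ is finite (spherical geometry, $S^3$), the statement is vacuous. If $G$ is virtually cyclic (geometry $\mathbb{S}^2\times\mathbb{E}$), both claims follow from \cref{virtually:Z:conditionA} and \cref{conditionC:finite:extension}. Hence we may assume $K\cong\mathbb{Z}$ is infinite.

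For item a) we apply \cref{sec:conditionC} to the sequence above. The kernel $K\cong\mathbb{Z}$ satisfies Condition~A, since an automorphism of $\mathbb{Z}$ is $\pm\mathrm{id}$; alternatively use \cref{virtually:Z:conditionA}. It therefore suffices to show that $Q=\pi_1^{\mathrm{orb}}(B)$ is balanced. The base $B$ is a good $2$-orbifold, because a Seifert manifold with infinite fiber group has good base, or else one of the excluded finite or virtually cyclic cases occurs. Three geometries remain:
\begin{itemize}
\item If $B$ is spherical, $Q$ is finite and hence balanced vacuously.
\item If $B$ is Euclidean, $Q$ is a crystallographic group. It is therefore virtually $\mathbb{Z}^2$, and by \cref{prop:nilpotent is balanced} it is balanced.
\item If $B$ is hyperbolic, $Q$ is a discrete subgroup of $Isom(\mathbb{H}^2)$, and \cref{conditionC:hyperbolic:n-manifolds} gives that $Q$ is balanced.
\end{itemize}
Then \cref{sec:conditionC} yields that $G$ is balanced.

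For item b) we use \cref{conditionC:and:roots}. This requires a finite-index normal subgroup of $G$ with unique roots. We pass to a finite cover that is a circle bundle over an orientable surface $\Sigma$ with orientable fibers. This is possible because $B$ is good, so it has a finite manifold cover; pulling back and taking a further double cover if necessary makes the fibration an honest oriented $S^1$-bundle. We then take the normal core of this finite-index subgroup. The core is again the fundamental group of such a bundle, possibly over a different surface, since finite covers of oriented circle bundles over surfaces are of the same type.

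For $G'=\pi_1$ of an oriented circle bundle over $\Sigma$ with $\chi(\Sigma)\le 0$, we have a central extension $1\to\mathbb{Z}\to G'\to\pi_1(\Sigma)\to1$. Here $\pi_1(\Sigma)$ is torsion-free with unique roots (free, $\mathbb{Z}^2$, or a hyperbolic surface group). Suppose $f^n=g^n$. Projecting to $\pi_1(\Sigma)$ gives $\bar f=\bar g$, so $g=fz$ with $z$ central. Then $g^n=f^nz^n$, so $z^n=1$, and since the center is torsion-free, $z=1$. So $G'$ has unique roots, and \cref{conditionC:and:roots} together with a) gives b). If $\chi(\Sigma)>0$, the group is finite or virtually cyclic, and those cases were handled at the start, since finite or virtually $\mathbb{Z}$ groups trivially have such a subgroup.

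The main obstacle is item b): constructing a finite-index normal subgroup that is a genuine central extension of a surface group, and keeping that subgroup normal while controlling the non-orientable base and fiber cases. For this we rely on standard facts about Seifert manifolds from \cite{Sc83}. Item a) is a direct application of earlier results once the base orbifold groups are classified.
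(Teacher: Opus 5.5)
Your proof is correct. For item a) you follow the paper essentially verbatim: the same exact sequence from Scott, Condition~A for $K\cong\mathbb{Z}$, and \cref{sec:conditionC} with a case split on the geometry of $B$. The only cosmetic differences are that you treat spherical or bad bases via $Q$ finite rather than $G$ virtually cyclic, and the Euclidean case via virtual nilpotence rather than polycyclicity.

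For item b) you take a genuinely different route to the finite-index normal subgroup with unique roots required by \cref{conditionC:and:roots}. The paper cites Boyer--Rolfsen--Wiest: $\pi_1(M)$ is virtually bi-orderable, bi-orderable groups have unique roots, and the normal core inherits unique roots because the property passes to subgroups. You instead pass to a finite cover that is an oriented circle bundle over an orientable surface and verify unique roots by hand. In a central extension $1\to\mathbb{Z}\to G'\to\pi_1(\Sigma)\to 1$ with $\pi_1(\Sigma)$ having unique roots, $f^n=g^n$ forces $g=fz$ with $z$ central and $z^n=1$, hence $z=1$.

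The paper's route is a one-line citation but leans on a substantial orderability theorem. Yours is self-contained and uses only standard Seifert-fibration facts, namely that good base orbifolds are finitely covered by surfaces. It also makes visible why the base must be made orientable: in the Klein bottle group $\langle a,b\mid bab^{-1}=a^{-1}\rangle$ one has $(ab)^2=b^2$, so unique roots fails. It is essentially the mechanism behind the Boyer--Rolfsen--Wiest result with the orderability stripped out.

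One simplification: your argument that the normal core is again a circle-bundle group is unnecessary. Unique roots is inherited by every subgroup, so the core of your bundle subgroup automatically has it.
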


\begin{proof}
First we prove item $a)$. Let \( M \) be a Seifert $3$-manifold. Then by \cite[Lemma 3.2]{Sc83} its fundamental group fits into a short exact sequence
\begin{equation}\label{eq:sei}
1 \longrightarrow K \longrightarrow \pi_1(M) \longrightarrow \pi_1^{\mathrm{orb}}(B) \longrightarrow 1,
\end{equation}
where \( K \cong \mathbb{Z} \) is the infinite cyclic subgroup generated by a regular fiber, except in the case where \( M \) is finitely covered by \( S^3 \), in which case \( \pi_1(M) \) is finite and hence trivially balanced.

We analyze according to the geometry of the base orbifold \( B \):

\smallskip

\noindent
\emph{Case 1:} \( B \) is a bad orbifold or a spherical orbifold (i.e., modeled on \( S^2 \)).  
In this case, \( \pi_1^{\mathrm{orb}}(B) \) is finite, hence \( \pi_1(M) \) is virtually cyclic. In particular, it is balanced.

\smallskip

\noindent
\emph{Case 2:} \( B \) is modeled on the hyperbolic plane \( \mathbb{H}^2 \).  
Then \( \pi_1^{\mathrm{orb}}(B) \) is a Fuchsian group, which is balanced  by \cref{conditionC:hyperbolic:n-manifolds}. Applying \cref{sec:conditionC} to the extension \eqref{eq:sei}, we conclude that \( \pi_1(M) \) is balanced.

\smallskip

\noindent
\emph{Case 3:} \( B \) is modeled on the Euclidean plane.  
Then \( \pi_1^{\mathrm{orb}}(B) \) is virtually polycyclic, and thus balanced. Again, applying \cref{sec:conditionC} to \eqref{eq:sei}, it follows that \( \pi_1(M) \) is balanced.

We now prove item b). By \cref{conditionC:and:roots} and item~(a), it suffices to show that $\pi_1(M)$ has a normal finite-index subgroup satisfying uniqueness of roots. By \cite[Corollary~1.6]{BoyerRolfsenWiest2005}, the group $\pi_1(M)$ is virtually bi-orderable. Hence, there is a finite-index subgroup $H < \pi_1(M)$ that is bi-orderable and thus satisfies uniqueness of roots \cite{Rolfsen2014}. Consider the action of $\pi_1(M)$ on the set of left cosets $\pi_1(M)/H$; its kernel is a normal subgroup $K \trianglelefteq \pi_1(M)$ of finite index, contained in $H$. In particular, $K$ also satisfies uniqueness of roots. \end{proof}

\subsection{Balanced groups and mapping class groups}

\begin{definition}
Let \( M \) be a connected, compact, orientable topological manifold (of arbitrary dimension). The \emph{mapping class group} of \( M \) is defined as
\[
\mcg(M) := \pi_0(\mathrm{Homeo}^+(M)),
\]
where \( \mathrm{Homeo}^+(M) \) denotes the topological group of orientation-preserving homeomorphisms of \( M \).
\end{definition}

In \cite[Proposition 4.1]{JPT16}, it was shown that the mapping class group of a surface is balanced. In what follows, we show that this result also holds for mapping class groups of certain $3$-manifolds.

\begin{proposition}
Let \( S_g \) be a closed orientable surface of genus \( g \geq 1 \), and let \( X^k_g \) be a 3-manifold admitting the structure of an oriented circle bundle
\[
S^1 \hookrightarrow X^k_g \xrightarrow{p} S_g,
\]
with Euler number \( k \in \mathbb{Z} \). Then the mapping class group \( \mcg(X^k_g) \) is balanced.
\end{proposition}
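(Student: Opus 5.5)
The plan is to identify $\mcg(X^k_g)$ explicitly as an extension of two groups already known to be balanced, and then apply \cref{sec:conditionC}. For circle bundles over closed surfaces of genus $g\ge 1$ there is a classical description of the mapping class group of the total space, via fiber-preserving homeomorphisms. In the relevant cases $X^k_g$ is Haken or a Seifert manifold with unique Seifert fibering up to isotopy (we treat the torus case $g=1$, $k=0$, i.e.\ $T^3$, separately). Every orientation preserving homeomorphism is then isotopic to a fiber-preserving one, and we get a short exact sequence
\[
1 \longrightarrow H^1(S_g;\mathbb{Z}) \longrightarrow \mcg(X^k_g) \longrightarrow \Gamma \longrightarrow 1,
\]
where $\Gamma$ is a finite-index subgroup of the extended mapping class group $\mcg^{\pm}(S_g)$. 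The kernel $H^1(S_g;\Z)\cong\Z^{2g}$ consists of the vertical Dehn twists, obtained by twisting along the fibers over a curve. The map to $\Gamma$ records the induced homeomorphism on the base. Orientation reversal on the base must be compensated by reversal on the fiber, and this is allowed only when $k=0$. I would cite the known computation of $\mcg$ of Seifert fibered spaces (Johannson, Waldhausen, McCullough) rather than reprove it.

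Given this sequence, the argument has three steps.

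First, $\mcg^{\pm}(S_g)$ is balanced. For $g\ge 2$ this follows from \cite[Proposition 4.1]{JPT16}, possibly combined with \cref{conditionC:finite:extension} to pass from $\mcg(S_g)$ to the index-two extension. For $g=1$ we have $\mcg^{\pm}(T^2)\cong GL_2(\Z)$, which is virtually free and hence balanced. Since $\Gamma$ is a subgroup, it is balanced by \cref{conditionC:finite:extension}.

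Second, the kernel $\Z^{2g}$ is free abelian, so it has property (MAX) by \cref{remark:examples MAX}. It therefore satisfies Condition~A by \cref{proposition:MAX implies A}.

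Third, \cref{sec:conditionC} (equivalently \cref{propertyC:sec}) then shows that $\mcg(X^k_g)$ is balanced.

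The exceptional case $X^0_1=T^3$ needs separate handling, since its Seifert fibering is not unique. Here $\mcg(T^3)\cong SL_3(\Z)$. Since $SL_3(\Z)$ is not covered by the sequence above, I would instead use that infinite order elements of $SL_3(\Z)$ are undistorted (Lubotzky–Mozes–Raghunathan), so it is balanced by \cref{example:condition C}(3). Alternatively, one can argue directly: $gA^kg^{-1}=A^l$ forces the eigenvalue multisets of $A^k$ and $A^l$ to agree, and comparing the moduli of eigenvalues (or, when all eigenvalues are roots of unity, the unipotent part) gives $|k|=|l|$.

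The main obstacle is the first input, namely establishing the exact sequence with precisely identified kernel and image, including the case distinction in $k$ and the Nil case $g=1$, $k\neq 0$. If the image is not identified exactly, it suffices to know that it is a subgroup of $\mcg^{\pm}(S_g)$ and that the kernel is a quotient of $\Z^{2g}$ that is still free abelian. I would verify this by checking that vertical twists act faithfully on $H_1(X^k_g)$ modulo torsion, or on $\pi_1$ via $\mathrm{Out}(\pi_1 X^k_g)$.
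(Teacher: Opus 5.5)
For $(g,k)\neq(1,0)$ your argument is the paper's. The paper takes the index-two subgroup $G\le\mcg(X^k_g)$ that maps onto $\mcg(S_g)$ with kernel $H^1(S_g;\Z)$. It gets $G$ balanced from \cite[Proposition 4.1]{JPT16}, property (MAX) of $\Z^{2g}$ and Corollary~\ref{propertyC:sec}, and then passes to $\mcg(X^k_g)$ by Lemma~\ref{conditionC:finite:extension}. Mapping the whole group into $\mcg^{\pm}(S_g)$, as you do, is equivalent.

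The paper's proof never treats $(g,k)=(1,0)$, so you are right that $T^3$ needs its own argument. The one you give has a genuine gap.

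\textbf{The $T^3$ case.} Infinite-order elements of $SL_3(\Z)$ are \emph{not} undistorted. Lubotzky--Mozes--Raghunathan show that nontrivial unipotents are exponentially distorted. This is already visible in the subgroup $\Z^2\rtimes_A\Z\le SL_3(\Z)$, with $A\in SL_2(\Z)$ hyperbolic acting on the third column. So item (3) of Example~\ref{example:condition C} does not apply.

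Your eigenvalue comparison does handle $h$ with an eigenvalue of modulus $\neq 1$. In the remaining case, however, ``the unipotent part'' gives nothing by itself: $\mathrm{diag}(2,1,\tfrac12)\in SL_3(\mathbb{Q})$ conjugates $I+E_{13}$ to $(I+E_{13})^4$. Any proof must therefore use integrality. Here is the missing step.
\begin{itemize}
\item By Kronecker, all eigenvalues of $h$ are roots of unity. Hence $u=h^m$ is unipotent for some $m\ge 1$, and $u\neq I$ because $h$ has infinite order.
\item From $gh^kg^{-1}=h^l$ you get $gu^kg^{-1}=u^l$. Setting $N=\log u$, a nonzero rational nilpotent matrix, this gives $gNg^{-1}=\tfrac{l}{k}N$.
\item The map $X\mapsto gXg^{-1}$ and its inverse both preserve $M_3(\Z)$, so the map lies in $GL_9(\Z)$. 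Its rational eigenvalue $l/k$ is therefore an integer.
\item Likewise $k/l$ is an integer, being an eigenvalue of $X\mapsto g^{-1}Xg$. Hence $|k|=|l|$.
\end{itemize}

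\textbf{Two inaccuracies in your description of the sequence.} Neither breaks the argument.

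First, orientation-reversing classes of $S_g$ lift for every $k$: reverse the fiber as well, which leaves the Euler number unchanged and preserves the total orientation. So the image is all of $\mcg^{\pm}(S_g)$. This is harmless, since you only use that the image is a subgroup.

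Second, in the Nil case $g=1$, $k\neq 0$, the kernel is not $\Z^2$. In $\pi_1X^k_1=\langle a,b,z\mid [a,b]=z^k,\ z \text{ central}\rangle$, conjugation by $a$ fixes $a$ and sends $b\mapsto bz^k$. That is the $k$-th power of a vertical twist. Since $X^k_1$ is Haken, the vertical twists generate only $(\Z/|k|)^2$ in $\mcg(X^k_1)$, and the faithfulness check you propose would fail. The conclusion survives because finite groups satisfy Condition~A vacuously; in fact every finitely generated abelian group satisfies it, by the same integrality argument. So Theorem~\ref{sec:conditionC} still applies. The paper's statement of the cited sequence makes the same rank-$2g$ claim in this case.
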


\begin{proof}
According to \cite{lei:bena}, when \( (g,k)\neq (1,0) \), the mapping class group \( \mcg(X^k_g) \) fits into the short exact sequence:
\[
1 \to H^1(S_g;\mathbb{Z}) \to G \to \mathrm{Mod}(S_g) \to 1,
\]
where the kernel is a free abelian group of rank \( 2g \) and $G$ is a finite-index subgroup of $\mcg(X^{k}_{g})$. By \cite[Proposition 4.1]{JPT16} \( \mcg(S_g) \) is balanced, and since $H^1(S_g;\mathbb{Z})$ satisfies property (MAX), by \cref{propertyC:sec} we have that $G$ is balanced. Then Lemma \ref{conditionC:finite:extension} implies that $\mcg(X^{k}_{g})$ is balanced.
\end{proof}

\section{Virtually cyclic dimension of poly-surface groups}\label{virtually:cyclic:dimension}
In this section we obtain an upper bound for the virtually cyclic dimension of normally poly-surface groups and normally poly-free groups. Our first step is to show that these groups are balanced. In fact, we prove that every normally poly-hyperbolic group is balanced. Once this is established, we apply \cref{luck:condition C} to reduce the problem to the computation of the proper geometric dimension of the Weyl groups of infinite cyclic subgroups. 

Let \( \mathcal{P} \) be a class of groups (for example, hyperbolic groups, amenable groups, free groups, etc.). 
We say that a group \( G \) is \emph{poly-\(\mathcal{P}\)} if there exists a filtration
\[
1 = G_0 \lhd G_1 \lhd \cdots \lhd G_{n-1} \lhd G_n = G,
\]
such that each quotient \( G_{i+1}/G_i \), for \( 0 \leq i \leq n-1 \), belongs to the class \( \mathcal{P} \).
Furthermore, we say that \( G \) is \emph{normally poly-\(\mathcal{P}\)} if, in addition, each subgroup \( G_i \) is normal in \( G \). The minimal integer \( n \) for which such a filtration exists is called the \emph{length} of \( G \). 

The class of \emph{normally poly-\(\mathcal{P}\)} groups is under quotients.

\begin{lemma}\label{normally:quotient}
Let $G$ be a normally poly-$\mathcal{P}$ group of length $n$, with  filtration
\[
1 = G_0 \lhd G_1 \lhd G_2 \lhd \cdots \lhd G_n = G.
\]
Then, for any $1 \leq i \leq n$ the quotient group $G/G_i$ is a normally poly-$\mathcal{P}$ group of length at most $n-i$.
\end{lemma}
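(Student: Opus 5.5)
The idea is to push the given filtration down to the quotient $G/G_i$ using the correspondence theorem, and then check the three requirements: each term is normal in the quotient, each factor lies in $\mathcal{P}$, and the number of steps is $n-i$.

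\textbf{Step 1: the candidate filtration.} Fix $1\le i\le n$ and let $\pi\colon G\to G/G_i$ be the quotient map. For $0\le j\le n-i$ set $\overline{G}_j := G_{i+j}/G_i=\pi(G_{i+j})$. This makes sense because $G_i\le G_{i+j}$ and $G_i$ is normal in $G$. It gives
\[
1=\overline{G}_0\le \overline{G}_1\le\cdots\le \overline{G}_{n-i}=G/G_i .
\]

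\textbf{Step 2: normality.} Each $G_{i+j}$ is normal in $G$, so its image $\pi(G_{i+j})$ under the surjection $\pi$ is normal in $G/G_i$. In particular $\overline{G}_j\lhd \overline{G}_{j+1}$, and every $\overline{G}_j$ is normal in the whole quotient.

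\textbf{Step 3: the factors.} By the third isomorphism theorem,
\[
\overline{G}_{j+1}/\overline{G}_j=(G_{i+j+1}/G_i)\big/(G_{i+j}/G_i)\cong G_{i+j+1}/G_{i+j}.
\]
The right-hand side belongs to $\mathcal{P}$ by hypothesis. I am assuming here that $\mathcal{P}$ is closed under isomorphism, which holds for all the classes considered in the paper.

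\textbf{Step 4: the length.} Steps 1–3 give a normal filtration of $G/G_i$ with $n-i$ steps and all factors in $\mathcal{P}$. So $G/G_i$ is normally poly-$\mathcal{P}$, and since the length is a minimum over such filtrations, its length is at most $n-i$. The case $i=n$ is degenerate: the quotient is trivial and the empty filtration has length $0$.

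No step presents a real obstacle. The only point that needs care is in Step 2: normality in $G/G_i$ comes from normality of $G_{i+j}$ in $G$, which is exactly the "normally" hypothesis. Mere subnormality would not pass to the quotient in the required form.
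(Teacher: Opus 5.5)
Your proof is correct and matches the paper's argument: you project the normal filtration to $G/G_i$, note that the images of the $G_j$ (normal in $G$) stay normal in the quotient, and identify the successive factors with $G_{j+1}/G_j$ via the third isomorphism theorem, which gives a normal filtration with $n-i$ steps. Your indexing $\overline{G}_j = G_{i+j}/G_i$ is also cleaner than the paper's, which writes ``$j \ge 2$'' where $j \ge i+1$ is meant.
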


\begin{proof}
Consider the canonical projection $\pi \colon G \to G/G_i$. Since $G_i \triangleleft G$.
For  each $j \ge i$, the subgroup $G_j$ is normal in $G$, hence by the correspondence theorem its image
\[
\pi(G_j) = G_j/G_i
\]
is a normal subgroup of $G/G_i$. Consequently, we obtain a normal filtration of length $n-i$
\[
1 = G_i/G_i \lhd G_{i+1}/G_i \lhd \cdots \lhd G_n/G_i = G/G_i.
\]

Moreover, for each $j \ge 2$, the successive quotients satisfy
\[
\frac{G_j/G_i}{G_{j-1}/G_i} \cong \frac{G_j}{G_{j-1}},
\]
where the isomorphism is induced by the projection $\pi$ and follows from the Third Isomorphism Theorem.
By hypothesis, each group $G_j/G_{j-1}$ belongs to $\mathcal{P}$. Therefore, the same holds for each quotient
\[
(G_j/G_i)/(G_{j-1}/G_i).
\]
 Hence, $G/G_i$ is a normally poly-surface group of length at most $n-i$.
\end{proof}

\begin{lemma}\label{normally:surface:groups:closed:under:subgroups}
Let $G$ be a normally poly-surface group of length $n$, with filtration
\[
1 = G_0 \lhd G_1 \lhd \cdots \lhd G_n = G
\]
such that $G_i \lhd G$ for every $i$ and
\[
G_i/G_{i-1} \cong \pi_1(S_i)
\]
for some connected surface $S_i$.
Then every subgroup $H \le G$ is normally poly-surface of length at most $n$.
\end{lemma}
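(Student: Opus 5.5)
The natural approach is to intersect the given filtration with $H$. I will set $H_i := H \cap G_i$ for $0 \le i \le n$. Since each $G_i$ is normal in $G$, each $H_i$ is normal in $H$: for $h\in H$ we have $h(H\cap G_i)h^{-1} = H \cap hG_ih^{-1} = H\cap G_i$. This gives a filtration
\[
1 = H_0 \lhd H_1 \lhd \cdots \lhd H_n = H
\]
by subgroups that are normal in $H$. The only thing to check is that the successive quotients are of the required type.

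For the quotients, I will use the second isomorphism theorem. The map $H_i \to G_i/G_{i-1}$, restricted from the projection, has kernel $H_i\cap G_{i-1} = H_{i-1}$. Hence
\[
H_i/H_{i-1} \cong H_iG_{i-1}/G_{i-1} \le G_i/G_{i-1} \cong \pi_1(S_i).
\]
So each quotient is isomorphic to a subgroup of a surface group $\pi_1(S_i)$.

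The main step, and the main obstacle, is to show that a subgroup of the fundamental group of such a surface is again the fundamental group of a surface in the allowed class. By covering space theory, a subgroup $K\le\pi_1(S_i)$ equals $\pi_1(\widetilde S)$, where $\widetilde S\to S_i$ is the corresponding (possibly infinite-sheeted) covering. Then $\widetilde S$ is a connected surface, possibly non-orientable and possibly of infinite type, which is exactly why the class in the definition allows infinite type. I would then verify the remaining conditions.
\begin{itemize}
\item \emph{Euler characteristic.} For finite index, $\chi(\widetilde S) = [\pi_1(S_i):K]\,\chi(S_i)\le 0$. For infinite index, either $\widetilde S$ is non-compact or $K$ is free, hence the fundamental group of a planar or bounded surface with non-positive Euler characteristic.
\item \emph{Degenerate cases.} The trivial group and $\mathbb Z$ need care. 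If $K=1$, the quotient is trivial and the step can simply be omitted, which only shortens the length. If $K\cong\mathbb Z$, it is realized by an annulus, which has Euler characteristic $0$.
\item \emph{Boundary.} The boundary of $\widetilde S$ may be non-compact, for example lines covering boundary circles. To avoid this, I would replace $\widetilde S$ by a surface with compact boundary having the same fundamental group. This is always possible, since free groups of any countable rank are fundamental groups of planar surfaces without boundary or with compact boundary, and a non-free $K$ has finite index.
\end{itemize}

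Finally, I will remove repeated terms $H_{i-1}=H_i$ from the filtration. Since deleting terms preserves normality in $H$ and leaves the remaining quotients unchanged, this gives a normal filtration of $H$ of length at most $n$ whose quotients are surface groups in the allowed class. Therefore $H$ is normally poly-surface of length at most $n$.
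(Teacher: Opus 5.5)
Your proposal is correct and follows the same route as the paper: you intersect the filtration with $H$, check that each $H_i$ is normal in $H$, and use the second isomorphism theorem to embed $H_i/H_{i-1}$ into $G_i/G_{i-1}\cong\pi_1(S_i)$. Where the paper simply asserts that surface groups are closed under subgroups, you justify it: non-free subgroups have finite index and come from closed covers, and free subgroups of countable rank are realized by planar surfaces with compact or empty boundary. You also correctly delete trivial quotients, which is needed because the trivial group is not the fundamental group of any surface with $\chi\le 0$.
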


\begin{proof}
Let $H \le G$ and set $H_i := G_i \cap H$ for $i=0,\dots,n$. Then
\[
1 = H_0 \le H_1 \le \cdots \le H_n = H.
\]

We first show that $H_i \triangleleft H$ for every $i$.  
Let $h \in H$ and $x \in H_i$. Since $x \in G_i$ and $G_i \lhd G$, we have
$hxh^{-1} \in G_i$. Moreover, as $H$ is a subgroup, $hxh^{-1} \in H$. Hence
$hxh^{-1} \in G_i \cap H = H_i$, proving $H_i \lhd H$.

For each $i \ge 1$, because $G_{i-1} \lhd G_i$, the second isomorphism theorem yields
\[
\frac{H_i}{H_{i-1}}
=
\frac{G_i \cap H}{G_{i-1} \cap H}
\;\cong\;
\frac{(G_i \cap H)G_{i-1}}{G_{i-1}}.
\]
Since $(G_i \cap H)G_{i-1} \le G_i$, we obtain an embedding
\[
\frac{H_i}{H_{i-1}}
\;\hookrightarrow\;
\frac{G_i}{G_{i-1}}
\;\cong\;
\pi_1(S_i).
\]

Thus $H_i/H_{i-1}$ is isomorphic to a subgroup of $\pi_1(S_i)$.  Since surface groups are closed under subgroups we have that $H_i/H_{i-1}$ is a surface group. 

We conclude that $H$ admits a filtration
\[
1 = H_0 \lhd H_1 \lhd \cdots \lhd H_n = H
\]
in which each term is normal in $H$ and each successive quotient is a surface group. Hence $H$ is normally poly-surface.
\end{proof}

 


\begin{proposition}\label{hyperbolic:groups:propertyC}
 If $G$ is a virtually normally poly-hyperbolic group of length $n$, then $G$ is balanced.   
\end{proposition}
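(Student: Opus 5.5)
By \cref{conditionC:finite:extension}, it suffices to treat the case where $G$ itself is normally poly-hyperbolic of length $n$. Fix a filtration
\[
1 = G_0 \lhd G_1 \lhd \cdots \lhd G_n = G
\]
with each $G_i \lhd G$ and each $G_{i}/G_{i-1}$ hyperbolic. The plan is to induct on $n$, peeling off the bottom term $G_1$ and applying \cref{propertyC:sec} to the short exact sequence
\[
1 \to G_1 \to G \to G/G_1 \to 1 .
\]

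For $n=1$, $G=G_1$ is hyperbolic and hence balanced by \cref{example:condition C}(1). For $n\ge 2$, \cref{normally:quotient} shows that $G/G_1$ is normally poly-hyperbolic of length at most $n-1$, so it is balanced by the induction hypothesis. The kernel $G_1 \cong G_1/G_0$ is hyperbolic, so it has property (MAX) by \cref{remark:examples MAX}(1). Then \cref{propertyC:sec} gives that $G$ is balanced.

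The main point to verify is that hyperbolic groups really do satisfy (MAX) in the form used, i.e. that every infinite cyclic subgroup lies in a unique maximal virtually cyclic subgroup. This is the cited statement from Ghys--de la Harpe: the maximal one is the stabilizer of the pair of fixed points $\{h^{\pm\infty\}}\}$ in $\partial G_1$, which is the elementary closure of $\langle h\rangle$. Any virtually cyclic subgroup containing $h$ preserves this pair, so it is contained in that stabilizer.

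Equivalently, one can bypass (MAX) and apply \cref{sec:conditionC} directly. For that we would check Condition~A for a hyperbolic group. If $\varphi\in\Aut(G_1)$ satisfies $\varphi(h^k)=h^l$, then $\varphi$ preserves the elementary closure $E(h)$ of $\langle h\rangle$. Condition~A then follows from \cref{virtually:Z:conditionA} applied to $E(h)$; this is exactly the proof of \cref{proposition:MAX implies A}.

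Note that normality of $G_1$ in $G$ is what makes the extension exist at all. Normality of all the $G_i$ is what lets the quotient $G/G_1$ inherit a normal poly-hyperbolic filtration of shorter length, which is where the hypothesis ``normally'' enters.
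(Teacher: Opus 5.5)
Your proposal is correct and is essentially the paper's proof. Both reduce to the normally poly-hyperbolic case via \cref{conditionC:finite:extension}, induct on the length by splitting off $G_1$, control the quotient with \cref{normally:quotient}, use (MAX) for the hyperbolic kernel, and conclude with \cref{propertyC:sec}. Your alternative route through Condition~A and \cref{sec:conditionC} is the same argument with the (MAX)$\Rightarrow$A step written out, which is exactly how the paper obtains \cref{propertyC:sec}.
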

\begin{proof}
 By Lemma \ref{conditionC:finite:extension}, we may suppose that $G$ is a normally poly-hyperbolic group of length $n$. The proof is by induction on $n$. 
 
 If $n=1$, then $G$ is a hyperbolic group, and by Example \ref{example:condition C} (1) we have that $G$ is balanced. Suppose the result holds for normally poly-hyperbolic groups of length $n\leq k-1$.

Let $G$ be a normally poly-hyperbolic group  of length $k$. Then there is a filtration  of $G$ by subgroups   $1=G_0\lhd G_1\lhd \cdots \lhd G_{k-1}\lhd G_k=G$  such that for each $i$  $G_i\lhd G$ and the quotient $G_{i+1}/G_i$ is a hyperbolic group. We consider the following short exact sequence 
$$1\to G_{1}\to G\xrightarrow[]{p} G/G_{1}\to 1.$$ 
By \cref{normally:quotient}, $G/G_1$ is a  normally poly-hyperbolic group of length at most $k-1$ and $G_1$ is a hyperbolic group. This implies that $G_1$ has property (MAX) due to Remark \ref{remark:examples MAX}(1), and $G/G_1$ is balanced due to the induction hypothesis. Then, the claim follows from \cref{propertyC:sec}.
\end{proof}

\begin{remark}\label{rem:poly-free and poly-surface are balanced}
The class of normally poly-hyperbolic groups includes, in particular, all normally poly-finitely-generated-free groups (poly-f.g.-free groups). Moreover, exactly the same argument used in the proof above applies directly to all normally poly-free groups (without the finite-generation condition), and also to all normally poly-surface groups.
\end{remark}

\subsection*{Virtually cyclic dimension of poly-surface groups }

In this subsection we compute some upper bounds for some geometric dimensions for poly-surfaces groups and poly-free groups. In particular we prove Theorems \ref{poli:surface:th1} and \ref{poli:free:virtually:cy} (Theorems \ref{thm:poly-surface} and \ref{thm:poly-free} below, respectively).

Let $G$ be a discrete group and a family $\calF$ of subgroups of $G$. We define the geometric dimension $\gd_{\calF}(G)$ as the minimum integer $n$ such that there is a model $X$ for the classifying space $E_{\calF}G$ of dimension $n$. In the literature when the family $\calF$ is the trivial family,  the finite subgroups and the cyclic subgroups, the geometric dimension is denoted by $\gd$, $\gdfin(G)$, $\gdvc(G)$ respectively.

\begin{remark}
    Note that if $\mathcal{TF}$ is the collection of torsion-free groups, then any poly-$\mathcal{TF}$ group is also torsion-free. Indeed, let $G$ be a poly-$\mathcal{TF}$ group with filtration $1 = G_0 \lhd G_1 \lhd \cdots \lhd G_{n-1} \lhd G_n = G$; if there exists a finite-order element $g \in G= G_n$, since $G_n/G_{n-1}$ is torsion free, then $g$ has to be an element of $G_{n-1}$. Repeating this argument $n-1$ times, we get that $g \in G_0$ which means it is trivial.
\end{remark}

The following is a mild generalization of  \cite[Theorem 1.1]{MR4817678}.

\begin{lemma}
Let $\mathcal{TF}$ be the collection of torsion-free groups, and let $G$ be a poly-$\mathcal{TF}$ group of length $n$, with a filtration
\[
1 = G_0 \lhd G_1 \lhd \cdots \lhd G_{n-1} \lhd G_n = G.
\]
Then
\[
\gd(G) \leq \sum_{i=1}^n \gd(G_i / G_{i-1}).
\]
\end{lemma}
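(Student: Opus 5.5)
We argue by induction on $n$; the key tool is the behaviour of $\gd$ under extensions with torsion-free kernel. For $n=1$ there is nothing to prove, since $G=G_1=G_1/G_0$. For the inductive step, note that $G_{n-1}$ is itself a poly-$\mathcal{TF}$ group of length at most $n-1$ with filtration $1=G_0\lhd\cdots\lhd G_{n-1}$, so by induction
\[
\gd(G_{n-1})\le \sum_{i=1}^{n-1}\gd(G_i/G_{i-1}).
\]
It therefore suffices to show that for a short exact sequence $1\to N\to G\to Q\to 1$ with $N=G_{n-1}$ and $Q=G_n/G_{n-1}$ we have $\gd(G)\le \gd(N)+\gd(Q)$. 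By the preceding remark $G$ is torsion-free, so this is a statement about ordinary classifying spaces $EG$.

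To prove the extension inequality we construct a model. Let $X=EQ$ be a free $Q$-CW-complex of dimension $\gd(Q)$, regarded as a $G$-CW-complex via $p\colon G\to Q$. Its isotropy groups are all equal to $N$, so the relevant statement is the standard one, e.g.\ L\"uck's result on classifying spaces for families and extensions, or a direct construction. If a $G$-CW-complex $X$ has every isotropy group $H$ admitting a model $EH$ of dimension at most $d$, then there is a free contractible $G$-CW-complex of dimension at most $\dim X+d$.

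Concretely, we replace each orbit of cells $G/N\times D^k$ of $X$ by $G\times_N EN\times D^k$ and glue inductively over skeleta. The gluing maps exist and are unique up to homotopy by the universal property of $EN$, since $N$-maps into $EN$ from free $N$-complexes exist up to homotopy. This produces a free $G$-CW-complex $Y$ with a $G$-map $Y\to X$ whose fibres are copies of $EN$. Because $X$ is contractible and the fibres are contractible, $Y$ is contractible, so $Y$ is a model for $EG$ of dimension $\gd(Q)+\gd(N)$. The cleanest way to phrase this is as a homotopy colimit over the cells of $X$, or alternatively via the Borel-type construction $EN$-bundle over $EQ$, namely $Y=(\widetilde{EG'}\times\dots)$. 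We will instead cite L\"uck's theorem \cite{Lu05} on $\gd_{\calF}$ for extensions (Theorem 5.16 there) specialized to the trivial family; that specialization is the main point of the argument.

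Combining the two steps gives
\[
\gd(G)\le \gd(G_{n-1})+\gd(G_n/G_{n-1})\le \sum_{i=1}^n\gd(G_i/G_{i-1}).
\]
If any $\gd(G_i/G_{i-1})$ is infinite, the inequality holds trivially. The main obstacle is justifying the extension inequality $\gd(G)\le\gd(N)+\gd(Q)$ cleanly, in particular the cellular gluing or the appeal to L\"uck's extension theorem. Torsion-freeness of $G$ is needed here only to identify $\gd$ with the dimension of $EG$ and to make sense of the inductive hypothesis on $G_{n-1}$, which is torsion-free for the same reason.
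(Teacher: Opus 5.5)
Your proposal is correct and follows the paper's proof: induction on $n$ via the top extension $1\to G_{n-1}\to G\to G_n/G_{n-1}\to 1$, with the inequality $\gd(G)\le\gd(G_{n-1})+\gd(G_n/G_{n-1})$ taken from L\"uck's survey, which applies because all groups involved are torsion-free. One caveat on the citation: Theorem~5.16 of \cite{Lu05} (cf.\ \cref{exten:gd:inequality}) is a statement about $\gdfin$ that assumes a condition on every finite extension of the kernel, so it cannot simply be ``specialized to the trivial family''; it applies here without that hypothesis only because $G_n/G_{n-1}$ is torsion-free, so that $G_{n-1}$ is the only isotropy group of $E(G_n/G_{n-1})$ viewed as a $G$-complex, and this is exactly the observation behind your direct cell-replacement argument.
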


A simple fact of poly-$\mathcal{TF}$ groups that we use thoroughly in this section (since poly-free groups and poly-surface groups are poly-$\mathcal{TF}$), is that a poly-$\mathcal{TF}$ group is torsion-free. Indeed, if $G$ admits a filtration $1 = G_0 \lhd G_1 \lhd \cdots \lhd G_{n-1} \lhd G_n = G$, and $g \in G$ has finite order, then $g \in G_{n-1}$ since $G/G_{n-1}$ is torsion-free. After repeating this argument $n$ times we have that $g \in G_0 = \{e\}$.

\begin{proof}
We proceed by induction on the length \( n \) of the filtration.

If \( n = 1 \), then \( G = G_1 \) and the claim is clear.

Assume the statement holds for all poly-$\mathcal{TF}$ groups of length less than \( n \). Let \( G \) be a poly-$\mathcal{TF}$ group of length \( n \) with a filtration
\[
1 = G_0 \lhd G_1 \lhd \cdots \lhd G_{n-1} \lhd G_n = G.
\]
Consider the short exact sequence
\[
1 \to G_{n-1} \to G \to G_n / G_{n-1} \to 1.
\]
Since $G$ and \( G_n / G_{n-1} \) are torsion-free, then $\gd(G) = \gdfin(G)$ and $\gd(G_n/G_{n-1}) = \gdfin(G_n/G_{n-1})$, and it follows from \cite[Theorem 5.15]{Lu05} that
\[
\gd(G) \leq \gd(G_{n-1}) + \gd(G_n / G_{n-1}).
\]
By the induction hypothesis,
\[
\gd(G_{n-1}) \leq \sum_{i=1}^{n-1} \gd(G_i / G_{i-1}).
\]
Therefore,
\[
\gd(G) \leq \sum_{i=1}^{n} \gd(G_i / G_{i-1}),
\]
as claimed.
\end{proof}

Using that any poly-free group and any poly-surface group is in particular poly-$\mathcal{TF}$, we obtain the following consequences.

\begin{corollary}\label{gd:poli-free}
Let \( G \) be a poly-free group of length \( n \), with a filtration
\[
1 = G_0 \lhd G_1 \lhd \cdots \lhd G_{n-1} \lhd G_n = G.
\]
Then
\[
\gd(G) \leq n.
\] 
\end{corollary}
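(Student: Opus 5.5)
This corollary follows directly from the preceding lemma, which gives, for a poly-$\mathcal{TF}$ group with filtration $1 = G_0 \lhd G_1 \lhd \cdots \lhd G_n = G$,
\[
\gd(G) \leq \sum_{i=1}^n \gd(G_i/G_{i-1}).
\]
It therefore suffices to check two things: that a poly-free group is poly-$\mathcal{TF}$, and that each summand satisfies $\gd(G_i/G_{i-1}) \le 1$. Summing the $n$ terms then gives $\gd(G)\le n$.

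For the first point, each quotient $G_i/G_{i-1}$ is free by hypothesis. Free groups are torsion-free, since a nontrivial element of a free group acts freely on the Cayley tree of a free basis and so has infinite order. Hence the given filtration exhibits $G$ as a poly-$\mathcal{TF}$ group, and the lemma applies to exactly this filtration.

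For the second point, let $F$ be a free group on a basis $S$, of arbitrary cardinality. The wedge of circles $\bigvee_{S} S^1$ is a $K(F,1)$, and its universal cover, the Cayley tree of $F$ with respect to $S$, is a $1$-dimensional free $F$-CW-complex. It is contractible, being a tree. It is therefore a model for $E F = E_{\{1\}}F$, so $\gd(F)\le 1$. When $F$ is trivial, $\gd(F)=0$, and the inequality still holds.

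No step here is a real obstacle, since all the substance lies in the preceding lemma, namely the additivity of $\gd$ along extensions of torsion-free groups via \cite[Theorem 5.15]{Lu05}. The only care needed is to allow free quotients of infinite rank and trivial quotients, and neither affects the bound $\gd(G_i/G_{i-1})\le 1$. Finally, the bound holds for any filtration of length $n$ with free quotients. Applying it to a filtration of minimal length gives $\gd(G)$ at most the length of $G$.
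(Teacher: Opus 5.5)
Your proposal is correct and follows the paper's route: the paper obtains this corollary directly from the preceding lemma $\gd(G)\le\sum_i \gd(G_i/G_{i-1})$ for poly-$\mathcal{TF}$ groups, using that poly-free groups are poly-$\mathcal{TF}$. Your explicit check that $\gd(F)\le 1$ for any free group $F$ (via its Cayley tree, including infinite rank and the trivial case) is the implicit step the paper leaves unstated.
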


\begin{corollary}\label{gd:poly-surface}
Let \( G \) be a poly-surface group of length \( n \), with a filtration
\[
1 = G_0 \lhd G_1 \lhd \cdots \lhd G_{n-1} \lhd G_n = G.
\]
Then
\[
\gd(G) \leq 2n.
\] 
\end{corollary}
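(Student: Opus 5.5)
The statement is Corollary~\ref{gd:poly-surface}. I would obtain it directly from the preceding lemma on poly-$\mathcal{TF}$ groups, bounding each term of the sum by $2$.

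First, $G$ is poly-$\mathcal{TF}$. Each quotient $G_i/G_{i-1}\cong \pi_1(S_i)$, where $S_i$ is a connected surface with non-positive Euler characteristic and compact (possibly empty) boundary. Such a surface group is torsion-free: $S_i$ is aspherical, because $S^2$ and $\mathbb{RP}^2$ are excluded by $\chi\le 0$. So the lemma applies and gives
\[
\gd(G)\le \sum_{i=1}^n \gd(\pi_1(S_i)).
\]

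It remains to show $\gd(\pi_1(S))\le 2$ for every such surface $S$, by cases.

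\emph{$S$ closed.} Then $\chi(S)\le 0$ makes $S$ an aspherical $2$-dimensional CW-complex. Hence $S$ is a $2$-dimensional model for $B\pi_1(S)$, and its universal cover is a $2$-dimensional model for $E\pi_1(S)$.

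\emph{$S$ with nonempty compact boundary, or of infinite type (non-compact).} Then $S$ deformation retracts onto a graph, so $\pi_1(S)$ is free and $\gd(\pi_1(S))\le 1$. For infinite type surfaces I would justify this using the standard fact that open (and, after removing a collar, bordered non-compact) surfaces have free fundamental group. Alternatively, $\pi_1(S)$ is countable and locally free, hence has cohomological dimension at most $2$. For countable groups, $\gd\le\max(3,\mathrm{cd})$ plus the Eilenberg--Ganea discussion is more delicate, so I prefer the free-group argument.

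Summing the bounds gives $\gd(G)\le 2n$.

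The main obstacle is the infinite type case. Everything else is immediate from the lemma. There I would cite the classical result that the fundamental group of any non-compact connected surface is free, so it has a $1$-dimensional $K(\pi,1)$ and contributes at most $1\le 2$.
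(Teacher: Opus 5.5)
Your proposal is correct and matches the paper's argument: the paper deduces the corollary directly from the poly-$\mathcal{TF}$ lemma, using that each quotient $G_i/G_{i-1}$ is a surface group with $\gd\le 2$. Your case analysis spells out this bound, which the paper leaves implicit: a closed surface with $\chi\le 0$ is aspherical and gives a $2$-dimensional model, while a surface with nonempty boundary or a non-compact surface has free fundamental group.
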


Now, since poly-$\mathcal{TF}$ groups are torsion-free, we have that $\gd = \gdfin$. So, henceforth we focus only on finding an upper bound to $\gdvc$ of these groups.

\begin{lemma}\label{exten:gd:inequality}\cite[Theorem 5.16]{Lu05}
Let 
\[
1 \to K \to G \to H \to 1
\]
be a short exact sequence of groups. Suppose that for any group \( M \) which contains a finite-index subgroup isomorphic to \( K \), we have \( \gdfin(M) = \gdfin(K) \). Then the following inequality holds
\[
\gdfin(G) \leq \gdfin(K) + \gdfin(H).
\]
\end{lemma}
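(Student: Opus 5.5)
We reduce to Lück's classical construction (Theorem 5.16 in \cite{Lu05}): build a model for $E_{\mathcal{FIN}}G$ from models for $K$ and $H$. Choose a model $X$ for $E_{\mathcal{FIN}}H$ of dimension $\gdfin(H)$ and let $G$ act on it through the projection $p\colon G\to H$. The isotropy groups of this $G$-action are the preimages $M=p^{-1}(F)$ of finite subgroups $F\le H$. Each such $M$ contains $K$ as a finite-index subgroup, so by hypothesis it admits a model for $E_{\mathcal{FIN}}M$ of dimension $\gdfin(K)$.

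The first step is to show that $X$, regarded as a $G$-CW-complex, is a model for $E_{\mathcal{H}}G$. Here $\mathcal{H}$ is the family of subgroups $L\le G$ with $p(L)$ finite. This follows from the universal property: for $L\in\mathcal{H}$ we have $X^L=X^{p(L)}$, which is contractible.

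Next we apply the transitivity principle for families (Lück, \cite[Theorem 5.1, Prop.~5.1]{Lu05}). We have $\mathcal{FIN}\subseteq\mathcal{H}$. Suppose that for every $L\in\mathcal{H}$ one has $\gd_{\mathcal{FIN}\cap L}(L)\le d$. Then
\[
\gdfin(G)\le \gd_{\mathcal{H}}(G)+d.
\]
The concrete construction replaces each equivariant cell $G/L\times D^m$ of $X$ by $G\times_L E_{\mathcal{FIN}}L\times D^m$. The pieces are glued cell by cell via equivariant cellular maps, which exist up to homotopy by universality. The result is a $G$-CW complex $Z$ with a $G$-map $Z\to X$ whose fibres over points with isotropy $L$ are models for $E_{\mathcal{FIN}}L$. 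Its dimension is at most $\dim X+d$, and its isotropy groups are finite. The fixed sets $Z^F$ fibre over the contractible space $X^{p(F)}$ with contractible fibres, so $Z$ is a model for $E_{\mathcal{FIN}}G$.

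It remains to bound $d$. Every $L\in\mathcal{H}$ is contained in $M=p^{-1}(p(L))$, which has $K$ as a finite-index subgroup. Hence $\gdfin(L)\le\gdfin(M)=\gdfin(K)$, because restricting a model for $E_{\mathcal{FIN}}M$ to a subgroup gives a model for that subgroup. Combining this with $\gd_{\mathcal{H}}(G)\le\gdfin(H)$ yields the claimed inequality.

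The main obstacle is the gluing step: building $Z$ with the right dimension so that the fibre models depend coherently on the cells. Lück handles this by induction over the skeleta of $X$, using equivariant mapping cylinders or homotopy colimits. If needed, I would simply cite \cite[Theorem 5.16]{Lu05} for this step.
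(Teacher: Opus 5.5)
Your argument is correct and follows the route behind the cited result. The paper gives no proof of its own beyond \cite[Theorem 5.16]{Lu05}, and L\"uck proves that theorem as you do: regard a model for $E_{\mathcal{FIN}}H$ as a $G$-CW-complex via $p$ (its isotropy groups $p^{-1}(F)$ contain $K$ with finite index), then replace each cell $G/p^{-1}(F)\times D^m$ by $G\times_{p^{-1}(F)}E_{\mathcal{FIN}}(p^{-1}(F))\times D^m$.

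The only fix is the citation for the gluing step. That step is the general transitivity principle: if $\mathcal{F}\subseteq\mathcal{G}$ and $\gd_{\mathcal{F}\cap L}(L)\le d$ for all $L\in\mathcal{G}$, then $\gd_{\mathcal{F}}(G)\le\gd_{\mathcal{G}}(G)+d$ (L\"uck--Weiermann, \cite{LW12}). Citing Theorem 5.16 itself for that step would be circular.
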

Motivated by Lemma~\ref{exten:gd:inequality}, we introduce the following notion.

\begin{definition}
A group \(G\) is said to be \emph{geometrically rigid} if for every finite extension
\(K\) of \(G\) one has
\[
\gdfin(G)=\gdfin(K).
\]
\end{definition}

\medskip

\begin{example}\label{lem:rigid-classes}
    Several natural classes of groups are geometrically rigid.
    \begin{enumerate}
        \item Free groups (this goes back to work of Dunwoody in the finitely generated case; see \cite[Corollary~2.1]{MR4817678} for the general case).
        \item Polycyclic groups (see \cite{LW12}).
        \item Fundamental groups of closed surfaces (see \cite[Lemma 4.4]{Gui2010}
    \end{enumerate}
\end{example}


\begin{lemma}\cite[Lemma 4.4]{MR2545612}\label{luck:condition C}
 Let $n$ be an integer. Suppose that $G$ is a balanced group. Suppose that $\gdfin(G)\leq n$ and for every infinite cyclic subgroup $H$ of $G$ we have $\gdfin(W_G(H))\leq n$. Then $\gdvc(G)\leq n+1$.  
\end{lemma}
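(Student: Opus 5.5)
This is the cited result \cite[Lemma 4.4]{MR2545612}. The plan is to reproduce its mechanism: Lück--Weiermann's pushout construction for passing from the family $\mathcal{FIN}$ to $\vcyc$, with the balanced hypothesis used to turn commensurators into unions of normalizers.

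\textbf{Step 1 (equivalence relation).} On the set of infinite virtually cyclic subgroups of $G$, declare $V\sim W$ when $V\cap W$ is infinite, i.e.\ when they are commensurable. Each class $[V]$ has stabilizer $N_G[V]$ under conjugation, and the subgroups in the class together with the finite ones form a family $\mathcal{G}[V]$ of subgroups of $N_G[V]$. Lück--Weiermann give a $G$-pushout
\[
\coprod_{[V]} G\times_{N_G[V]} E_{\mathcal{FIN}}N_G[V] \to E_{\mathcal{FIN}}G,
\qquad
\coprod_{[V]} G\times_{N_G[V]} E_{\mathcal{G}[V]}N_G[V] \to E_{\vcyc}G .
\]
The $[V]$ run over conjugacy classes of equivalence classes, and the left vertical map is cellular. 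Such a pushout yields a model of dimension
\[
\max\{\gdfin(G),\ \gdfin(N_G[V])+1,\ \gd_{\mathcal{G}[V]}(N_G[V])\}.
\]

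\textbf{Step 2 (use balanced).} Each class contains an infinite cyclic $C=\langle h\rangle$. By \cref{lemma:commensurator_cyclic}, $N_G[C]=\bigcup_k N_G(k!C)$ is an increasing union. Every subgroup in $\mathcal{G}[C]$ is finite or commensurable with $C$. Hence the quotient $W_k=N_G(k!C)/k!C$ kills the relevant virtually cyclic subgroups, and $\gd_{\mathcal{G}[C]}$ restricted to $N_G(k!C)$ is at most $\gdfin(W_G(k!C))\le n$. Here $k!C$ is itself infinite cyclic, so the hypothesis applies to it.

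\textbf{Step 3 (directed union).} Pass to the union over $k$ using the standard colimit/telescope bound for increasing unions of groups. That bound costs one extra dimension, giving $\gd_{\mathcal{G}[C]}(N_G[C])\le n+1$. Also $\gdfin(N_G[C])\le\gdfin(G)\le n$ because $N_G[C]\le G$. Substituting into the pushout bound gives $\gdvc(G)\le n+1$.

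The main obstacle is the bookkeeping in Step 3. One must check that the pushout bound is not made worse by the union, that is, that the $+1$ from the telescope and the $+1$ from the mapping-cylinder pushout do not stack. If they did stack, the bound would become $n+2$. I would resolve this as Lück does: build $E_{\mathcal{G}[C]}N_G[C]$ directly as a mapping telescope of the models $N_G[C]\times_{N_G(k!C)}E W_k$. Each such model has dimension at most $n$, so the telescope has dimension at most $n+1$. The pushout then needs only dimension $\max\{n,\ n+1,\ n+1\}$.
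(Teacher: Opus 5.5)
Your proposal is correct, and it follows the route of the proof behind the cited \cite[Lemma 4.4]{MR2545612}; the paper itself only quotes that result. The shared steps are: the Lück--Weiermann pushout over commensurability classes; the balanced hypothesis giving $N_G[C]=\bigcup_k N_G(k!C)$ via \cref{lemma:commensurator_cyclic}; pulling back $E_{\mathcal{FIN}}W_G(k!C)$ to obtain $\gd_{\mathcal{G}[C]\cap N_G(k!C)}(N_G(k!C))\le n$; and a countable telescope giving $\gd_{\mathcal{G}[C]}(N_G[C])\le n+1$, which together with $\gdfin(N_G[C])+1\le n+1$ yields $\gdvc(G)\le n+1$. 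The one slip is notational: in the telescope, $EW_k$ must denote the proper classifying space $E_{\mathcal{FIN}}W_k$, not the free one, and your dimension bound $\le n$ already assumes this.
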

\begin{theorem}\label{thm:poly-free}
Let $G$ be a normally poly-free group of length $n$, then $\gdvc(G)\leq n+1$ 
\end{theorem}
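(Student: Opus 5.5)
We will apply \cref{luck:condition C} with the integer $n$ equal to the length of $G$. By \cref{rem:poly-free and poly-surface are balanced}, $G$ is balanced. Since $G$ is poly-free, it is poly-$\mathcal{TF}$ and hence torsion-free, so $\gdfin(G)=\gd(G)\le n$ by \cref{gd:poli-free}. It therefore remains to show that for every infinite cyclic subgroup $H=\langle h\rangle\le G$ the Weyl group $W_G(H)=N_G(H)/H$ satisfies $\gdfin(W_G(H))\le n$. The conclusion $\gdvc(G)\le n+1$ then follows.

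To bound $\gdfin(W_G(H))$, first note that $N:=N_G(H)$ is a subgroup of $G$. By the free analogue of \cref{normally:surface:groups:closed:under:subgroups} (the same proof works, since subgroups of free groups are free), $N$ is normally poly-free of length at most $n$, with filtration $N_i=G_i\cap N$. Let $j$ be the least index with $h\in N_j$. Then $H\cap N_{j-1}=1$, and $H N_{j-1}/N_{j-1}$ is an infinite cyclic subgroup of the free group $N_j/N_{j-1}$. Moreover $H$ is normal in $N$, so its image is normal in $N_j/N_{j-1}$ (as $N_j\le N$ normalizes $H$). A free group with a nontrivial normal cyclic subgroup is itself infinite cyclic, so $N_j/N_{j-1}\cong\mathbb{Z}$, and $HN_{j-1}$ has finite index in $N_j$.

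Now form $N/H$ with the filtration by $N_iH/H$. For $i<j$ we have $N_iH/H\cong N_i$, so the factors are the free groups $N_i/N_{i-1}$. At step $j$ the factor is $N_j/(N_{j-1}H)$, which is finite, hence trivial up to a finite extension. For $i>j$ the factors $N_iH/N_{i-1}H$ are quotients of $N_i/N_{i-1}$ by the image of $N_{i-1}H$, and this is the main obstacle: these quotients need not be free.

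To get around this, I would avoid quotienting higher layers and instead use the extension
\[
1\to N_j/H\cap\cdots\to N/H\to N/N_j\to1 ,
\]
where $N/N_j$ is poly-free of length $\le n-j$. By \cref{normally:quotient} applied within $N$, $\gdfin(N/N_j)\le n-j$. The kernel $N_jH/H=N_j/H$ (note $H\le N_j$) is a finite extension of $N_{j-1}\cong N_{j-1}H/H$, so it is a virtually poly-free group of length $\le j-1$. I would then bound $\gdfin(N_j/H)\le j-1$, either via \cref{exten:gd:inequality} applied along the filtration of $N_{j-1}$, using that free groups are geometrically rigid (\cref{lem:rigid-classes}), or more cleanly by noting that $N_j/H$ acts on the product model for $N_{j-1}$ (a finite extension of a torsion-free group of $\gd\le j-1$, via the inductive tree construction). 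Applying \cref{exten:gd:inequality} once more requires rigidity of the kernel, which is the delicate point. If that fails, I would fall back on the Lück--Weiermann type bound $\gdfin(Q)\le \gdfin(K)+\gdfin(Q/K)$ for extensions with torsion-free quotient (\cite[Theorem 5.15]{Lu05} in the relevant form), since $N/N_j$ is torsion-free. Either route gives
\[
\gdfin(W_G(H))\le (j-1)+(n-j)\le n,
\]
which completes the argument.
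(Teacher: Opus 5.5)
Your argument is correct, and it is organized differently from the paper's.

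\textbf{The paper's route.} The paper proves $\gdfin(W_G(H))\le n-1$ by induction on the length, peeling off the bottom layer $G_1$. It restricts $1\to G_1\to G\to G/G_1\to 1$ to $N_G(H)$ and splits into two cases.
\begin{itemize}
\item If $H\le G_1$, the kernel $N_{G_1}(H)/H$ is finite and the quotient is a subgroup of $G/G_1$.
\item If $H\not\le G_1$, the kernel $G_1\cap N_G(H)$ is free and the quotient sits inside $W_{G/G_1}(p(H))$, which the inductive hypothesis handles.
\end{itemize}
Only \cref{exten:gd:inequality} with finite or free kernels is used. The same template is reused almost verbatim for poly-surface groups.

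\textbf{Your route.} You avoid induction on Weyl groups. You restrict the whole filtration to $N=N_G(H)$ and locate the first layer $j$ containing $h$. A free group with a nontrivial normal cyclic subgroup is $\mathbb{Z}$, so $N_j/N_{j-1}\cong\mathbb{Z}$ and $HN_{j-1}$ has finite index in $N_j$. This gives an explicit normal series for $W_G(H)=N/H$ whose factors are all free except one finite factor. That is more structural information than the paper extracts, and your count $(j-1)+(n-j)=n-1$ recovers the paper's bound. You should justify $H\cap N_{j-1}=1$: otherwise the image of $h$ in the torsion-free group $N_j/N_{j-1}$ would be a nontrivial torsion element.

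\textbf{Two points to clean up.}
\begin{itemize}
\item The ``main obstacle'' you describe does not exist. For $i>j$ you have $H\le N_j\le N_{i-1}$, so $N_iH/N_{i-1}H=N_i/N_{i-1}$, which is free. So you can just iterate \cref{exten:gd:inequality} from the bottom, with the free, geometrically rigid kernels $N_iH/N_{i-1}H\cong N_i/N_{i-1}$ for $i\le j-1$; this is your route (a). Then finish with the finite kernel $N_j/N_{j-1}H$ inside $N/N_{j-1}H$, whose quotient $N/N_j$ has $\gd(N/N_j)\le n-j$.
\item The first option you offer for the last step is unjustified. It applies \cref{exten:gd:inequality} with kernel $N_j/H$, but you do not know $N_j/H$ is geometrically rigid. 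Your fallback is valid: since $N/N_j$ is torsion-free, the extension bound with torsion-free quotient (the form of \cite[Theorem 5.15]{Lu05} the paper uses in its poly-$\mathcal{TF}$ lemma) needs no rigidity of the kernel. It gives $\gdfin(N/H)\le (j-1)+(n-j)$.
\end{itemize}
Commit to one of the valid routes rather than listing the unjustified one alongside it.
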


\begin{proof}
By \cref{hyperbolic:groups:propertyC} and \cref{rem:poly-free and poly-surface are balanced} $G$ is balanced and by \cref{gd:poli-free} $\gd(G)\leq n$, then by \cref{luck:condition C}  it is enough to prove that for every infinite cyclic subgroup $H$ of $G$ we have $\gdfin(W_G(H))\leq n-1$. We prove this by induction on $n$.
If $n=1$, then $G$ is a free group, and it is well-known that every infinite virtually cyclic subgroup has the normalizer $N_G(H)$ virtually cyclic it follows that $W_G(H)$ is finite and in particular $\gdfin(W_G(H))=0$. 

We now suppose the claim is true for every $k<n$, i.e., we suppose that for every normally poly-free group $G$  of length $k<n$ and for every infinite virtually cyclic subgroup $H<G$ we have $\gdfin(W_G(H))\leq k-1$. We prove the claim for $n=k$.
 Let $G$ be a normally poly-free group  of length $k$. Then there is a filtration  of $G$ by subgroups   $1=G_0\lhd G_1\lhd \cdots \lhd G_{k-1}\lhd G_k=G$  such that $G_i\lhd G$, and the quotient $G_{i+1}/G_i$ is a  free group. We consider the following short exact sequence 
$$1\to G_{1}\to G\xrightarrow[]{p} G/G_{1}\to 1.$$ 

and let  $H$ be an infinite cyclic subgroup of $G$, we can restrict the short exact sequence to 

$$1\to G_{1}\cap N_G(H) \to N_G(H)\xrightarrow[]{p} Q\to 1.$$ 

We have two cases $p(H)=1$ or $p(H)\neq1$. 

In the first case we have the following short exact sequence 

$$1\to  N_{G_1}(H)/H \to W_G(H)\xrightarrow[]{p} Q\to 1.$$

Note that $ N_{G_1}(H)/H$ is a finite subgroup, then by \cref{exten:gd:inequality} we have \[\underline{\gd}(W_G(H))\leq 0 + \underline{\gd}(Q) \leq n-1\]
the last inequality is because $Q$ is a normally poly-free group of length at most $\leq n-1$ (\cref{normally:quotient}).

Now suppose that $p(H)\neq1$. In this case we have the following short exact sequence 
$$1\to G_{1}\cap N_G(H) \to W_G(H)\xrightarrow[]{p} Q/p(H)\to 1.$$

Note that $G_{1}\cap N_G(H)$ is a free group, and by \cref{lem:rigid-classes}(1) any non-trivial virtually free group has $\gdfin=1$, then it follows by \cref{exten:gd:inequality} that 

\[\underline{\gd}(W_G(H))\leq 1 + \underline{\gd}(Q/p(H))\leq 1+ \underline{\gd}(N_{G/G_1}(p(H))/p(H))\leq n-1.\]

\end{proof}


\begin{theorem}\label{thm:poly-surface}
Let $G$ be a normally poly-surface group of length $n$, then $\gdvc(G)\leq 2n$ 
\end{theorem}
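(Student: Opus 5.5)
We follow the proof of \cref{thm:poly-free}. By \cref{hyperbolic:groups:propertyC} and \cref{rem:poly-free and poly-surface are balanced}, $G$ is balanced. By \cref{gd:poly-surface}, $\gdfin(G)=\gd(G)\leq 2n$, since $G$ is torsion-free. By \cref{luck:condition C}, with the integer there taken to be $2n-1$, it therefore suffices to prove two bounds:
\[
\gdfin(G)\leq 2n-1 \qquad\text{and}\qquad \gdfin(W_G(H))\leq 2n-1
\]
for every infinite cyclic $H\le G$. To get the first bound we need to sharpen $\gd(G)\le 2n$. A surface group with non-empty boundary, or one of infinite type, is free and so has $\gd\le 1$. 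A closed surface group has $\gd=2$. Hence the only obstruction is a filtration all of whose factors are closed surface groups. I would therefore aim for the weaker form
\[
\gdfin(G)\le 2n \qquad\text{and}\qquad \gdfin(W_G(H))\le 2n-1,
\]
and then invoke the version of Lück's lemma actually proved in \cite[Lemma 4.4]{MR2545612}. In that version the bound is $\gdvc(G)\le \max\{\gdfin(G),\,\gdfin(W_G(H))+1\}$, which gives $\gdvc(G)\le 2n$. If only the stated form of \cref{luck:condition C} is available, I would instead carry out the induction below with the target $\gdfin(W_G(H))\le 2n-2$ and check whether it closes.

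The main claim is $\gdfin(W_G(H))\le 2n-2$ for every infinite cyclic $H$, proved by induction on $n$. For $n=1$, $G$ is a surface group. In a hyperbolic surface group the centralizer of any nontrivial element is cyclic, so $W_G(H)$ is finite. In the Euclidean cases ($\Z^2$ and the Klein bottle group) $W_G(H)$ is virtually $\Z$, so $\gdfin\le 1$. Note the Euclidean case breaks the bound $2n-2=0$; this is exactly why I would only aim for $2n-1$ in general, which again requires the $\max$ form of Lück's lemma.

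For the inductive step, take the sequence $1\to G_1\to G\xrightarrow{p} G/G_1\to 1$ and restrict it to $N_G(H)$, so that $Q=p(N_G(H))\le N_{G/G_1}(p(H))$. There are two cases.

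\emph{Case $p(H)=1$.} Then $H\le G_1$ and we have
\[
1\to N_{G_1}(H)/H\to W_G(H)\to Q\to 1 .
\]
The kernel is virtually cyclic (at most $\Z$). Here we use geometric rigidity of $\Z$ and of finite groups (\cref{lem:rigid-classes}) together with \cref{exten:gd:inequality}. The quotient $Q$ is a subgroup of the normally poly-surface group $G/G_1$ of length at most $n-1$ (\cref{normally:quotient} and \cref{normally:surface:groups:closed:under:subgroups}), so $\gdfin(Q)\le 2n-2$. Altogether $\gdfin(W_G(H))\le 1+2n-2=2n-1$.

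\emph{Case $p(H)\neq 1$.} We have
\[
1\to G_1\cap N_G(H)\to W_G(H)\to Q/p(H)\to 1 .
\]
The kernel is a subgroup of a surface group, so it is either free or of finite index in a closed surface group; in both cases it is geometrically rigid by \cref{lem:rigid-classes}, with $\gdfin\le 2$. The quotient $Q/p(H)$ is a subgroup of $N_{G/G_1}(p(H))/p(H)$. By \cref{normally:quotient} and the induction hypothesis, the latter has $\gdfin\le 2(n-1)-1$. \cref{exten:gd:inequality} then gives $\gdfin(W_G(H))\le 2+2n-3=2n-1$.

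The subtle point in this case is that $Q/p(H)$ is only a subgroup of a Weyl group, and subgroups of Weyl groups need not be Weyl groups themselves. I would handle this by proving the induction statement for all subgroups $K\le N_G(H)$ containing $H$, that is, bounding $\gdfin(K/H)$. The same two-case argument goes through verbatim for such $K$.

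The main obstacle is matching the final bound $2n$. The $n=1$ Euclidean case and the $+2$ contributed by closed-surface kernels only give $\gdfin(W_G(H))\le 2n-1$. So the step to check carefully is that \cite[Lemma 4.4]{MR2545612} indeed yields $\gdvc\le\max\{\gdfin(G),\gdfin(W_G(H))+1\}$ rather than just $n+1$. If that fails, the fallback is to show directly that in the case $p(H)\neq1$ the kernel $G_1\cap N_G(H)$ is centralizing-type, hence cyclic or trivial, so it contributes at most $1$ instead of $2$.
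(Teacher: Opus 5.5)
\textbf{The gap is in your final step, and it cannot be closed, because the statement as given is false.}

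Your inductive bound $\gdfin(W_G(H))\le 2n-1$ is the paper's two-case argument and is fine. Your worry about $Q/p(H)$ is unnecessary, since $\gdfin$ is monotone under passing to subgroups (restrict a model).

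The problem is how you pass to $\gdvc$. The ``max form'' $\gdvc(G)\le\max\{\gdfin(G),\gdfin(W_G(H))+1\}$ is not what \cite[Lemma 4.4]{MR2545612} gives, and it is false. The L\"uck--Weiermann pushout behind that lemma, aimed at dimension $d$, also needs $\gdfin(N_G[H])\le d-1$. But $N_G[H]$ can be all of $G$, for instance when $H$ is central. Concretely, $G=\Z^2=\pi_1(T^2)$ is normally poly-surface of length $1$, since the torus has Euler characteristic $0$. Here $\gdfin(G)=2$ and $W_G(H)$ is virtually $\Z$, so your formula would give $\gdvc(\Z^2)\le 2$. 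In fact $\gdvc(\Z^2)=3$; see the remark after \cref{poli:free:virtually:cy}.

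Your fallbacks do not help either. The stated \cref{luck:condition C} needs $\gdfin(G)\le d$, and lowering the Weyl-group bound does nothing about $\gdfin(G)=2n$. The target $\gdfin(W_G(H))\le 2n-2$ also fails: for $\Z^{2n}$ the Weyl groups are virtually $\Z^{2n-1}$. Finally, the kernel $G_1\cap N_G(H)$ need not be cyclic. For $G=\pi_1(S)\times\Z$ with $G_1=\pi_1(S)$ and $H$ the $\Z$ factor, the kernel is $\pi_1(S)$.

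For every $n$ there is a counterexample: $\Z^{2n}$ is normally poly-surface of length $n$, using the filtration $\Z^2\lhd\Z^4\lhd\cdots$, and $\gdvc(\Z^{2n})=2n+1$ \cite{LW12}.

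The paper's own proof makes the same leap you tried to justify. It derives $2n$ from $\gd(G)\le 2n$ and $\gdfin(W_G(H))\le 2n-1$ via \cref{luck:condition C}. Applied honestly with $d=2n$, that lemma only yields $2n+1$. What your argument and the paper's actually establish is:
\begin{itemize}
\item $\gdvc(G)\le 2n+1$ in general;
\item $\gdvc(G)\le 2n$ when at least one factor of the filtration is free, since then $\gd(G)\le 2n-1$ and the lemma applies with $d=2n-1$.
\end{itemize}
You correctly located the obstruction, namely filtrations all of whose factors are closed surface groups. The right fix is to weaken the conclusion, not to strengthen L\"uck's lemma.
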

\begin{proof}
By \cref{hyperbolic:groups:propertyC} and \cref{rem:poly-free and poly-surface are balanced} $G$ is a balanced group and by \cref{gd:poly-surface} $\gd(G)\leq 2n$. Then, using \cref{luck:condition C}  it is enough to prove that for every infinite cyclic subgroup $H$ of $G$ we have $\gdfin(W_G(H))\leq 2n-1$. We prove this by induction on $n$.

If $n=1$, then $G$ is a surface group, since $G$ is a torsion-free hyperbolic group, a free group,  $\Z^2$ or $\Z\rtimes \Z$, it follows that every infinite virtually cyclic subgroup has the normalizer $N_G(H)$ infinite cyclic, $\Z^2$ or $\Z\rtimes \Z$. Then $W_G(H)$ is finite or $\Z$, in particular $\gdfin(W_G(H))$ is $0$ or $1$, and in any case $\gdfin(W_G(H))\leq 1$. 

We now suppose the claim is true for every $k<n$, i.e., we suppose that for every normally poly-surface group $G$  of length $k<n$ and for every infinite cyclic subgroup $H<G$ we have $\gdfin(W_G(H))\leq 2k-1$. We prove the claim for $n=k$.
 Let $G$ be a normally poly-surface group of length $k$. Then there is a filtration  of $G$ by subgroups   $1=G_0\lhd G_1\lhd \cdots \lhd G_{k-1}\lhd G_k=G$  such that for all $i$, $G_i\lhd G$ and the quotient $G_{i+1}/G_i$ is a  surface group. We consider the following short exact sequence 
$$1\to G_{1}\to G\xrightarrow[]{p} G/G_{1}\to 1,$$ 
and let  $H$ be an infinite cyclic subgroup of $G$ generated by $h$. We can restrict the short exact sequence to 

$$1\to G_{1}\cap N_G(H) \to N_G(H)\xrightarrow[]{p} Q\to 1.$$ 

We divide the proof into two cases: $p(H)=1$ or $p(H)\neq1$. Moreover, since $G/G_1$ is a normally poly-surface group, it is torsion free, which implies that $Q$ is torsion-free. Hence, $p(H) = 1$ if and only if there exists $n \in \Z \setminus \{0\}$ such that $p(h^n) = 1$.

\textit{Case 1,} $p(H) = 1$: In this case, $H$ is a subgroup of $G_1 \cap N_G(H)$ and we can obtain the following short exact sequence 
$$1\to  N_{G_1}(H)/H \to W_G(H)\xrightarrow[]{p} Q\to 1.$$

Note that $G_1$ is a surface group, which implies that $ N_{G_1}(H)/H$ is a finite  or infinite cyclic subgroup, and in any case they are geometrically rigid and have $\gdfin$ at most 1. Then, by \cref{exten:gd:inequality} we have \[\underline{\gd}(W_G(H))\leq \underline{\gd}(N_{G_1}(H)/H) + \underline{\gd}(Q) \leq 1+2(k-1)=2k-1\]
the last inequality is because $Q$ is a normally poly-surface group of length at most $k-1$ (by \cref{normally:quotient} we have that $G/G_1$ is a poly-surface group of length at most $k-1$ and $Q$ is a subgroup of $G/G_1$, now it follows from \cref{normally:surface:groups:closed:under:subgroups}).

\textit{Case 2,} $p(H) \neq 1$: As mentioned before, this case is equivalent to having that for all $n \in \Z \setminus \{0\}$ we have that $p(h^n) \neq 1$. Then, $H \cap (G_1 \cap N_G(H)) = 1$, and we have the following short exact sequence 
$$1\to G_{1}\cap N_G(H) \to W_G(H)\xrightarrow[]{p} Q/p(H)\to 1.$$

Note that $G_{1}\cap N_G(H)$ is, in all cases, either a free group, a poly-$\mathbb Z$
group of Hirsch length at most $2$, or a surface group. Thus, $G_1 \cap N_G(H)$ is geometrically rigid and has $\gdfin$ at most 2. It follows from \cref{lem:rigid-classes} together with
Lemma~\ref{exten:gd:inequality} that

\[
\begin{array}{rcl}
    \underline{\gd}(W_G(H)) & \leq & \underline{\gd}(G_{1} \cap N_{G}(H)) + \underline{\gd}(Q/p(H)) \\
     & \leq & 2 + \underline{\gd}(Q/p(H)) \\
     & \leq & 2+ \underline{\gd}(N_{G/G_1}(p(H))/p(H)) \\
     & \leq & 2 + 2(k-1)-1\\
     & = & 2k-1
\end{array}
\]

\end{proof}

\bibliographystyle{alpha} 
\bibliography{mybib}
\end{document}

%% file: Preamble.tex
\usepackage{graphicx}
\usepackage{pinlabel} 
\usepackage{amsmath}
\usepackage{amsfonts}
\usepackage{amssymb}
\usepackage{mathtools}
\usepackage[all,cmtip]{xy}
\usepackage{amsthm}
\usepackage{tikz-cd}
\usepackage{comment}
\usepackage{enumerate}
\usepackage{url}
\usepackage{epsfig}
\usepackage{hyperref}
\usepackage[utf8]{inputenc}
\usepackage[capitalise]{cleveref}
\usepackage{geometry}
\makeatletter
\providecommand\@dotsep{5}
\def\listtodoname{List of Todos}
\def\listoftodos{\@starttoc{tdo}\listtodoname}
\makeatother

\newtheorem{theorem}{Theorem}[section]
\newtheorem{proposition}[theorem]{Proposition}
\newtheorem{corollary}[theorem]{Corollary}
\newtheorem{lemma}[theorem]{Lemma}

\newtheorem{remark}[theorem]{Remark}

\newcommand{\mycomment}[1]{}

  \theoremstyle{definition}
\newtheorem{definition}[theorem]{Definition}
\newtheorem{example}[theorem]{Example}

\newcommand{\Z}{\mathbb{Z}}

\newcommand{\calF}{{\mathcal F}}

\newcommand{\vcyc}{V\text{\tiny{\textit{CYC}}}}

\newcommand{\nbeq}{\begin{equation}}
\newcommand{\neeq}{\end{equation}}
\newcommand{\beq}{\begin{equation*}}
\newcommand{\eeq}{\end{equation*}}

\DeclareMathOperator{\gd}{gd}
\DeclareMathOperator{\gdfin}{\underline{gd}}
\DeclareMathOperator{\gdvc}{\underline{\underline{gd}}}

\DeclareMathOperator{\mcg}{Mod}

\DeclareMathOperator{\Aut}{Aut}

\DeclareMathOperator{\cat}{CAT}

